\documentclass{amsart}

\usepackage{amssymb, graphicx, amsrefs}
\usepackage{color}
\usepackage{mathtools}
\usepackage{pgfplots}

\copyrightinfo{2024}{Atsushi Inoue, Jun Masamune and Rados{\l}aw K. Wojciechowski}

\newtheorem{theorem}{Theorem}[section]
\newtheorem{lemma}[theorem]{Lemma}
\newtheorem{proposition}[theorem]{Proposition}
\newtheorem{corollary}[theorem]{Corollary}

\theoremstyle{definition}
\newtheorem{definition}[theorem]{Definition}
\newtheorem{example}[theorem]{Example}

\theoremstyle{remark}
\newtheorem{remark}[theorem]{Remark}

\numberwithin{equation}{section}

\newcommand{\as}[1]{\left\langle #1\right\rangle}

\newcommand{\ov}[1]{\overline{ #1}}
\newcommand{\ow}[1]{\widetilde{ #1}}

\newcommand{\ms}[1]{\mathcal{ #1}}

\newcommand{\N}{\mathbb{N}}
\newcommand{\Z}{\mathbb{Z}}
\newcommand{\R}{\mathbb{R}}

\newcommand{\si}{\sigma}

\newcommand{\Deg}{\textup{Deg}}
\newcommand{\lm}{\lambda}
\newcommand{\pt}{\partial}
\newcommand{\De}{\Delta}
\newcommand{\al}{\alpha}
\newcommand{\Q}{\mathcal{Q}}
\newcommand{\D}{\mathcal{D}}
\newcommand{\F}{\mathcal{F}}
\newcommand{\Cn}{\mathcal{C}}
\newcommand{\tX}{\widetilde{X}}
\newcommand{\tb}{\widetilde{b}}
\newcommand{\tm}{\widetilde{m}}
\newcommand{\tDeg}{\widetilde{\textup{Deg}}}

\providecommand{\eat}[1]{}

\newcommand{\Hm}[1]{\leavevmode{\marginpar{\tiny%
$\hbox to 0mm{\hspace*{-0.5mm}$\leftarrow$\hss}%
\vcenter{\vrule depth 0.1mm height 0.1mm width \the\marginparwidth}%
\hbox to 0mm{\hss$\rightarrow$\hspace*{-0.5mm}}$\\\relax\raggedright
#1}}}

\begin{document}

\title[Stability and characterizations]{Essential self-adjointness of the Laplacian on weighted graphs: harmonic functions, stability, characterizations and capacity}

\author{Atsushi Inoue}
\address{A.~Inoue, Department of Mathematics, Faculty of Science, Hokkaido University
Kita 10, Nishi 8, Kita-Ku, Sapporo, Hokkaido, 060-0810, Japan}
\email{nanakorobiyaoki@eis.hokudai.ac.jp}

\author{Sean Ku}
\address{S.~Ku, New York University, Courant Institute of Mathematical 
Sciences, 
251 Mercer St, New York, NY 10012}
\email{sk8980@nyu.edu}

\author{Jun Masamune}
\address{J.~Masamune, Mathematical Institute, Tohoku University,
6-3, Aramaki Aza-Aoba, Aoba-ku, Sendai 980-8578,
Graduate School of Science, Tohoku University, Japan}
\email{jun.masamune.c3@tohoku.ac.jp}

\author{Rados{\l}aw K. Wojciechowski}
\address{R.~K.~Wojciechowski, Graduate Center of the City University of New York, 365 Fifth Avenue, New York, NY, 10016.}
\address{York College of the City University of New York, 94-20 Guy R. Brewer Blvd., Jamaica, NY 11451.}
\email{rwojciechowski@gc.cuny.edu}


\date{\today}
\thanks{A.~I.~is supported by the Quint Corporation.}
\thanks{S.~K.~is supported by the Queens Experiences in 
Discrete Mathematics (QED) REU program funded by the 
National Science Foundation, Award Number DMS 2150251.}
\thanks{J.~M.~is supported in part by 
JSPS KAKENHI Grant Number 23H03798 and LUPICIA CO., LTD}
\thanks{R.~W.~is supported by PSC-CUNY Awards, jointly funded by the Professional Staff Congress and the City University of New York, and 
the Travel Support for Mathematicians program, funded by the Simons Foundation.}

\begin{abstract}
We give two characterizations for the essential self-adjointness of the weighted Laplacian
on birth-death chains. The first involves the edge weights and vertex measure and is classically known; however,
we give another proof using stability results, limit point-limit circle theory and the connection
between essential self-adjointness and harmonic functions. 
The second characterization involves a new notion of capacity.
Furthermore, we also analyze the essential self-adjointness of Schr{\"o}dinger operators,
use the characterizations for birth-death chains 
and stability results to characterize essential self-adjointness for star-like graphs,
and give some connections to the $\ell^2$-Liouville property.
\end{abstract} 

\maketitle
\tableofcontents

\section{Introduction}
The study of the essential self-adjointness of a variety of operators on weighted graphs is an active area
of research. This includes the case of adjacency operators \cite{Gol10, MO84, Mul87}, weighted Laplacians 
\cite{BG15, Gol14, HKLW12, HMW21, HKMW13, Jor08, JP11, KL12, Mas09, Tor10, Web10, Woj08},
magnetic and Schr{\"o}dinger operators \cite{CTT11, CTT11b, Dod06, GKS16, GS11, Mil11, Mil12, Mil13, MT14, Sch20},
operators on vector bundles \cite{LSW21, GMT14, MT15}, higher dimensional chains \cite{AACT23, AT15, BBJ20, BGJ19, Che18},
and quantum graphs \cite{BK13, Car98, Hae17, EKMN18, KN21, KMN22, KMNb22, KN23a, KN23b}.
We also mention the vast literature on Jacobi matrices where the question of essential self-adjointness appears and
which can be applied to graphs over the integers, see the textbook \cite{Tes00}. 

The large body of literature mentioned above does not seem to yield a characterization for the essential self-adjointness of the Laplacian
in the case of birth-death chains,
i.e., graphs over the natural numbers where only subsequent numbers are connected by weighted edges.
This characterization is classically known by a result of Hamburger \cite{Ham20a, Ham20b} 
due to the connection between the moment problem and essential self-adjointness; see \cite{Ak65} for the moment
problem and \cite{EK18} for the connection between the moment problem and essential self-adjointness. 

One of the aims of this note is to give an elementary proof of this result
while introducing techniques that can be applied more generally.
More specifically, we use the stability 
of essential self-adjointness, limit point-limit circle theory
and a careful analysis of harmonic functions on a birth-death chain doubled
to recapture this characterization.
We also place this result in the context of characterizations for other properties, 
e.g., those for recurrence, Markov uniqueness and the Feller property.

\eat{
In order to give this elementary proof of the Hamburger result
we need three general ingredients.
One ingredient is a stability result.
Roughly speaking, this states that essential self-adjointness is a property on the ends of a graph. In particular,
if two subgraphs are not connected too strongly, then essential self-adjointness of the Laplacian on the entire graph
is equivalent to the essential self-adjointness of the Laplacian on both of the subgraphs. On an abstract level,
this follows directly from perturbation theory via the Kato--Rellich theorem, see \cite{RS75}. Our aim here is to give some
specific geometric conditions on graphs that allow for such a decomposition in the spirit of~\cite{GS11,CTT11}.

The second ingredient for the proof
is the relationship between essential self-adjointness and harmonic functions.
More specifically, for positive operators, there is a connection between essential self-adjointness and the triviality
of $\lambda$-harmonic functions for $\lambda<0$ which are square summable, see \cite{RS75} for general theory 
and \cite{KL12, HKLW12, KLW21} for operators on graphs. 
However, in certain situations one can reduce
this to the study of harmonic functions, i.e., the case $\lambda=0$ \cite{GM11, HMW21}. In our setting,
this makes the analysis of such solutions significantly easier to grasp. 
In particular, we can give some conditions
for harmonic functions to be square summable which is used in the characterization.

The third and final ingredient for the proof is limit point-limit circle theory. This theory discusses essential self-adjointness
for Schr{\"o}dinger operators over the integers. In particular, to establish essential self-adjointness for the Laplacian
on such a graph, it suffices to find a harmonic function which is not square summable at each end, see \cite{Tes00} for more details.

With these three ingredients we proceed as follows:
By the stability result, the essential self-adjointness of the Laplacian on a birth-death chain is equivalent to the 
essential self-adjointness of the Laplacian on a birth-death chain doubled to give a graph over the integers
where the two ends are identical. For birth-death chains, we can characterize square summability of
the positive increasing functions which are harmonic at all but one vertex and then extend
them to be harmonic everywhere on the birth-death chain doubled.
We then use the connection between harmonic functions and essential self-adjointness to prove the characterization
of the essential self-adjointness of the Laplacian on the initial birth-death chain.
We note that without the initial doubling procedure and passage to the weighted integers outlined above, 
this approach would not work as all harmonic functions on birth-death chains are constant.}

We then apply this characterization in two ways. The first involves a new notion of capacity which has an additional term involving the norm of the Laplacian and is 
applied to a 
point at infinity. It turns out that,
on birth-death chains, the Laplacian is essentially self-adjoint if and only if this capacity is either zero
or infinite. For related results on manifolds and metric measure spaces see \cite{HKM17, HMS23}; 
however, in these references the capacity is that of a set
removed from the inside of the manifold or metric measure space.
For some connections between another capacity that does not involve
the Laplacian term and Markov
uniqueness see \cite{HKMW13, Mas99, Mas05}.
However, to the best of our knowledge, this is the first time that this
notion of capacity has appeared in the context of graphs and also the
first connection between essential self-adjointness and capacity.

The second application involves so-called star-like graphs, see \cite{CTT11}. These are graphs which, following
the removal of a set, consists of a disjoint union of birth-death chains.
In \cite{CTT11} the removed set is finite.
Here, we significantly generalize this notion to allow the removal
of infinite sets and
use the stability results and the characterizations for birth-death chains
to characterize the essential self-adjointness of the Laplacian on such graphs in terms of both
the graph structure and capacity.

Along the way we have a closer look at conditions needed for the stability
of essential self-adjointness as well as the case of Schr{\"o}dinger operators.
In particular,
we show that a Laplacian on a general graph can be made essentially self-adjoint
by adding a potential or by changing the geometry of the graph.

Finally, we discuss further connections between 
essential self-adjointness
and the $\ell^2$-Liouville property for star-like graphs.
In particular, for a star with two rays, i.e., a graph over the integers
where only subsequent integers are connected by edges,
we give a full characterization of the $\ell^2$-Liouville property.
We achieve this characterization by analyzing the possible end behavior of
all harmonic functions and breaking the problem down into transient
and recurrent cases. 

The paper is organized as follows: In Section~\ref{s:setting} we introduce the setting and recall some general
background results. In Section~\ref{s:stability} we discuss the stability results that we will need 
for the characterizations.
In Section~\ref{s:characterizations} we first prove the characterizations for birth-death chains
in terms of the graph structure and in terms of capacity.
We then extend these characterizations to the case of star-like graphs
and discuss some further connections to Liouville properties.


\section{Setting and background results}\label{s:setting}
In this section we introduce the general setting of weighted graphs and Laplacians. 
For a more thorough introduction and proofs of all mentioned results see \cite{KL12, KLW21}.
We then recall some general abstract conditions for establishing essential self-adjointness. 

\subsection{Weighted graphs}
We start by briefly describing our general setting before we focus on special 
cases of interest.
We let $X$ denote a countably infinite set whose elements we refer to as \emph{vertices}. We let
$b\colon X \times X \longrightarrow [0,\infty)$ denote a symmetric function which is 0 on the diagonal and which satisfies
$\sum_{y \in X} b(x,y)<\infty$ for every $x \in X$. If $b(x,y)>0$, then we think of the vertices $x$ and $y$ as being
\emph{connected} by an edge with weight $b(x,y)$, write $x \sim y$, 
and refer to the vertices as \emph{neighbors}.
We refer to the function $b$ as the \emph{edge weight} as it gives the edge 
structure of the graph. 
We call any sequence of vertices $(x_k)$ such that $x_k \sim x_{k+1}$ 
a \emph{path} from the first vertex to the last vertex in the sequence. 
If there is a path connecting any two vertices in the graph,
then we call the graph \emph{connected}.

Furthermore, we let 
$m \colon X \longrightarrow (0,\infty)$
denote a strictly positive \emph{vertex measure} which we extend to all subsets of $X$ by additivity. We refer
to the triple $(X, b, m)$ as a \emph{weighted graph}.
We say that $(X,b,m)$ is an \emph{induced subgraph} of $(\tX,\tb,\tm)$, equivalently,
that $(\tX,\tb,\tm)$ is an \emph{induced supergraph} of $(X,b,m)$,
if $X \subseteq \tX$, $\tb \vert_{X \times X}=b$ and $\tm\vert_{X}=m$.

Throughout the paper, 
we will make an additional assumption on the relationship between the edge weight and the vertex measure, namely, we assume that
\begin{equation*}\label{A}
\sum_{y \in X} \frac{b^2(x,y)}{m(y)}<\infty \qquad \textup{for every } x \in X.
\tag{A}
\end{equation*}
As we will explain below, this condition is needed in order for the formal Laplacian to map
the finitely supported functions to those which are square summable with respect to $m$.
It is trivially satisfied if the graph is \emph{locally finite}, that is, 
if every vertex has finitely many neighbors. More generally, it is also satisfied if $\inf_{y \sim x} m(y) >0$
for all $x \in X$.

Perhaps the easiest classes of graphs consists of birth-death chains
and stars with two rays. In this case, the vertex set is either the natural numbers 
or the integers
and subsequent numbers are the only neighbors.
\begin{definition}[Birth-death chains and stars with two rays]
A connected weighted graph is a \emph{birth-death chain} if
$X \subseteq \N_0=\{0, 1, 2, \ldots\}$ and
$b(x,y)>0$ if and only if $|x-y|=1$.
A weighted graph is a \emph{star with two rays} if $X = \Z=\{\ldots, -2, -1, 0, 1, 2, \ldots\}$ and
$b(x,y)>0$ if and only if $|x-y|=1$.
\end{definition}
We note that
as each vertex has either one or two neighbors, it is clear that birth-death
chains and stars with two rays satisfy (A).

\subsection{The energy form, Laplacians, restrictions, adjoints and essential self-adjointness}
We next introduce the energy form and corresponding 
Laplacian. We let $C(X) = \{ f \colon X \longrightarrow \R\}$
denote the set of all real-valued functions on $X$, 
$$C_c(X) = \{ f \in C(X) \mid f(x) \neq 0 \textup{ for finitely many } x \in X\}$$
denote the set of finitely supported functions on $X$ and 
$$\ell^2(X,m) = \{ f \in C(X) \mid \sum_{x \in X} f^2(x)m(x)<\infty\}$$
denote the Hilbert space of square summable functions with respect to $m$ on $X$. We define an inner product on $\ell^2(X,m)$
by $\as{f,g} = \sum_{x \in X} f(x)g(x)m(x).$

Next, we define the energy of a function. Namely, for $f \in C(X)$, we let
$$\Q(f) = \frac{1}{2}\sum_{x,y \in X} b(x,y) (f(x)-f(y))^2 $$
denote the \emph{energy} of $f$. We then define
$$\D = \{ f \in C(X) \mid \Q(f) < \infty\}$$
as the set of \emph{functions of finite energy}. For $f, g \in \D$, we let
$$\Q(f,g) = \frac{1}{2} \sum_{x,y \in X} b(x,y)(f(x)-f(y))(g(x)-g(y)).$$

In order to introduce a formal Laplacian, we let 
$$\ms{F} = \{ f \in C(X) \mid \sum_{y \in X} b(x,y) |f(y)| <\infty \textup{ for every } x \in X \}$$
and for $f \in \ms{F}$ and $x \in X$, we let
$$\De f(x) = \frac{1}{m(x)} \sum_{y \in X} b(x,y)(f(x)-f(y)).$$
A direct calculation gives that assumption (A), i.e., that $\sum_{y \in X} b^2(x,y)/m(y)<\infty$ for every $x \in X$
is equivalent to
$\De(C_c(X)) \subseteq \ell^2(X,m)$, that is, that the formal Laplacian maps
finitely supported functions to square summable functions. Thus, (A) implies
that
$$L_c = \De \vert_{C_c(X)},$$
the restriction of the formal Laplacian to the finitely supported functions, gives an operator on $\ell^2(X,m)$. Furthermore, assumption (A)
is equivalent to the fact that $\ell^2(X,m) \subseteq \F$.

A direct calculation then yields the following Green's formula
$$\Q(\varphi, \phi)= \as{L_c \varphi, \phi} = \as{\varphi, L_c \phi}$$
for all $\varphi, \phi \in C_c(X)$ so that $L_c$ is a symmetric positive operator.
We say that $L_c$ is \emph{essentially self-adjoint} whenever $L_c$ has a unique self-adjoint extension.
By general theory this is equivalent to $\ov{L}_c$, the closure of $L_c$, being self-adjoint, see \cite{RS80, RS75, Wei80}
for general background results on essential self-adjointness.

For a vertex $x \in X$, we let
$$\Deg(x) = \frac{1}{m(x)} \sum_{y \in X} b(x,y)$$
denote the \emph{weighted vertex degree} of $x$. In particular, $\Deg$ is a bounded
function on $X$ if and only if $L_c$ is a bounded operator on $\ell^2(X,m)$. 

We denote the adjoint of $L_c$ by $L_c^*$. By general theory, it can be shown that $L_c^*$ is a restriction of $\De$
and has domain
$$D(L_c^*) = \{ f \in \ell^2(X,m) \mid \De f \in \ell^2(X,m) \}.$$
In the next few subsections, we will discuss some criteria for the essential self-adjointness of $L_c$ via properties
of the adjoint $L_c^*$.

\subsection{$\lambda$-harmonic functions}
The first criterion for essential self-adjointness involves functions which are in the kernel of $L_c^*$. 
Let $\lm \in \R$. We say that $f \in \ms{F}$ is $\lm$-\emph{harmonic} if
$$\De f=\lm f.$$ 
When $\lm=0$, i.e., when $\De f=0$, we say that $f$ is \emph{harmonic}.

By general theory, the essential self-adjointness of $L_c$ is equivalent to the triviality of $\lm$-harmonic
functions which are in $\ell^2(X,m)$ for $\lm<0$. 
In other words, $L_c$ is essentially self-adjoint if and only if
$$\ker(L_c^*-\lm) =\{0\}$$
for one (all) $\lm<0$, see \cite{KL12, HKLW12, RS75} for details. We further note that the 
essential self-adjointness of $L_c$ implies
that harmonic functions which are in $\ell^2(X,m)$ are constant, that is, that the 
\emph{$\ell^2$-Liouville property} holds. In other words, if 
$\ker(L_c^*-\lm) =\{0\}$, then
$\ker(L_c^*)$ consists of only constant functions, see \cite{HMW21}. 
Equivalently, if there exists a non-constant harmonic function in $\ell^2(X,m)$, then $L_c$
is not essentially self-adjoint. We will use this fact later.

\subsection{Limit point-limit circle theory}
We next discuss some aspects of limit point-limit circle theory. 
For this, we focus on stars with two rays, i.e., 
$X=\Z=\{\ldots, -2, -1, 0, 1, 2, \ldots\}$ and 
$b(x,y)>0$ if and only if $|x-y|=1$.

We say that a function $f$ on $\Z$ is \emph{in $\ell^2$ at $\infty$}, respectively 
\emph{at $-\infty$}, if
$$\sum_{r=0}^\infty f^2(r)m(r)<\infty, \qquad \textup{ resp. } \sum_{r=0}^\infty f^2(-r)m(-r)<\infty.$$
We write $f \in \ell^2(\infty)$, respectively
$f \in \ell^2(-\infty)$, in this case. 
Otherwise, we write $f \not \in \ell^2(\infty)$, respectively 
$f \not \in \ell^2(-\infty)$. 

For stars with two rays,
it turns out that if for one $\lm \in \R$ there exists a $\lm$-harmonic function 
$f \not \in \ell^2(\infty)$ (resp.~$f \not \in \ell^2(-\infty)$), 
then for all $\lm \in \R$ there exists a $\lm$-harmonic function 
$f \not \in \ell^2(\infty)$ (resp.~$f \not \in \ell^2(-\infty)$). 
In this case, we say that $L_c$ is in 
the \emph{limit point case at $\infty$}, respectively \emph{at $-\infty$}. Otherwise, $L_c$ is said to be in the \emph{limit circle
case at $\infty$}, respectively \emph{at $-\infty$}.

Now, a main result in this setting is that $L_c$ is essentially self-adjoint if and only if
$L_c$ is in the limit point case at both $\infty$ and $-\infty$. In particular, to establish the essential 
self-adjointness of $L_c$, it suffices to find a harmonic function which is not in $\ell^2$ at $\infty$
and a harmonic function which is not in $\ell^2$ at $-\infty$. For further details and proofs 
see \cite{Tes00}. 

\subsection{Symmetry of the adjoint}
A final general criterion for essential self-adjointness is that $L_c^*$ is symmetric. To state this we introduce
some basic notation that we will use further below.
For $f, g \in D(L_c^*)$ we write
$$[f, g]_{L_c^*}=\as{L_c^*f, g}-\as{f, L_c^*g}.$$
We now state a basic fact which follows easily from general theory,
see e.g.~\cite{RS75, RS80, Wei80}.
For the convenience of the reader, we give a short proof.
\begin{lemma}\label{l:basic fact}
Let $(X,b,m)$ be a weighted graph satisfying (\ref{A}).
Then, the following statements are equivalent:
\begin{enumerate}
\item[(i)] $L_c$ is essentially self-adjoint.
\item[(ii)] $L_c^*$ is symmetric.
\item[(iii)] $[f, g]_{L_c^*}=0$ for all $f, g \in D(L_c^*)$.
\end{enumerate}
\end{lemma}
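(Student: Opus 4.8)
The plan is to establish the cycle of implications (i)$\Rightarrow$(ii)$\Rightarrow$(iii)$\Rightarrow$(i), relying on the standard deficiency-index criterion together with the explicit description of $D(L_c^*)$ recalled above. First I would prove (i)$\Rightarrow$(ii): if $L_c$ is essentially self-adjoint, then $\ov{L}_c$ is self-adjoint and, since $L_c \subseteq \ov{L}_c$ implies $\ov{L}_c = (\ov{L}_c)^* \subseteq L_c^*$, while also $L_c^* = (\ov{L}_c)^*= \ov{L}_c$, we get $L_c^* = \ov{L}_c$, which is self-adjoint and in particular symmetric. The implication (ii)$\Rightarrow$(iii) is immediate from the definition: symmetry of $L_c^*$ means exactly that $\as{L_c^* f, g} = \as{f, L_c^* g}$ for all $f,g \in D(L_c^*)$, i.e., that $[f,g]_{L_c^*} = 0$ for all such $f,g$.

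The substantive step is (iii)$\Rightarrow$(i), and this is where the only real content lies. Suppose $[f,g]_{L_c^*}=0$ for all $f,g \in D(L_c^*)$, i.e., $L_c^*$ is symmetric. I would then argue that $L_c^* = \ov{L}_c$. Since $L_c$ is symmetric, $\ov{L}_c \subseteq L_c^{**} = L_c^*$ always holds. For the reverse inclusion, use that $L_c^*$ symmetric gives $L_c^* \subseteq (L_c^*)^* = \ov{L}_c$ (here $(L_c^*)^* = \ov{L}_c$ because $L_c$ is densely defined, as $C_c(X)$ is dense in $\ell^2(X,m)$). Hence $L_c^* = \ov{L}_c$, so $\ov{L}_c$ is its own adjoint, i.e., self-adjoint, which is precisely essential self-adjointness of $L_c$. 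I should double-check that $L_c$ is closable so that $\ov{L}_c$ makes sense — this follows since $L_c$ is symmetric and densely defined, hence has a densely defined adjoint $L_c^*$, so $L_c$ is closable with $\ov{L}_c = L_c^{**}$.

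I do not expect any serious obstacle here; the whole statement is a repackaging of the abstract fact that a closed symmetric operator is self-adjoint iff it equals its adjoint, combined with the closability of densely defined symmetric operators. The only points requiring a word of care are (a) the density of $C_c(X)$ in $\ell^2(X,m)$, which is standard and justifies that $L_c$ is densely defined and that $L_c^{**} = \ov{L}_c$; and (b) keeping the adjoint/closure identities straight, for which I would simply cite the general operator theory in \cite{RS75, RS80, Wei80}. The assumption (\ref{A}) enters only to guarantee, as recalled above, that $L_c$ is a genuine operator on $\ell^2(X,m)$ (i.e., $\De(C_c(X)) \subseteq \ell^2(X,m)$) and that $D(L_c^*)$ has the stated form; beyond that, the argument is purely functional-analytic.
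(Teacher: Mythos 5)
Your proposal is correct and follows essentially the same route as the paper: (ii)$\Leftrightarrow$(iii) by definition, and the two remaining implications via the standard identities $L_c^{**}=\ov{L}_c$ and $\ov{L}_c\subseteq L_c^*\subseteq L_c^{**}$ for a densely defined symmetric operator. The extra care you take with closability and the density of $C_c(X)$ is fine but not a departure from the paper's argument.
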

\begin{proof}
It is clear that (ii) is equivalent to (iii) by definition. To see
that (i) implies (ii) we note that essential self-adjointness of $L_c$
is equivalent to the self-adjointness of $\ov{L}_c=L_c^{**}$ 
and that $L_c^*$ is always a closed operator. Hence, $L_c \subseteq L_c^*= \ov{L_c^*} = \ov{L}_c^*=\ov{L}_c=L_c^{**}$
and thus $L_c^*$ is symmetric.
Now, assume that (ii) holds. 
Then we obtain $\ov{L}_c \subseteq L_c^* \subseteq L_c^{**}=\ov{L}_c$, which implies that $\ov{L}_c$ is self-adjoint and thus $L_c$ is 
essentially self-adjoint.
\end{proof}


\section{Stability}\label{s:stability}
We now discuss stability results. In particular, in order to establish the essential self-adjointness
of the Laplacian on an entire graph, it suffices to decompose the graph into two subgraphs which are not too 
strongly connected and then analyze the essential self-adjointness of the Laplacian on each of the subgraphs. 

Such results follow directly from the Kato--Rellich theorem. 
However, our focus is on giving
some geometric conditions on the underlying graph to ensure that such
a decomposition is possible.
For the case when one of the subgraphs is finite see \cite{CTT11}. Here, we rather replace the finiteness assumption
with some bounded degree assumptions in the spirit of stability results for stochastic completeness found in \cite{Hua11, KL12}.
We will later show the necessity of such assumptions.

\subsection{Decomposing the graph}
We start with the general notion of a vertex boundary and interior of a subset. 
For $K \subseteq X$, let
$$\pt K = \{ x \in K \mid \textup{ there exists } y \not \in K \textup{ such that } b(x,y)>0 \}$$
denote the \emph{vertex boundary} of the subset and let
$\mathring{K} = K \setminus \pt K$ denote the \emph{interior} of $K$.

We let $X_1 \subseteq X$, $X_2 = X \setminus X_1$ 
and $X_3 = \pt X_1 \cup \pt X_2.$
With these three pieces in mind, we now construct three subgraphs of our original graph.
For the first two subgraphs
we take the subgraphs induced by $X_1$ and $X_2$.
Specifically, we let $b_k \colon X_k \times X_k \longrightarrow [0,\infty)$
and $m_k \colon X_k \longrightarrow (0,\infty)$ denote the restrictions of $b$ to $X_k \times X_k$ 
and $m$ to $X_k$ for $k=1, 2$. This gives the induced subgraphs $(X_1, b_1, m_1)$ and $(X_2, b_2, m_2)$.

For $X_3$, we do not restrict $b$ but rather only consider the edges that connect 
$X_1$ and $X_2$. That is, we define $b_\partial$ on $X_3 \times X_3$ via
$$b_\partial(x, y)=
\begin{cases}
b(x, y) \quad & \textup{if } x \in \partial X_1, y \in X_2 \\
b(x, y) \quad & \textup{if } x \in \partial X_2, y \in X_1 \\
0 \quad &\text{otherwise}.
\end{cases}
$$
For the vertex measure on $X_3$ we restrict $m$ to obtain $m_3$. This gives the subgraph
$(X_3, b_\partial, m_3)$.
We note that all edges in the original graph fall under the purview of exactly one of the 
edge weight functions. In other words, if we extend $b_1, b_2$ and $b_\partial$ to be zero outside of
their domains of definition, then we get the equality
$b=b_1+b_2+b_\partial$ 
on $X \times X$. 

In line with the weighted vertex degree introduced previously, we will denote the weighted boundary vertex degree function as
$$\Deg_\partial(x) = \frac{1}{m(x)} \sum_{y \in X_3}b_\partial(x,y)$$
for $x \in X_3$. We note that letting
$\Deg_{\pt X_k}(x) = 1/m(x) \sum_{y \not \in X_k}b(x,y)$ for $x \in X_k$ with $k=1, 2$ and extending by 0
we get that $\Deg_\partial = \Deg_{\pt X_1}+\Deg_{\pt X_2}$.

\subsection{Decomposing the Laplacian}
Given the decomposition of $b$ presented above, we now give a corresponding decomposition
for the Laplacian. More specifically, we let $\De_k$ denote the formal Laplacian of the induced
subgraphs $(X_k,b_k,m_k)$ for $k=1,2$ and let $\De_\partial$ denote the formal Laplacian 
of the graph $(X_3, b_\partial, m_3)$. We then
let $L_{c,k}$ denote the restriction of $\De_k$ to $C_c(X_k)$ for $k=1,2$ and $L_{c,\partial}$
the restriction of $\De_\partial$ to $C_c(X_3)$. 
In particular, we note that as $b_1$ will not give any connections to vertices in $X_2$ and $b_2$ will not give
any connections to vertices in $X_1$,
the Laplacians $L_{c,1}$ and $L_{c,2}$ can be thought of as having Neumann boundary conditions on $\pt X_1$
and $\pt X_2$, respectively. 

We note that the Laplacians $\De_k$ and $\De_\partial$ are only defined on appropriate subspaces of $C(X_k)$.
In the formula below, we wish to apply these to functions which are defined on $X$. Thus, whenever
we restrict a function $f \in C(X)$ to $X_k$ we write $f_k$, i.e., 
$f_k = f \vert_{X_k}$. Furthermore, we note that $\De_k f_k$
is defined only on $X_k$ for $k=1,2$ and $\De_\partial f_3$ is defined on $X_3$;
however, we extend by $0$ for vertices that are outside of $X_k$. 
In particular, we can think
of $\De_k$ and $\De_\partial$ as being defined on $\ms{F}$, the domain of the formal Laplacian $\De$. 
With these preparations, we then get the following general decomposition result.
\begin{lemma}\label{l:decomposition}
For $f \in \ms{F}$ and $x \in X$ we have
$$\De f (x)= \De_1f_1(x)+\De_2f_2(x)+\De_\partial f_3(x).$$
\end{lemma}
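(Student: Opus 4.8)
The plan is to verify the identity pointwise in $x\in X$ by splitting into the cases $x\in \mr{X}_1$, $x\in\mr{X}_2$, and $x\in X_3=\pt X_1\cup\pt X_2$, and in each case simply unwinding the definitions of $\De$, $\De_k$, $\De_\pt$ and the decomposition $b=b_1+b_2+b_\pt$. Since $b(x,y)=b_1(x,y)+b_2(x,y)+b_\pt(x,y)$ holds on all of $X\times X$ (with the convention that each summand is $0$ outside its domain), for any $f\in\ms{F}$ and any fixed $x$ we have, formally,
$$\De f(x)=\frac{1}{m(x)}\sum_{y\in X} b(x,y)(f(x)-f(y))=\frac{1}{m(x)}\sum_{y\in X}\bigl(b_1+b_2+b_\pt\bigr)(x,y)(f(x)-f(y)),$$
and the only subtlety is checking that each of the three resulting sums equals the corresponding term $\De_1f_1(x)$, $\De_2f_2(x)$, $\De_\pt f_3(x)$ (extended by $0$ off $X_1$, $X_2$, $X_3$), which requires matching both the summation index set and the value $f(x)-f(y)$ versus $f_k(x)-f_k(y)$.

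First I would treat $x\in\mr{X}_1$. Here $b_2(x,\cdot)\equiv 0$ (no edge from $X_1$ lands in $X_2$ via $b_2$) and $b_\pt(x,\cdot)\equiv 0$ since $x\notin X_3$, so the right-hand side reduces to $\De_1 f_1(x)$; on the left, $b(x,y)>0$ forces $y\in X_1$ (because $x\in\mr{X}_1$ has no neighbors outside $X_1$), hence $f(y)=f_1(y)$ throughout the sum and $\De f(x)=\De_1 f_1(x)$ as well. The case $x\in\mr{X}_2$ is symmetric. The genuinely substantive case is $x\in X_3$, say $x\in\pt X_1$ (the case $x\in\pt X_2$ being symmetric, and noting $\pt X_1\cap\pt X_2=\emptyset$ so there is no overlap to worry about): then the neighbors of $x$ split into those in $X_1$, contributing to $\De_1 f_1(x)$ via $b_1$, and those in $X_2$, contributing to $\De_\pt f_3(x)$ via $b_\pt$ — here one must note that $b_\pt(x,y)=b(x,y)$ precisely when $y\in X_2$, that $m_3(x)=m(x)$, and that $f_3(x)-f_3(y)=f(x)-f(y)$ for these $y\in X_3$ (one should check that all relevant neighbors $y\in X_2$ of $x\in\pt X_1$ actually lie in $\pt X_2\subseteq X_3$, which holds because such a $y$ has $x\notin X_2$ as a neighbor); meanwhile $\De_2 f_2(x)=0$ since $x\notin X_2$. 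Adding the $X_1$-part and the $X_2$-part recovers $\De f(x)$.

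The main obstacle, such as it is, is bookkeeping rather than mathematics: one must be careful that the three edge-weight functions, the three vertex measures, and the three restricted functions are all being evaluated consistently after the "extend by zero" conventions are applied, and in particular that no edge is double-counted and none is dropped. A clean way to organize this is to observe that for every fixed $x$ and every $y$, exactly one of $b_1(x,y),b_2(x,y),b_\pt(x,y)$ can be nonzero, and that whenever $b_k(x,y)\neq 0$ (resp.\ $b_\pt(x,y)\neq 0$) both $x$ and $y$ lie in $X_k$ (resp.\ in $X_3$), so that $f(x)-f(y)=f_k(x)-f_k(y)$ (resp.\ $=f_3(x)-f_3(y)$) and $m(x)=m_k(x)$ (resp.\ $=m_3(x)$) automatically. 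With that observation in hand the identity follows by linearity of the sum, term by term, with the understanding that $\De_k f_k(x)$ and $\De_\pt f_3(x)$ are taken to be $0$ when $x$ lies outside $X_k$ or $X_3$, exactly as stipulated before the statement of the lemma.
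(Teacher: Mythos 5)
Your proposal is correct and follows essentially the same route as the paper's proof: a case split on whether $x$ lies in the interior of $X_1$ or $X_2$ versus the boundary set $X_3$, followed by unwinding the definitions and matching the sums over $y\in X_1$ and $y\in X_2$ with $\De_1 f_1$, $\De_2 f_2$ and $\De_\pt f_3$ under the extend-by-zero conventions. Your added bookkeeping observations (that exactly one of $b_1,b_2,b_\pt$ is nonzero for each pair and that nonvanishing forces both endpoints into the relevant vertex set) are sound and only make explicit what the paper leaves implicit.
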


\begin{proof}
For $x \in \partial X_1$, we calculate
\begin{align*}
m(x)(\De f)(x)
&=\sum_{y \in X_1} b(x, y)(f(x)-f(y))+\sum_{y \in X_2} b(x, y)(f(x)-f(y)) \\
&=\sum_{y \in X_1} b_1(x, y)(f_1(x)-f_1(y))+\sum_{y \in X_2} b_\partial(x, y)(f_3(x)-f_3(y)) \\
&=m(x)\left(\De_1 f_1(x)+\De_\partial f_3(x)\right).
\end{align*}
For $x \in \mathring{X}_1$, we obtain
\begin{align*}
m(x)(\De f)(x)&=\sum_{y \in X_1} b(x, y)(f(x)-f(y))
=m(x)(\De_1 f_1)(x).
\end{align*}
Thus, we find that $\De f=\De_1 f_1+\De_\partial f_3= \De_1 f_1+ \De_2 f_2+ 
\De_\partial f_3 $ on $ X_1$. Similarly, we can calculate
that $\De f=\De_2 f_2+\De_\partial f_3$ on $X_2$.
This completes the proof.
\end{proof}

Next we consider the restrictions of the formal Laplacians to the finitely supported functions.
We recall that $D(L_c^*) = \{ f \in \ell^2(X,m) \mid \De f \in \ell^2(X,m) \}$ and, similarly, 
$$D(L_{c,k}^*) = \{ f \in \ell^2(X_k,m_k) \mid \De_k f \in \ell^2(X_k,m_k) \}$$
for $k=1, 2$ with an analogous statement for $D(L_{c,\partial})$, see \cite{KL12}. 

As above, we will restrict functions defined on $X$ to $X_k$ and then
extend by 0. However, one has to be careful about when the restriction are in the appropriate
domains. Thus we make an additional assumption that will always ensure this.
More specifically, we assume that the restriction of the formal Laplacian $\De_\partial$ to $\ell^2(X_3,m_3)$ is 
a bounded operator. This is equivalent to the fact that $\Deg_\partial$ is bounded on $X_3$, see \cite{KL12, KLW21}.

\begin{lemma}\label{l:stab_key_lemma}
Suppose that $\Deg_\partial$ is bounded on $X_3$. Then, we have:
\begin{itemize}
\item[(a)] If $f \in D(L_c^*)$, then $f_k \in D(L_{c,k}^*)$ for $k=1, 2$.
\item[(b)] If $f, g \in D(L_c^*)$, then
$$[f, g]_{L_c^*}=\sum_{k=1}^2 [f_k, g_k]_{L_{c,k}^*}.$$
\end{itemize}
\end{lemma}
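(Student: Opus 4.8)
The plan is to reduce everything to the pointwise decomposition already established in Lemma~\ref{l:decomposition} and the description of the adjoint domains, using the boundedness of $\Deg_\partial$ to control the boundary term. First I would verify part (a). Let $f \in D(L_c^*)$, so $f \in \ell^2(X,m)$ and $\De f \in \ell^2(X,m)$. Since $m_k$ is just the restriction of $m$, the restriction $f_k = f\vert_{X_k}$ is automatically in $\ell^2(X_k,m_k)$, so the only thing to check is that $\De_k f_k \in \ell^2(X_k,m_k)$. By Lemma~\ref{l:decomposition}, on $X_k$ we have $\De f = \De_k f_k + \De_\partial f_3$ (the third piece among $\De_1,\De_2,\De_\partial$ vanishes on $X_k$ since $b_j$ with $j\neq k$ gives no connections into $X_k$ and $\De_\partial f_3$ is supported on $X_3$). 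Hence $\De_k f_k = (\De f)\vert_{X_k} - (\De_\partial f_3)\vert_{X_k}$. The first term lies in $\ell^2(X_k,m_k)$ since $\De f \in \ell^2(X,m)$. For the second term, the assumption that $\Deg_\partial$ is bounded means $\De_\partial$ is a bounded operator on $\ell^2(X_3,m_3)$, and $f_3 = f\vert_{X_3} \in \ell^2(X_3,m_3)$ since $f \in \ell^2(X,m)$; therefore $\De_\partial f_3 \in \ell^2(X_3,m_3) \subseteq \ell^2(X,m)$ after extending by zero, so its restriction to $X_k$ is square summable. Thus $\De_k f_k \in \ell^2(X_k,m_k)$ and $f_k \in D(L_{c,k}^*)$.

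For part (b), the idea is to expand $[f,g]_{L_c^*} = \as{L_c^* f, g} - \as{f, L_c^* g}$ using $L_c^* f = \De f$ and the splitting of the inner product over $X = X_1 \sqcup X_2$. Since $\as{\cdot,\cdot}$ on $\ell^2(X,m)$ decomposes as the sum of the inner products on $\ell^2(X_1,m_1)$ and $\ell^2(X_2,m_2)$ (as $m$ restricts to $m_k$ and $X$ is the disjoint union), and since by Lemma~\ref{l:decomposition} $\De f\vert_{X_k} = \De_k f_k + (\De_\partial f_3)\vert_{X_k}$ and likewise for $g$, I would substitute these in and collect terms. The ``interior'' pieces assemble into $\sum_{k=1}^2\left(\as{\De_k f_k, g_k}_{\ell^2(X_k,m_k)} - \as{f_k, \De_k g_k}_{\ell^2(X_k,m_k)}\right) = \sum_{k=1}^2 [f_k,g_k]_{L_{c,k}^*}$, which is what we want. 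The remaining ``boundary'' pieces are
\begin{align*}
&\sum_{k=1}^2\left(\as{(\De_\partial f_3)\vert_{X_k}, g_k}_{\ell^2(X_k,m_k)} - \as{f_k, (\De_\partial g_3)\vert_{X_k}}_{\ell^2(X_k,m_k)}\right)\\
&\qquad = \as{\De_\partial f_3, g_3}_{\ell^2(X_3,m_3)} - \as{f_3, \De_\partial g_3}_{\ell^2(X_3,m_3)},
\end{align*}
where I used that $\De_\partial f_3$ and the relevant functions are supported on $X_3$ and that $X_3 = \pt X_1 \cup \pt X_2 \subseteq X_1 \cup X_2$. This last expression is $[f_3, g_3]_{L_{c,\partial}^*}$, but since $\De_\partial$ is a \emph{bounded} self-adjoint operator on $\ell^2(X_3,m_3)$ — it is symmetric on $C_c$ by Green's formula and bounded, hence self-adjoint — this boundary term is zero. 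So the boundary contributions cancel and we are left with exactly $\sum_{k=1}^2 [f_k,g_k]_{L_{c,k}^*}$.

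The main obstacle I anticipate is bookkeeping rather than conceptual: one has to be careful that all the manipulations of infinite sums are legitimate, i.e., that every function being paired is genuinely in the right $\ell^2$ space so that the inner products converge absolutely and can be rearranged and split across $X_1$ and $X_2$. This is precisely where the hypothesis $\Deg_\partial$ bounded is doing its work — it guarantees $\De_\partial f_3, \De_\partial g_3 \in \ell^2$ so that the boundary inner products make sense and the regrouping of $\sum_{k}\as{(\De_\partial f_3)\vert_{X_k}, g_k}$ into $\as{\De_\partial f_3, g_3}_{\ell^2(X_3,m_3)}$ is valid. I would also need to double-check the claim that a bounded symmetric operator is self-adjoint (so that $[f_3,g_3]_{L_{c,\partial}^*}=0$), which is standard, and that extending functions by zero outside $X_k$ or $X_3$ commutes appropriately with taking inner products, which is immediate from the disjointness of the decomposition and the fact that the measures and edge weights are genuine restrictions.
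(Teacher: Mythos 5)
Your proposal is correct and follows essentially the same route as the paper: part (a) via the identity $\De_k f_k = \De f - \De_\partial f_3$ on $X_k$ from Lemma~\ref{l:decomposition} together with the boundedness of $\De_\partial$, and part (b) by splitting the inner products over $X_1 \sqcup X_2$ and observing that the boundary commutator $[f_3,g_3]_{L_{c,\partial}^*}$ vanishes because $\De_\partial$ is a bounded (hence self-adjoint) symmetric operator on $\ell^2(X_3,m_3)$. The paper's proof is terser but identical in substance; your extra care about absolute convergence and regrouping of sums is the detail the paper leaves implicit.
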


\begin{proof}
(a): We shall carry out the proof for $k=1$. 
It is clear that $f_1 \in \ell^2(X_1, m_1)$.
By Lemma~\ref{l:decomposition}, we obtain 
$$\De_1 f_1=\De f-\De_\partial f_3$$ 
on $X_1$ so that
$\De_1 f_1 \in \ell^2(X_1, m_1)$ as $\De_\partial$ is bounded on $\ell^2(X_3, m_3)$ 
and $\De f \in \ell^2(X,m)$ since $f \in D(L_c^*)$.
Thus, $f_1 \in D(L_{c,1}^*)$. The same proof works for $k=2$.

(b): By Lemma~\ref{l:decomposition}, for $f, g \in D(L_c^*)$ we obtain
$$ \as{\De f, g} = \sum_{k=1}^2\as{\De_k f_k, g} + \as{\De_\partial f_3, g}  $$
which clearly implies $[f, g]_{L_c^*}=\sum_{k=1}^2[f, g]_{L_{c,k}^*}+ [f, g]_{L_{c,\partial}^*} $ as all adjoints are restrictions
of the corresponding formal Laplacian.
Furthermore, as $\De_\partial$ gives a bounded operator on $\ell^2(X_3,m_3)$, we obtain 
$[f_3, g_3]_{L_{c,\partial}^*}=0$.
This completes the proof.
\end{proof}

\subsection{Basic stability result}
We now have all of the ingredients to prove the following stability result.
\begin{theorem}\label{t:stability}
Let $(X, b, m)$ be a weighted graph satisfying (\ref{A}).
If $X_1 \subseteq X, X_2 =X \setminus X_1$ and $X_3 = \pt X_1 \cup \pt X_2$
are such that $\Deg_\partial$ is a bounded function on $X_3$, then
$$L_c \textup{ is essentially self-adjoint} \iff  L_{c,1} \textup{ and } L_{c,2} \textup{ are essentially self-adjoint.}$$
\end{theorem}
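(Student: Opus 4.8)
The plan is to deduce this from the criterion in Lemma~\ref{l:basic fact}, namely that essential self-adjointness of $L_c$ is equivalent to $[f,g]_{L_c^*}=0$ for all $f,g\in D(L_c^*)$, together with the decomposition of the bracket furnished by Lemma~\ref{l:stab_key_lemma}. The boundedness of $\Deg_\partial$ on $X_3$ is exactly the hypothesis needed to invoke that lemma, so both directions will rest on the identity $[f,g]_{L_c^*}=\sum_{k=1}^2[f_k,g_k]_{L_{c,k}^*}$ for $f,g\in D(L_c^*)$.

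For the direction ``$\Leftarrow$'', suppose $L_{c,1}$ and $L_{c,2}$ are essentially self-adjoint. Given $f,g\in D(L_c^*)$, Lemma~\ref{l:stab_key_lemma}(a) gives $f_k,g_k\in D(L_{c,k}^*)$ for $k=1,2$, and essential self-adjointness of each $L_{c,k}$ gives $[f_k,g_k]_{L_{c,k}^*}=0$ by Lemma~\ref{l:basic fact}. Summing and applying Lemma~\ref{l:stab_key_lemma}(b) yields $[f,g]_{L_c^*}=0$, so $L_c^*$ is symmetric and $L_c$ is essentially self-adjoint.

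For the direction ``$\Rightarrow$'', suppose $L_c$ is essentially self-adjoint; I want to show each $L_{c,k}$ is. Fix $k\in\{1,2\}$ and take $u,v\in D(L_{c,k}^*)$; the goal is $[u,v]_{L_{c,k}^*}=0$. The natural move is to extend $u$ and $v$ by zero to functions on $X$ and check that the extensions lie in $D(L_c^*)$ and that the bracket is unchanged. The issue is that $\De$ applied to a zero-extended function need not vanish on $\partial X_k$, but Lemma~\ref{l:decomposition} shows that on $X$ one has $\De u = \De_k u_k + \De_\partial u_3$ (with the other piece vanishing), and $\De_\partial$ is a bounded operator on $\ell^2(X_3,m_3)$ by hypothesis; hence $\De u\in\ell^2(X,m)$ and the zero-extension lies in $D(L_c^*)$. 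Then Lemma~\ref{l:stab_key_lemma}(b) applied to these extensions gives $[u,v]_{L_c^*}=[u_k,v_k]_{L_{c,k}^*}$ (the other summand vanishes since the zero-extension restricts to zero on $X_j$, $j\neq k$), and the left side is zero because $L_c$ is essentially self-adjoint. Therefore $[u,v]_{L_{c,k}^*}=0$ for all $u,v\in D(L_{c,k}^*)$, so $L_{c,k}^*$ is symmetric and $L_{c,k}$ is essentially self-adjoint.

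The main obstacle is the bookkeeping in the ``$\Rightarrow$'' direction: one must verify carefully that the zero-extension of an element of $D(L_{c,k}^*)$ genuinely lands in $D(L_c^*)$ — this is where the boundedness of $\Deg_\partial$ is essential, since without it $\De_\partial u_3$ need not be square summable and the extension could fail to be in the domain of the adjoint. Once that membership is established, the bracket identity from Lemma~\ref{l:stab_key_lemma}(b) does the rest mechanically. A secondary point worth stating explicitly is that restriction-then-zero-extension is, on the relevant pieces, inverse to the decomposition $f\mapsto(f_1,f_2)$, so that the two summations in (b) match up correctly; this is routine but should be spelled out to avoid sign or support errors.
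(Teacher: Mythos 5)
Your proposal is correct and follows essentially the same route as the paper: both directions rest on Lemma~\ref{l:basic fact} and the bracket decomposition of Lemma~\ref{l:stab_key_lemma}, and your forward direction is simply the direct (rather than contrapositive) phrasing of the paper's zero-extension argument, with the same verification that the extension lies in $D(L_c^*)$ via the boundedness of $\De_\partial$.
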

\begin{proof}
$\Longleftarrow$: Assume that $L_{c,1}$ and $L_{c,2}$ are essentially self-adjoint. 
Let $f, g \in D(L_c^*)$. By (a) in Lemma~\ref{l:stab_key_lemma}, 
$f_k, g_k \in D(L_{c,k}^*)$ and $[f_k, g_k]_{L_{c,k}^*}=0$ for $k=1, 2$ by Lemma~\ref{l:basic fact} as we assume essential self-adjointness. 
Thus, from (b) in Lemma~\ref{l:stab_key_lemma}, we obtain
$$[f, g]_{L_c^*}=\sum_{k=1}^2 [f_k, g_k]_{L_{c,k}^*}=0$$
which implies that $L_c$ is self-adjoint by Lemma~\ref{l:basic fact}.

$\Longrightarrow$: Assume that $L_{c,1}$ is not essentially self-adjoint. 
Then, by Lemma~\ref{l:basic fact}, there exist $f, g \in D(L_{c,1}^*)$ such that $[f, g]_{L_{c,1}^*} \neq 0$. 
We define $\widetilde{f}, \widetilde{g} \in \ell^2(X, m)$ by extending $f$ and $g$ by 0 outside of $X_1$.
By Lemma~\ref{l:decomposition} we obtain
\begin{align*}
(\De \widetilde{f})(x)&=(\De_1 \widetilde{f}_1)(x)+(\De_\partial \widetilde{f}_3)(x)=(\De_1 f)(x)+(\De_\partial \widetilde{f}_3)(x) \quad \textup{ for } x \in X_1, \\
(\De \widetilde{f})(x)&=(\De_2 \widetilde{f}_2)(x)+(\De_\partial \widetilde{f}_3)(x)=(\De_\partial \widetilde{f}_3)(x)  \ \qquad \qquad \qquad \textup{ for } x \in X_2
\end{align*}
with analogous formulas for $\ow{g}$. Therefore,
we obtain $\ow{f}, \ow{g} \in D(L_c^*)$ since $\De_\partial$ is bounded and $f, g \in D(L_{c,1}^*)$. 
From (b) in Lemma~\ref{l:stab_key_lemma} and the fact that
$[\widetilde{f}_2, \widetilde{g}_2]_{L_{c,2}^*}=0$, we then obtain
$$[\widetilde{f}, \widetilde{g}]_{L_c^*}=\sum_{k=1}^2[\widetilde{f}_k, \widetilde{g}_k]_{L_{c,k}^*}=[f, g]_{L_{c,1}^*} \neq 0.$$
Therefore, $L_c$ is not essentially self-adjoint by Lemma~\ref{l:basic fact}.
An analogous argument works if $L_{c,2}$ is not essentially self-adjoint. This completes the proof.
\end{proof}

\begin{remark}[Kato--Rellich]
The result above can be strengthened by applying the Kato--Rellich theorem.
More specifically, we consider $L_{c,k}$ to be operators on $C_c(X)$ by extending them by 0 outside of $C_c(X_k)$ and use the fact
that $L_{c,1}+L_{c,2}$ is essentially self-adjoint if and only if both $L_{c,1}$ and $L_{c,2}$ are essentially self-adjoint
which can be obtained by using techniques found in the proof of Theorem~\ref{t:stability}.
By a variant of the Kato--Rellich theorem we then obtain that if there exist $ \alpha, \beta \in \R$ with $ \alpha < 1$ such that
$$ \| L_{c,\partial} \varphi \| \leq \alpha \left(\|L_c \varphi \| + \| (L_{c,1}+L_{c,2}) \varphi \| \right) + \beta \|\varphi\|$$
for all $\varphi \in C_c(X)$, then $L_c$ is essentially self-adjoint if and only if  $L_{c,1}$ and  $L_{c,2}$ are essentially self-adjoint,
see Theorem~X.13 in \cite{RS75}. Theorem~\ref{t:stability} is the special case when $\alpha=0$ as $L_{c,\partial}$ is bounded
if and only if $\Deg_\partial$ is bounded. As the result of Theorem~\ref{t:stability} is sufficient for our purposes, we have given a
proof to make the presentation self-contained.
\end{remark}


\section{Characterizations}\label{s:characterizations}
In this section we prove some characterizations for the essential self-adjointness 
of the Laplacian on birth-death chains. The first characterization is in terms of the edge weights and vertex measure.
We then give some applications of this result by connecting
the characterization for essential self-adjointness to other properties
such as form uniqueness and the Feller property. 
We also discuss the case of Schr{\"o}dinger operators on graphs.
There we show that there always exists a potential such that the Laplacian
plus that potential is essentially self-adjoint on locally finite graphs. For birth-death
chains, we show that a potential that is bounded below will not break the 
essential self-adjointness while an unbounded potential may do so.

The second characterization of the essential self-adjointness
of the Laplacian on birth-death chains is in terms of a new notion of capacity.
Here we show that essential self-adjointness is equivalent to the fact that
a point at infinity has zero or infinite capacity.
We will then discuss the necessity
of the assumption for the stability result above by showing that any
graph has an induced supergraph whose Laplacian is essentially self-adjoint.
We then extend our characterizations of essential self-adjointness 
to star-like graphs.
Finally, we characterize the $\ell^2$-Liouville property in the case 
of stars with two rays in the final subsection
of the paper. This includes some further connections to essential self-adjointness.

\subsection{Harmonic functions}
In order to use harmonic functions for our characterization of essential self-adjointness, 
we need to analyze their behavior on birth-death chains. First, we observe
that if a function is harmonic everywhere on a birth-death chain, then it is constant and, thus, will be in $\ell^2(\N_0,m)$
if and only if the measure of the entire vertex set is finite. However, if we relax the assumption on the harmonicity
at the first vertex, then we obtain non-constant functions and
the question if they are in $\ell^2(\N_0,m)$ becomes more interesting.

\begin{lemma}\label{l:harmonic}
Let $(\N_0,b,m)$ be a birth-death chain. Then, $v \in C(\N_0)$ satisfies $\De v(r)=0$ for 
all $r \neq 0$
if and only if
$$v(r+1) = v(1) + C \sum_{k=1}^r \frac{1}{b(k,k+1)}$$
for all $r \neq 0$ where $C=b(0,1)(v(1)-v(0)).$ In particular, if $v(1)\geq0$ and $C>0$, then $v$ is non-constant and $v \in \ell^2(X,m)$
if and only if
$$\sum_{r=0}^\infty \left( \sum_{k=0}^r \frac{1}{b(k,k+1)} \right)^2 m(r+1) < \infty.$$
An analogous statement holds if $v(1)\leq 0$ and $C<0$.
\end{lemma}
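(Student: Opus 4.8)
The plan is to turn the harmonicity condition into a first-order recursion for the discrete derivative. Since every vertex of $\N_0$ has at most two neighbours, $\De v$ is defined for every $v\in C(\N_0)$, and for $r\ge 1$ the equation $\De v(r)=0$ is equivalent to
$$b(r-1,r)\big(v(r)-v(r-1)\big)=b(r,r+1)\big(v(r+1)-v(r)\big).$$
Setting $w(r)=v(r+1)-v(r)$, this says that $b(r,r+1)w(r)$ does not depend on $r\ge 0$, hence equals its value at $r=0$, namely $C:=b(0,1)w(0)=b(0,1)(v(1)-v(0))$. Thus $w(r)=C/b(r,r+1)$ for all $r\ge 0$, and telescoping from $1$ to $r$ gives the stated formula $v(r+1)=v(1)+C\sum_{k=1}^r 1/b(k,k+1)$. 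For the converse I would reverse this: if the formula holds with $C=b(0,1)(v(1)-v(0))$, then $v(r+1)-v(r)=C/b(r,r+1)$ for every $r\ge 0$ (using the definition of $C$ for $r=0$), and substituting these two consecutive differences into $m(r)\De v(r)$ for $r\ge 1$ gives $C-C=0$.

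For the second part, suppose $v(1)\ge 0$ and $C>0$. Then $w(r)=C/b(r,r+1)>0$ for all $r$, so $v$ is strictly increasing and in particular non-constant. Write $S_r=\sum_{k=0}^r 1/b(k,k+1)$ and $T_r=\sum_{k=1}^r 1/b(k,k+1)=S_r-1/b(0,1)$, so that $v(r+1)=v(1)+CT_r$. Because $\N_0$ is connected, $S_r\ge S_1=1/b(0,1)+1/b(1,2)>1/b(0,1)$ for $r\ge 1$, so $T_r$ and $S_r$ are comparable up to positive multiplicative constants on $r\ge 1$; together with $v(1)\ge 0$, $C>0$ this yields $c_1 S_r\le v(r+1)\le c_2 S_r$ for $r\ge 1$ with $c_1,c_2>0$. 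Hence $\sum_{r\ge 1}v^2(r+1)m(r+1)$ and $\sum_{r\ge 1}S_r^2 m(r+1)$ converge or diverge together. Since $\sum_{r\ge 0}v^2(r)m(r)$ differs from $\sum_{r\ge 1}v^2(r+1)m(r+1)$ only by the finite terms $v^2(0)m(0)+v^2(1)m(1)$, and $\sum_{r\ge 0}S_r^2 m(r+1)$ differs from $\sum_{r\ge 1}S_r^2 m(r+1)$ only by the finite term $S_0^2 m(1)$, we conclude that $v\in\ell^2(\N_0,m)$ if and only if $\sum_{r\ge 0}S_r^2 m(r+1)<\infty$, which is the asserted condition. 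The case $v(1)\le 0$, $C<0$ follows by applying this to $-v$, whose associated constant is $-C>0$ and which is square summable precisely when $v$ is.

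I expect the only genuinely delicate step to be the two-sided estimate $c_1 S_r\le v(r+1)\le c_2 S_r$ in the $\ell^2$ part: one must check that the fixed additive constants $1/b(0,1)$ (the gap between $S_r$ and $T_r$) and $v(1)$ (the gap between $v(r+1)$ and $CT_r$) do not destroy comparability, which is exactly where the hypotheses $v(1)\ge 0$ and $C>0$ — making $v$ nonnegative and monotone on the positive vertices — and the bound $S_r\ge S_1>1/b(0,1)$ for $r\ge 1$ enter; indeed, without a sign constraint on $v(1)$ a harmonic function that increases to a nonpositive limit could be square summable even when $\sum_{r}S_r^2 m(r+1)$ diverges. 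The remaining arguments are routine telescoping and the discarding of finitely many finite terms.
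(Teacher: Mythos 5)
Your proof is correct and follows essentially the same route the paper intends: the paper dismisses the first claim as "an easy calculation using strong induction" and the second as "obvious consequences," and your telescoping of the first integral $b(r,r+1)(v(r+1)-v(r))=C$ together with the two-sided comparison $c_1 S_r\le v(r+1)\le c_2 S_r$ is exactly the standard way to fill in those details. The only substantive care needed is the comparability step you flag, and you handle it correctly using $v(1)\ge 0$, $C>0$ and $S_r\ge S_1>1/b(0,1)$.
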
 
\begin{proof}
The proof of the formula for $v$ is an easy calculation using strong induction. The remaining
statements are obvious consequences of the formula and basic arguments.
\end{proof}

\subsection{Characterization for birth-death chains via edge weights and vertex measure}
We now use all of the ingredients introduced above
to prove the first characterization. As mentioned in the introduction, this result is
classically known, see \cite{Ham20a, Ham20b, Ak65, EK18}.
Thus, we refer to it as Hamburger's criterion.

\begin{theorem}[Hamburger's criterion]\label{t:bd_characterization}
Let $(\N_0,b,m)$ be a birth-death chain. Then, the following statements are equivalent:
\begin{itemize}
\item[(i)] $L_c$ is essentially self-adjoint.
\item[(ii)] $\displaystyle{\sum_{r=0}^\infty \left( \sum_{k=0}^r \frac{1}{b(k,k+1)} \right)^2 m(r+1) = \infty}.$
\end{itemize}
\end{theorem}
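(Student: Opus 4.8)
The plan is to reduce the birth-death chain to a star with two rays by a doubling construction, apply the stability result (Theorem~\ref{t:stability}), and then invoke limit point-limit circle theory together with the harmonic function analysis of Lemma~\ref{l:harmonic}. First I would construct the ``doubled'' graph $(\Z, \tb, \tm)$ over the integers where for $r \geq 0$ the piece $\{0, 1, 2, \ldots\}$ carries the original weights $b, m$, and the mirror piece $\{0, -1, -2, \ldots\}$ carries the reflected weights, i.e.\ $\tb(-r, -r-1) = b(r, r+1)$ and $\tm(-r) = m(r)$ for $r \geq 1$, with a suitable choice of $\tb(-1,0)$ and $\tm(0)$ (for instance $\tb(-1,0)=b(0,1)$). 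Since each vertex has at most two neighbors, (A) holds for this graph. Taking $X_1 = \{0, 1, 2, \ldots\}$ and $X_2 = \{\ldots, -2, -1\}$, the boundary is finite and $\Deg_\partial$ is trivially bounded, so Theorem~\ref{t:stability} gives that the Laplacian $\tL_c$ on $(\Z,\tb,\tm)$ is essentially self-adjoint if and only if both restricted Laplacians are, and by symmetry of the construction these two restrictions are essentially self-adjoint simultaneously; moreover the restriction to $X_1$ with Neumann condition at $0$ is exactly (unitarily equivalent to) $L_c$ on the original birth-death chain. Hence $L_c$ is essentially self-adjoint if and only if $\tL_c$ is.

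Next I would bring in limit point-limit circle theory for the star with two rays $(\Z,\tb,\tm)$: $\tL_c$ is essentially self-adjoint if and only if it is in the limit point case at both $+\infty$ and $-\infty$, and by the reflection symmetry these two cases are again equivalent. So it suffices to decide whether $\tL_c$ is in the limit point case at $+\infty$. For this I want a harmonic function on $\Z$ that I can test for square summability near $+\infty$. Using Lemma~\ref{l:harmonic} on the $+\infty$ ray, take $v$ with $\De v(r) = 0$ for all $r \neq 0$, $v(0) = 0$, $v(1) = 1$, so that $C = b(0,1) > 0$ and $v$ is positive and increasing on the ray with $v(r+1) = 1 + b(0,1)\sum_{k=1}^r 1/b(k,k+1) = b(0,1)\sum_{k=0}^r 1/b(k,k+1)$. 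Then $v \in \ell^2(\infty)$ (in the sense of the doubled graph, i.e.\ $\sum_r v^2(r)\tm(r) < \infty$) if and only if $\sum_{r=0}^\infty \big(\sum_{k=0}^r 1/b(k,k+1)\big)^2 m(r+1) < \infty$, which is precisely the negation of condition (ii). Extending $v$ to a function harmonic on all of $\Z$ (it is harmonic except possibly at $0$, and one can adjust the values on the negative ray, or on finitely many vertices, to make it genuinely harmonic everywhere — this uses that the space of solutions of $\De \cdot = 0$ off a single vertex is two-dimensional) does not affect its behavior near $+\infty$.

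Putting the pieces together: if (ii) holds, then $v \notin \ell^2(\infty)$, so there is a harmonic (hence $\lambda$-harmonic for $\lambda = 0$) function not in $\ell^2$ at $+\infty$, which puts $\tL_c$ in the limit point case at $+\infty$, and by symmetry at $-\infty$, so $\tL_c$ is essentially self-adjoint, hence so is $L_c$. Conversely, if (ii) fails, then $v \in \ell^2(\infty)$, and a symmetric (or otherwise chosen) harmonic function is in $\ell^2$ at $-\infty$ as well; one then checks that a suitable such harmonic function lies in $\ell^2(\Z,\tm)$ and is non-constant, so the $\ell^2$-Liouville property fails, and by the remark in Section~\ref{s:setting} that a non-constant $\ell^2$ harmonic function obstructs essential self-adjointness, $\tL_c$ is not essentially self-adjoint, hence neither is $L_c$.

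I expect the main obstacle to be the bookkeeping in the doubling step: one must verify carefully that the Neumann restriction of $\tL_c$ to the nonnegative ray is genuinely unitarily equivalent to $L_c$ on $(\N_0, b, m)$ (checking the formal Laplacians agree vertex by vertex, including at $0$ where the Neumann condition removes the phantom edge to $-1$), and that the two ends really do behave identically so that ``limit point at $+\infty$'' and ``limit point at $-\infty$'' are equivalent. A secondary technical point is the extension of the function $v$ from ``harmonic off $0$'' to ``harmonic everywhere on $\Z$'' without changing its tail behavior at $+\infty$; this is where I would be most careful, since naively $v$ is only harmonic away from a single vertex and one needs the two-dimensional solution space to patch it, or alternatively one can work directly with limit point-limit circle theory applied to $\lambda$-harmonic functions off a finite set, which is the standard formulation in \cite{Tes00} and sidesteps the issue.
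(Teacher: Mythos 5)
Your proposal is correct and follows essentially the same route as the paper: double the chain, reduce via Theorem~\ref{t:stability}, build the increasing harmonic function of Lemma~\ref{l:harmonic}, and conclude via limit point-limit circle theory for sufficiency and via a non-constant harmonic function in $\ell^2$ (failure of the $\ell^2$-Liouville property) for necessity. The only cosmetic difference is that the paper doubles across an additional vertex $\ow{0}$, so that both halves are exact copies of $(\N_0,b,m)$ and the harmonic function is written down globally from the start, which disposes of the two bookkeeping points you flag (that the Neumann restriction is literally $L_c$, and that $v$ must be patched to be harmonic at the reflection vertex).
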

\begin{proof}
Before we prove the equivalence we give a general construction which will
be used for both implications.

We first double the birth-death chain $(\N_0,b,m)$
to get an induced supergraph over the integers with an additional vertex. More specifically, 
we let
$$\ow{X}= -\N \cup \{\ow{0}, 0 \} \cup \N$$
with edge weight $\ow{b} \colon \ow{X} \times \ow{X} \longrightarrow [0,\infty)$ given by
$$\ow{b}(k,k+1)= b(k,k+1)=\ow{b}(-k-1,-k)$$
for $k=1, 2, 3, \ldots$ with $\ow{b}(-1, \ow{0})=b(0,1)=\ow{b}(0,1)$ and extended symmetrically. Furthermore, we
let $\ow{b}(0,\ow{0})=\ow{b}(\ow{0},0)>0$ so that $0$ and $\ow{0}$ are connected and let $\ow{b}$ be zero for all other pairs of vertices.

Similarly, for the vertex measure $\ow{m} \colon \ow{X} \longrightarrow (0,\infty)$ we let
$\ow{m}(k)=\ow{m}(-k)=m(k)$ for $k=1, 2, 3, \ldots$ and let $\ow{m}(\ow{0})=m(0)=\ow{m}(0)$.
That is, we merely double the original birth-death chain and add one additional vertex so
that we get two exact copies of the original birth-death chain when we remove the edge
connecting $0$ and $\ow{0}$. By Theorem~\ref{t:stability}, it follows that $L_c$ is essentially self-adjoint
if and only if the Laplacian arising from $(\ow{X}, \ow{b},\ow{m})$ is essentially self-adjoint.

We now use Lemma~\ref{l:harmonic} to define a harmonic function on $\ow{X}$. More specifically, we let
$v(0)=0$, let $v(1)>0 $ and for $r \geq 1$ we define
$$v(r+1)=v(1) + C \sum_{k=1}^r \frac{1}{b(k,k+1)}$$
where $C=b(0,1)v(1)>0$.
We then let $v(\ow{0})=-C/\ow{b}(0,\ow{0})$ and
$$v(-r)=v(\ow{0})-C \sum_{k=0}^{r-1}\frac{1}{b(k,k+1)}$$
for $r\geq 1$.
By direct calculations and Lemma~\ref{l:harmonic}, $v$ is harmonic on $(\ow{X},\ow{b},\ow{m})$. 

After this general construction, we now give the proof.

\bigskip

(i) $ \Longrightarrow$ (ii):
If 
$$\sum_{r=0}^\infty \left( \sum_{k=0}^r \frac{1}{b(k,k+1)} \right)^2 m(r+1) < \infty,$$
then $v$ is harmonic, non-constant and $v \in \ell^2(\ow{X},\ow{m})$ by Lemma~\ref{l:harmonic}. 
Thus, the Laplacian on $(\ow{X},\ow{b},\ow{m})$ is not essentially self-adjoint as the $\ell^2$-Liouville
property fails, see \cite{HMW21}. Consequently,  
$L_c$ is not essentially self-adjoint
by Theorem~\ref{t:stability}.

\bigskip

(ii) $ \Longrightarrow$ (i): If
$$\sum_{r=0}^\infty \left( \sum_{k=0}^r \frac{1}{b(k,k+1)} \right)^2 m(r+1) = \infty,$$
then $v$ is harmonic on $(\ow{X}, \ow{b}, \ow{m})$ and $v$ is not in $\ell^2$ at $\infty$ and at $-\infty$
by Lemma~\ref{l:harmonic}.
Therefore, the Laplacian on $(\ow{X},\ow{b},\ow{m})$ 
is essentially self-adjoint by the limit point-limit circle theory, see \cite{Tes00}, 
and $L_c$ is essentially self-adjoint by Theorem~\ref{t:stability}.
\end{proof}

\subsection{Connections with other properties}
We now give some connections between essential self-adjointness 
and other properties on birth-death chains.

We first discuss form, or
equivalently, Markov uniqueness. For this, we consider two closed restrictions
of the energy form. Namely, $Q^{(D)}$ acts as $\Q$ on the space
$$D(Q^{(D)}) = \overline{C_c(X)}^{\| \cdot \|_{\Q}}$$
where $\|f\|_{\Q}^2 = \|f\|+\Q(f)$ is the form norm. The second
restriction, $Q^{(N)}$, acts as $\Q$ on
$D(Q^{(N)})=\ell^2(X,m) \cap \D$, the set of functions of finite energy in $\ell^2(X,m)$.

We say that a graph satisfies
\emph{form uniqueness} if $Q^{(D)}=Q^{(N)}$. In this case,
there is a unique closed form associated to the graph, see \cite{HKLW12, KLW21}
for more details. Equivalently, there is a unique
Markov realization of $\Delta$, that is, an operator which acts
as $\Delta$ and whose associated form is a Dirichlet form, see \cite{Sch20, KLW21}.
We note that essential self-adjointness always implies form uniqueness,
see \cite{HKLW12, KLW21}.

We also recall that a connected graph is called \emph{transient} if
$$\int_0^\infty e^{-tL}1_y(x)dt < \infty$$
for some (all) $x, y \in X$ where $e^{-tL}$ is the semigroup
associated to the operator $L=L^{(D)}$ coming from the form $Q^{(D)}$
and $1_y$ is the indicator function of the set $\{y\}$.
Otherwise, a graph is called \emph{recurrent}.

\begin{corollary}[Transient chains]\label{c:transient}
Let $(\N_0,b,m)$ be a transient birth-death chain. Then, the following statements are equivalent:
\item[(i)] $L_c$ is essentially self-adjoint.
\item[(ii)] $Q^{(D)}=Q^{(N)}$.
\item[(iii)] $m(\N_0)=\infty$.
\end{corollary}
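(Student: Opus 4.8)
The plan is to prove the three-way equivalence for transient birth-death chains by combining Hamburger's criterion (Theorem~\ref{t:bd_characterization}) with the standard series characterization of transience for birth-death chains. First I would recall that for a birth-death chain the transient/recurrent dichotomy is governed by the reciprocal edge-weight series: the chain is transient if and only if $\sum_{k=0}^\infty 1/b(k,k+1) < \infty$, equivalently, the inner sums $\sum_{k=0}^r 1/b(k,k+1)$ converge to a finite limit $S$ as $r\to\infty$. This is classical (see e.g.\ \cite{KLW21}) and follows from the fact that a monotone harmonic function of the form given in Lemma~\ref{l:harmonic} is bounded precisely when this series converges, so that transience (existence of a nonconstant bounded function of finite energy, or a Green's function) holds exactly in that regime. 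I would simply cite this.

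The main work is then the implication (iii) $\Longrightarrow$ (i), with (i) $\Longrightarrow$ (ii) and (ii) $\Longrightarrow$ (iii) being comparatively soft. For (i) $\Longrightarrow$ (ii): essential self-adjointness always implies form uniqueness $Q^{(D)} = Q^{(N)}$, as already noted in the excerpt citing \cite{HKLW12, KLW21}, so nothing new is needed. For (ii) $\Longrightarrow$ (iii): here I would argue contrapositively. Suppose $m(\N_0) < \infty$. Then, since the chain is transient, $S := \sum_{k=0}^\infty 1/b(k,k+1) < \infty$, so the monotone harmonic function $v$ from Lemma~\ref{l:harmonic} is bounded, hence lies in $\ell^2(\N_0,m)$ because $m(\N_0)<\infty$; thus $v \in D(Q^{(N)})$. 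On the other hand, $v$ is nonconstant and harmonic, so it cannot lie in $D(Q^{(D)}) = \overline{C_c(X)}^{\|\cdot\|_\Q}$: indeed, every element of $D(Q^{(D)})$ on a recurrent graph would be constant, but more to the point, on any graph a nonconstant harmonic function of finite energy witnesses the failure of form uniqueness (it is orthogonal in the energy inner product to $C_c(X)$ yet nonzero modulo constants). Hence $Q^{(D)} \neq Q^{(N)}$, contradicting (ii). So (ii) forces $m(\N_0) = \infty$.

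Finally, for (iii) $\Longrightarrow$ (i): assume $m(\N_0) = \infty$. We must verify condition (ii) of Theorem~\ref{t:bd_characterization}, namely $\sum_{r=0}^\infty \big( \sum_{k=0}^r 1/b(k,k+1) \big)^2 m(r+1) = \infty$. Split into two cases according to transience. If the chain is recurrent, then $\sum_{k=0}^r 1/b(k,k+1) \to \infty$, so the summands in Hamburger's series are eventually bounded below by, say, $m(r+1)$ (once the inner sum exceeds $1$), and since $m(\N_0) = \sum_r m(r+1) + m(0) = \infty$ the series diverges; hence $L_c$ is essentially self-adjoint by Theorem~\ref{t:bd_characterization}. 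If instead the chain is transient, then $\sum_{k=0}^r 1/b(k,k+1) \ge 1/b(0,1) > 0$ for every $r$, so Hamburger's series is bounded below by $(1/b(0,1))^2 \sum_{r=0}^\infty m(r+1) = \infty$ using $m(\N_0) = \infty$; again $L_c$ is essentially self-adjoint. Either way (i) holds, completing the cycle (i) $\Rightarrow$ (ii) $\Rightarrow$ (iii) $\Rightarrow$ (i).

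I expect the main obstacle to be organizing the (ii) $\Longrightarrow$ (iii) step cleanly — specifically, making precise why a nonconstant finite-energy harmonic function obstructs $Q^{(D)} = Q^{(N)}$, rather than just obstructing essential self-adjointness. The cleanest route is to invoke directly that $Q^{(D)} = Q^{(N)}$ would imply every finite-energy $\ell^2$ function is a $\|\cdot\|_\Q$-limit of finitely supported functions, and then check that the bounded harmonic $v$ cannot be so approximated because $\Q(v,\varphi) = \langle \De v, \varphi\rangle = 0$ for all $\varphi \in C_c(X)$ while $\Q(v) > 0$; this separates $v$ from $\overline{C_c(X)}^{\|\cdot\|_\Q}$ in the form inner product. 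Everything else is a routine comparison of series, with the only subtlety being to handle the transient and recurrent subcases separately in (iii) $\Longrightarrow$ (i) since the behavior of the inner sum $\sum_{k=0}^r 1/b(k,k+1)$ differs qualitatively between them.
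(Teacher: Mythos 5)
Your cycle (i) $\Rightarrow$ (ii) $\Rightarrow$ (iii) $\Rightarrow$ (i) is a reasonable organization, and two of its three legs are sound: (i) $\Rightarrow$ (ii) is the general fact that essential self-adjointness implies form uniqueness, exactly as in the paper, and your (iii) $\Rightarrow$ (i) in the transient case (lower-bounding the inner sums by $1/b(0,1)>0$ so that Hamburger's series dominates $b(0,1)^{-2}\sum_r m(r+1)=\infty$) is precisely half of the two-sided comparison the paper uses to get (i) $\Leftrightarrow$ (iii) directly from Theorem~\ref{t:bd_characterization}; the recurrent subcase you treat is vacuous here since transience is a standing hypothesis.

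The genuine gap is in (ii) $\Rightarrow$ (iii). The function $v$ of Lemma~\ref{l:harmonic} satisfies $\De v(r)=0$ only for $r\neq 0$; at the origin one has $\De v(0)=b(0,1)(v(0)-v(1))/m(0)=-C/m(0)\neq 0$ whenever $v$ is nonconstant. Consequently your key identity $\Q(v,\varphi)=\as{\De v,\varphi}=0$ for all $\varphi\in C_c(\N_0)$ is false: taking $\varphi=1_0$ gives $\Q(v,1_0)=\De v(0)\,m(0)=-C\neq 0$, so $v$ is not energy-orthogonal to $C_c(\N_0)$ and does not separate $D(Q^{(N)})$ from $D(Q^{(D)})$. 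Worse, this route cannot be repaired by a better choice of witness of the same kind: as the paper notes at the start of the subsection on harmonic functions, every function harmonic on all of $\N_0$ is constant, so there is no nonconstant finite-energy harmonic function in $\ell^2(\N_0,m)$ to exhibit. The correct witness in the transient, finite-measure case is the constant function $1$ itself: $m(\N_0)<\infty$ gives $1\in\ell^2(\N_0,m)\cap\D=D(Q^{(N)})$ with $\Q(1)=0$, while $1\in D(Q^{(D)})$ would produce $\varphi_n\in C_c(\N_0)$ with $\varphi_n\to 1$ in $\ell^2$ and $\Q(\varphi_n)\to\Q(1)=0$, which characterizes recurrence and contradicts transience. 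This is exactly the content of the result of Schmidt (\cite{Sch17}) that the paper invokes for this implication, namely that under $m(X)<\infty$ transience is equivalent to the failure of form uniqueness. With that substitution your argument closes; as written, the step fails.
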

\begin{proof}
It is well-known that the transience of a birth-death chain is equivalent to
$$\sum_{k=0}^\infty \frac{1}{b(k,k+1)}<\infty$$
see, e.g., \cite{KLW21, Woe09}. Thus, the equivalence of (i) and (iii) follows
directly from Theorem~\ref{t:bd_characterization}.

That (i) implies (ii) is a general fact which does not require transience
or the birth-death chain structure, see \cite{HKLW12, KLW21}. Finally,
that (ii) implies (iii) follows from the fact that if $m(\N_0)<\infty$,
then transience and failure of form uniqueness are equivalent, see \cite{Sch17}.
\end{proof}

\begin{remark}
For some related results connecting form uniqueness and measure
in the case of quantum graphs see \cite{KN23a}.
\end{remark}

For the recurrent case, we will present a connection to the semigroup 
vanishing at infinity.
More specifically, we let $\|f\|_\infty = \sup_{x \in X} |f(x)|$ and
$$C_0(X) = \overline{C_c(X)}^{\| \cdot \|_\infty}$$
denote the space of functions that vanish at infinity. We say that a graph satisfies
the \emph{Feller property} if
$$e^{-tL}(C_c(X)) \subseteq C_0(X)$$
for some (all) $t \geq 0$, see \cite{Woj17, HMW19, Adr21, KMW} for more details on the Feller property for
graphs.
\begin{corollary}[Recurrent chains]
Let $(\N_0,b,m)$ be a recurrent birth-death chain.
If the chain satisfies the Feller property, then $L_c$ is essentially self-adjoint.
\end{corollary}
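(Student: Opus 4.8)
The plan is to combine the Feller property with recurrence to produce a harmonic function that is bounded (hence in a strong sense not in $\ell^2$ at infinity on a recurrent chain), and then invoke the characterization already established. More precisely, on a recurrent birth-death chain the series $\sum_{k=0}^\infty 1/b(k,k+1)$ diverges, which by Lemma~\ref{l:harmonic} means the function $v$ with $v(r+1)=v(1)+C\sum_{k=1}^r 1/b(k,k+1)$ (for $v(1),C>0$) is unbounded and increasing; I want to show such unbounded growth of the ``Neumann-type'' harmonic coordinate forces $v \notin \ell^2(\N_0,m)$, which is exactly condition (ii) of Theorem~\ref{t:bd_characterization}. The point of the Feller hypothesis is that it rules out the alternative scenario where $m(\N_0) < \infty$ yet the chain is still essentially self-adjoint fails — more carefully, I expect to argue that on a recurrent chain the Feller property forces $m(\N_0) = \infty$, and then recurrence plus infinite measure will yield condition (ii) directly.

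Here is the concrete order of steps. First, recall (e.g.\ from \cite{KLW21, Woe09}) that recurrence of the birth-death chain is equivalent to $\sum_{k=0}^\infty 1/b(k,k+1) = \infty$. Second, I would show that recurrence combined with finite total measure $m(\N_0) < \infty$ is incompatible with the Feller property: on a recurrent graph the semigroup $e^{-tL}$ acting on $C_c(X)$ satisfies $e^{-tL}1_y \to$ a nonzero limit related to the invariant measure — in fact, if $m(X) < \infty$ then $1 \in \ell^2(X,m)$, the constant function is invariant, and $e^{-tL}1_y(x)$ does not tend to $0$ as $x \to \infty$, so $e^{-tL}1_y \notin C_0(X)$, contradicting Feller. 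Hence under the hypotheses we must have $m(\N_0) = \infty$. Third, with $\sum 1/b(k,k+1) = \infty$ and $m(\N_0)=\infty$ in hand, I would verify condition (ii) of Theorem~\ref{t:bd_characterization}: since $\left(\sum_{k=0}^r 1/b(k,k+1)\right)^2 \geq \left(\sum_{k=0}^0 1/b(k,k+1)\right)^2 = 1/b(0,1)^2$ is bounded below by a positive constant, we get
$$\sum_{r=0}^\infty \left(\sum_{k=0}^r \frac{1}{b(k,k+1)}\right)^2 m(r+1) \geq \frac{1}{b(0,1)^2}\sum_{r=0}^\infty m(r+1) = \frac{1}{b(0,1)^2}\big(m(\N_0)-m(0)\big) = \infty.$$
By Theorem~\ref{t:bd_characterization} this gives that $L_c$ is essentially self-adjoint.

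The main obstacle I anticipate is the second step: cleanly establishing that recurrence plus the Feller property forces $m(\N_0)=\infty$. The intuitive picture — a recurrent chain with finite measure admits the constant function as an invariant function, and the heat semigroup of a compactly supported initial datum relaxes toward a nonzero multiple of this constant rather than vanishing at infinity — is standard, but it needs to be phrased in terms of the operator $L = L^{(D)}$ coming from $Q^{(D)}$ and the definitions of recurrence and Feller given above, possibly citing \cite{Sch17, KLW21, Woj17, HMW19}. An alternative, perhaps cleaner route: use that the Feller property on a recurrent graph implies conservativeness/stochastic completeness-type behavior that is known to fail on finite-measure recurrent chains, or argue directly via the known equivalences between recurrence, $m(\N_0)<\infty$ and failure of form uniqueness from Corollary~\ref{c:transient}'s proof (which cites \cite{Sch17}) together with the fact that Feller implies essential self-adjointness-adjacent regularity. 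In any case, once $m(\N_0)=\infty$ is secured, the remainder is the elementary lower bound above and a citation to Theorem~\ref{t:bd_characterization}.
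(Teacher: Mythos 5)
There is a genuine gap: your second step, the claim that recurrence together with the Feller property forces $m(\N_0)=\infty$, is false, and the remainder of your argument rests entirely on it. Consider $m(r)=2^{-r}$ and $b(r,r+1)=2^{-r}/r$ for $r\geq 1$ (with, say, $b(0,1)=1$). Then $\sum_{k}1/b(k,k+1)=\sum_{k\geq 1}k2^{k}=\infty$, so the chain is recurrent; moreover $m(B_r^c)=2^{-r}$ and $m(B_r^c)/b(r,r+1)=r$, so $\sum_{r}m(B_r^c)/b(r,r+1)=\infty$ and the chain satisfies the Feller property by the characterization in \cite{Woj17}; yet $m(\N_0)=2<\infty$. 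The heuristic you offer conflates two different limits: the Feller property asks that $e^{-tL}1_y(x)\to 0$ as $x\to\infty$ for each \emph{fixed} $t$, whereas the relaxation of the semigroup toward a nonzero constant on a finite-measure recurrent chain is a $t\to\infty$ phenomenon; for fixed $t$ the semigroup can perfectly well vanish at infinity even when $m(X)<\infty$. So step two cannot be repaired by a more careful phrasing.

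The paper's proof takes the route you should take: it uses the quantitative characterization of the Feller property for recurrent birth-death chains from \cite{Woj17}, namely $\sum_{r}m(B_r^c)/b(r,r+1)=\infty$ with $B_r^c=\{r+1,r+2,\ldots\}$, and interchanges the order of summation to obtain
$$\sum_{r=0}^\infty \left(\sum_{k=0}^r \frac{1}{b(k,k+1)}\right)m(r+1)=\infty.$$
Since the inner partial sums are bounded below by $1/b(0,1)>0$ (indeed they tend to infinity by recurrence), the squared version required by condition (ii) of Theorem~\ref{t:bd_characterization} diverges as well. Your closing move (bounding the square below by a positive constant) is exactly right, but it must be fed by this Fubini-type rearrangement of the Feller condition rather than by the false assertion $m(\N_0)=\infty$.
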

\begin{proof}
We let $B_r^c = \{r+1, r+2, r+3, \ldots\}$ denote the complement of the ball
of radius $r$ about $0$ in $\N_0$.
In the recurrent case, i.e., when $\sum_{k} 1/b(k,k+1) = \infty$, 
the Feller property on birth-death chains is equivalent to
$\sum_{r} {m(B_r^c)}/{b(r,r+1)} = \infty$, see \cite{Woj17}.
 Now, rearranging gives
$$\sum_{r=0}^\infty \frac{m(B_r^c)}{b(r,r+1)} = 
\sum_{r=0}^\infty \left(\sum_{k=0}^r \frac{1}{b(k,k+1)}\right)m(r+1)=\infty.$$
Thus, $L_c$ is essentially self-adjoint by Theorem~\ref{t:bd_characterization}.
\end{proof}

\begin{remark}
For a general birth-death chain, the Feller property of the Neumann semigroup implies
the essential self-adjointness of $L_c$, see \cite{KMW}.
\end{remark}

\subsection{Schr{\"o}dinger operators}
We now discuss the case of Schr{\"o}dinger operators
which act as $\De + W$ where $W \in C(X)$ is a potential
acting on functions via $Wf(x)=W(x)f(x)$.
First, we note that if we start with an essentially self-adjoint Laplacian, then
a potential that is bounded from below does not
impact the essential self-adjointness.

\begin{proposition}\label{p:bounded_potential}
Let $(\N_0,b,m)$ be a birth-death chain. Let $W \in C(\N_0)$ with $W(x) \geq K$
for all $x \in \N_0$ and some $K \in \R$.
If $L_c$ is essentially self-adjoint, 
then $L_c +W$ is essentially self-adjoint. 
\end{proposition}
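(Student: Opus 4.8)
The plan is to reduce to the classical fact that a symmetric operator which is bounded below and essentially self-adjoint stays essentially self-adjoint under a symmetric perturbation that is relatively bounded with relative bound less than one — but here the cleanest route avoids even that and uses only the structure already set up in the excerpt. First I would reduce to the case $W \geq 0$: replacing $W$ by $W - K$ changes $L_c + W$ by the bounded operator $K \cdot \mathrm{Id}$, and adding a bounded self-adjoint operator never affects essential self-adjointness (the deficiency spaces are unchanged up to the shift $\lambda \mapsto \lambda - K$, or simply: $\overline{L_c + W} = \overline{L_c + (W-K)} + K$, and the right-hand side is self-adjoint iff $\overline{L_c + (W-K)}$ is). So assume $W \geq 0$.

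Next I would invoke the criterion from Lemma 2.1 (and the $\lambda$-harmonic function characterization): $L_c + W$ is essentially self-adjoint iff $\ker((L_c+W)^* - \lambda) = \{0\}$ for some $\lambda < 0$. Since $W \geq 0$, the operator $L_c + W$ is still symmetric and positive, and $(L_c + W)^*$ is again a restriction of the formal operator $\Delta + W$ with domain $\{f \in \ell^2(\N_0,m) : (\Delta + W)f \in \ell^2(\N_0,m)\}$ — this is the same computation that identifies $D(L_c^*)$ in the excerpt, with $b^2/m$ summability (assumption (A), automatic here) ensuring $\Delta + W$ maps $C_c$ into $\ell^2$. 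So suppose $f \in \ell^2(\N_0,m)$ solves $(\Delta + W)f = \lambda f$ with $\lambda < 0$; equivalently $\Delta f = (\lambda - W)f$. The key point is that $\lambda - W(x) \leq \lambda < 0$ for every $x$, so $f$ is a solution of $\Delta f = (\lambda - W) f$ with a strictly negative "spectral parameter function."

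The main step, then, is to show such an $f$ must vanish. Here I would run the standard Green's-formula / maximum-principle argument: for $\varphi \in C_c(\N_0)$ one has $\langle (\Delta + W)f, \varphi\rangle = Q(f,\varphi) + \langle Wf, \varphi\rangle$, and approximating $f$ by cutoffs $\varphi_n = \eta_n f$ with $\eta_n$ the indicator of $\{0,\dots,n\}$ (a standard cutoff; on a birth-death chain the "gradient of $\eta_n$" lives only on the single edge $(n, n+1)$, so the cutoff error term is $b(n,n+1)(f(n+1)-f(n))f(n)$, which tends to $0$ along a subsequence since $f \in \ell^2$ — this is exactly the kind of estimate behind the $\ell^2$-Liouville/limit-point arguments already cited), one gets $0 \geq \langle \Delta f, f\rangle = \langle (\lambda - W) f, f\rangle \leq \lambda \|f\|^2 \leq 0$ in the limit, forcing $\|f\| = 0$. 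The hard part is making the cutoff limit rigorous without circularity: I would either cite the general fact that for positive $L_c$ essential self-adjointness is equivalent to triviality of $\lambda$-negative $\ell^2$ solutions (which already encodes this) applied to the perturbed operator after checking it is still a nonnegative restriction of a formal difference operator satisfying (A), or else give the one-line cutoff estimate directly using that birth-death chains have only one edge crossing each sphere. Either way, once the cutoff error is controlled the sign chase is immediate. An even shorter alternative: since $W + |K| + 1 \geq 1 > 0$ is bounded below by a positive constant, one can instead note $L_c + W + c \geq c \cdot \mathrm{Id} > 0$ for suitable $c$ and apply the same harmonic-function criterion with $\lambda$ replaced by $\lambda - c$; I expect to present the argument in whichever of these two forms reads most smoothly, with the cutoff control of $b(n,n+1)(f(n+1)-f(n))f(n) \to 0$ being the only nonformal ingredient.
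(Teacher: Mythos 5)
Your reduction to $W \ge 0$ and the identification of $(L_c+W)^*$ as a restriction of the formal operator $\De + W$ are fine, but the main step has a genuine gap: the cutoff error term $b(n,n+1)(f(n+1)-f(n))f(n)$ does \emph{not} tend to zero, even along a subsequence, merely because $f \in \ell^2(\N_0,m)$. Square summability with respect to $m$ only gives $f^2(n)m(n) \to 0$ and says nothing about $b(n,n+1)$, which is exactly the quantity that is large in the non-essentially-self-adjoint examples. Indeed, summation by parts over $\{0,\dots,n\}$ gives
$$\sum_{x=0}^n \De f(x)f(x)m(x) = \sum_{r=0}^{n-1} b(r,r+1)\big(f(r)-f(r+1)\big)^2 + b(n,n+1)f(n)\big(f(n)-f(n+1)\big),$$
and since the left-hand side equals $\sum_{x=0}^n(\lm - W(x))f^2(x)m(x) \le \lm\sum_{x=0}^n f^2(x)m(x)$, any nonzero solution forces $b(n,n+1)f(n)(f(n)-f(n+1)) \le \lm\|f\|^2/2 < 0$ for all large $n$: the boundary term is bounded away from zero precisely when you need it to vanish. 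The telling symptom is that your argument never actually uses the hypothesis that $L_c$ is essentially self-adjoint; if the cutoff estimate were correct, taking $W=0$ would show that \emph{every} birth-death Laplacian is essentially self-adjoint, contradicting Hamburger's criterion (Theorem~\ref{t:bd_characterization}). Your fallback of citing the equivalence of essential self-adjointness with triviality of $\lm$-harmonic $\ell^2$ solutions, applied to the perturbed operator, is circular, since that triviality is exactly what must be proved.

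The paper supplies the missing ingredient through the one-dimensional structure rather than through a form identity. Writing the eigenvalue equation as the recursion $v(r+1)-v(r) = b(r,r+1)^{-1}\sum_{k=0}^r (W(k)-\lm)v(k)m(k)$ with $\lm < K$, one sees that a solution with $v(0)>0$ is positive and increasing; it is then compared from below with the solution $w$ of $\De w = \lm_1 w$, $w(0)=v(0)$, where $\lm_1 = \sup_k(\lm - W(k)) < 0$, using $-\lm_1 \le W(k)-\lm$. The essential self-adjointness of $L_c$ enters exactly here: it forces $w \notin \ell^2(\N_0,m)$, hence $v \notin \ell^2(\N_0,m)$, a contradiction. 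If you want to rescue your Green's-formula framework, you would have to show that the persistent negative boundary term forces $f$ to grow too fast to lie in $\ell^2(\N_0,m)$, which in effect reproduces this comparison argument.
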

\begin{proof}
It suffices to show that if $v \in C(\N_0)$ satisfies $(\Delta + W)v = \lm v$ for $\lm < K$ and $v \in \ell^2(\N_0,m)$,
then $v=0$. If $v(0)=0$, then $v=0$ from $(\Delta + W)v = \lm v$ and induction. 
So, suppose $v(0)>0$. We will 
show that this gives a contradiction to $v \in \ell^2(\N_0,m)$.

By induction, $(\Delta + W)v = \lm v$ is equivalent to
$$v(r+1)-v(r) = \frac{1}{b(r,r+1)} \sum_{k=0}^r (W(k)-\lm) v(k)m(k)$$
see, e.g., \cite{KLW21}. In particular, $v$ is positive and increasing
since $v(0)>0$ and $W(k) - \lm >0$.

Now, let $\lm_1=\sup_{k \in \N_0}( \lm-W(k))<0$
and $w \in C(\N_0)$ satisfy $w(0)=v(0)$ and $\De w = \lm_1 w$. Similar to the above,
it follows by induction that $w$ satisfies
$$w(r+1)-w(r) = \frac{-\lm_1}{b(r,r+1)} \sum_{k=0}^r w(k)m(k)$$
and, thus, $v(r) \geq w(r)$ by a direct comparison
since $-\lm_1 \leq W(k)-\lm$ for all $k \in \N_0$. As $L_c$ is assumed to be essentially self-adjoint,
it follows that $w \not \in \ell^2(\N_0,m)$ and thus $v \not \in \ell^2(\N_0,m)$.
This contradiction implies $v=0$.
\end{proof}

\begin{remark}
We note that, in the above, the form corresponding to $L_c + W$ is semibounded. For general graphs,
whenever such a form is semibounded and the graph is metrically complete with
respect to an intrinsic metric (which implies that $L_c$ is essentially self-adjoint, see Remark~\ref{rem:intrinsic} below), 
then the corresponding operator is essentially self-adjoint, 
see \cite{GKS16} as well as \cite{KMN22} for another proof which involves quantum graphs.
Here, we only assume that $L_c$ is essentially self-adjoint, but make the stronger assumption
of a uniform lower bound on the potential.
\end{remark}

\eat{
We also note that the proof of Proposition~\ref{p:bounded_potential} gives that for any
any Laplacian on a birth-death chain, there exists a potential that makes
the corresponding Schr{\"o}dinger operator essentially self-adjoint. }
We also note that we can always choose a potential that will make
the Schr{\"o}dinger operator essentially self-adjoint. As the result does
not depend on the graph being a birth-death chain, we give a general statement
and proof.

\begin{proposition}\label{p:potential_help}
Let $(X, b, m)$ be a weighted graph satisfying $\inf_{y \sim x} m(y)>0$ 
for all $x \in X$. 
Then, there exists $W \in C(X)$
such that $L_c +W$ is essentially self-adjoint.
\end{proposition}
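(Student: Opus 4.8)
The plan is to add a large, fast-growing potential supported away from a fixed basepoint so that the Schrödinger operator becomes essentially self-adjoint by the harmonic-function criterion. The key point is that under the assumption $\inf_{y \sim x} m(y)>0$, we have a good handle on which functions lie in $\ell^2(X,m)$: if $v \in \ell^2(X,m)$ then $v(x) \to 0$ along any sequence of distinct vertices, and moreover $\sum_x v^2(x) m(x) < \infty$ forces $v$ to decay. Fix a vertex $o \in X$ and, for $r \geq 0$, let $S_r = \{x \in X : d(x,o) = r\}$ denote the distance sphere, which is finite since the graph is locally finite under (A) together with $\inf_{y \sim x} m(y)>0$ need not hold—wait, that inf condition does not give local finiteness. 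So instead I would work directly with the recursion/comparison argument rather than with spheres.

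More concretely, I would mimic the proof of Proposition~\ref{p:bounded_potential}: it suffices to choose $W \geq 0$ so that for every $\lm < 0$, any $v \in \ms{F}$ with $(\Delta + W)v = \lm v$ and $v \in \ell^2(X,m)$ vanishes. Using Lemma~\ref{l:basic fact} and the characterization of essential self-adjointness via $\ker(L_c^* - \lm)$, it is enough to show $\ker((L_c+W)^* - \lm) = \{0\}$ for one $\lm < 0$. The strategy is: enumerate $X = \{x_0, x_1, x_2, \ldots\}$ and define $W$ inductively so that $W(x_n)$ is so large that any $\lm$-eigenfunction $v$ with $\lm < 0$ is forced, via the eigenvalue equation at $x_n$, to satisfy a pointwise bound making $\sum v^2 m$ diverge unless $v \equiv 0$. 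The cleanest route: pick $\lm = -1$ and show that if $W$ grows fast enough then the only solution of $\Delta v + Wv = -v$ in $\ell^2$ is zero. Rearranging the eigenvalue equation at $x$ gives
$$v(x)\big(\Deg(x) + W(x) + 1\big) = \frac{1}{m(x)}\sum_{y \sim x} b(x,y) v(y),$$
so $|v(x)| \leq \frac{1}{W(x)} \cdot \frac{1}{m(x)} \sum_{y \sim x} b(x,y) |v(y)| \leq \frac{1}{W(x)} \Deg(x)^{1/2} \big(\sum_y b^2(x,y)/m(y)\big)^{1/2} \cdot \|v\| \cdot m(x)^{-1/2}$ by Cauchy--Schwarz and (A). Hence choosing $W(x)$ large enough (depending only on $x$, $\Deg(x)$, $m(x)$ and the (A)-sum at $x$) forces $|v(x)|$ to be an arbitrarily small multiple of $\|v\|$; summing $v^2(x) m(x)$ over a suitable growing family of vertices and comparing with $\|v\|^2$ yields $\|v\|^2 \leq \tfrac12 \|v\|^2$, hence $v = 0$.

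The main obstacle is making the "summing" step rigorous without local finiteness: I cannot split $X$ into finitely many pieces along spheres. The fix is to demand, at step $n$ of the induction, that $W(x_n)$ is large enough that $v^2(x_n) m(x_n) \leq 2^{-n-1} \|v\|^2$ holds automatically for every $\ell^2$ eigenfunction $v$; this is possible because the bound on $|v(x_n)|$ above involves only finitely much data at $x_n$ (namely $m(x_n)$, $\Deg(x_n)$, and $\sum_y b^2(x_n,y)/m(y)$, all finite by (A)), so one just picks $W(x_n)$ past the resulting threshold. Then $\|v\|^2 = \sum_{n} v^2(x_n) m(x_n) \leq \sum_n 2^{-n-1}\|v\|^2 = \|v\|^2$ is not yet a contradiction, so I would instead use thresholds $2^{-n-2}$ to get $\|v\|^2 \leq \tfrac12\|v\|^2$, forcing $v=0$. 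Finally, one checks $L_c + W$ is still a well-defined symmetric operator on $C_c(X)$ (clear, since $W$ is real-valued and multiplication preserves $C_c(X)$) and that (A) is unaffected; then Lemma~\ref{l:basic fact} gives essential self-adjointness. The role of the hypothesis $\inf_{y\sim x} m(y)>0$ is only to guarantee (A) holds so that $L_c$, hence $L_c+W$, is an operator on $\ell^2(X,m)$ to begin with; the argument itself only uses (A).
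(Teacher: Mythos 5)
Your proof is correct, but it takes a genuinely different route from the paper's. The paper fixes the explicit local potential $W(x) = \Deg(x)\left(\inf_{y\sim x}m(y)\right)^{-1/2}$ and argues geometrically: starting from any vertex where a $\lm$-eigenfunction ($\lm<0$) is positive, the eigenvalue equation produces an infinite non-repeating path $(x_k)$ along which $v$ increases by the factor $(\Deg(x_k)+W(x_k)-\lm)/\Deg(x_k)$, and the hypothesis $\inf_{y\sim x}m(y)>0$ is exactly what makes each term $\left((\Deg(x_k)+W(x_k)-\lm)/\Deg(x_k)\right)^2 m(x_{k+1})\ge m(x_{k+1})/\inf_{y\sim x_k}m(y)\ge 1$, so that $v\notin\ell^2(X,m)$ unless $v=0$. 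You instead enumerate the vertices and choose $W(x_n)$ so large that the eigenvalue equation at $x_n$, Cauchy--Schwarz and (A) force $v^2(x_n)m(x_n)\le 2^{-n-2}\|v\|^2$, whence $\|v\|^2\le\tfrac{1}{2}\|v\|^2$ and $v=0$. Two comments. First, your intermediate constant is slightly off: applying Cauchy--Schwarz to $\sum_y b(x,y)|v(y)|$ with the splitting $b(x,y)m(y)^{-1/2}\cdot|v(y)|m(y)^{1/2}$ gives $|v(x)|\le \left(\sum_y b^2(x,y)/m(y)\right)^{1/2}\|v\|\,/\,(m(x)W(x))$ rather than the expression with $\Deg(x)^{1/2}m(x)^{-1/2}$ that you wrote; but since you only need \emph{some} finite local constant, and this one is finite by (A), the slip is cosmetic. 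Second, and more interestingly, your argument never uses $\inf_{y\sim x}m(y)>0$, only the standing assumption (A); since the remark following this proposition states that it is unclear whether the measure assumption is essential, your proof --- at the price of a potential defined through an arbitrary enumeration of $X$ rather than by a local formula --- shows that it is not.
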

\begin{proof}
We will construct $W \geq 0$ such that there is no non-trivial 
$v \in \ell^2(X,m)$ which satisfies $(\Delta + W)v = \lm v$ for $\lm < 0$. 

Suppose that $v(x_0)>0$. From  $(\Delta + W)v(x_0) = \lm v(x_0)$ we
get that there must exist a neighbor $x_1 \sim x_0$ such that
$$ v(x_1) \geq \left( \frac{\Deg(x_0)+W(x_0)-\lambda}{\Deg(x_0)}\right) v(x_0).$$
If not, then $v(y) < \left[ ({\Deg(x_0)+W(x_0)-\lambda})/({\Deg(x_0)})\right] v(x_0)$
for all $y \sim x_0$ which leads to a contradiction to  
$(\Delta + W)v(x_0) = \lm v(x_0)$.
In particular, the above implies $v(x_1) > v(x_0)$.

Iterating this argument, we get a path $(x_k)$ of non-repeating vertices such that
$$ v(x_{k+1}) \geq \left( \frac{\Deg(x_k)+W(x_k)-\lambda}{\Deg(x_k)}\right) v(x_k).$$
Hence, letting $W$ be given by
$W(x) = {\Deg(x)}{\left(\inf_{y \sim x} m(y)\right)}^{-1/2}$
implies
$$\sum_{k=0}^\infty  \left( \frac{\Deg(x_k)+W(x_k)-\lambda}{\Deg(x_k)}\right)^2 m(x_{k+1}) = \infty$$
for every infinite path $(x_k)$.
But this gives that $v$ cannot be in $\ell^2(X,m)$ unless $v=0$. This completes
the proof.
\eat{
If $v(0)=0$, then $v=0$ from $(\Delta + W)v = \lm v$ and induction. So, suppose $v(0)>0$
and by rescaling that $v(0)=1$. 
By induction, $(\Delta + W)v = \lm v$ implies
$$v(r+1)-v(r) = \frac{1}{b(r,r+1)} \sum_{k=0}^r (W(k)-\lm) v(k)m(k) \geq \frac{m(r)}{b(r,r+1)} W(r)$$
which implies
$$v^2(r+1)m(r+1) > \frac{m^2(r)m(r+1)}{b^2(r,r+1)} W^2(r)$$
and, thus, $W$ can be chosen so that $v \not \in \ell^2(\N_0, m)$.}
\end{proof}

\begin{remark}
We note that the technique of the proof was first used in \cite{Woj08}
to establish the essential self-adjointness of the Laplacian on graphs with
counting measure. This technique has subsequently been used to improve
such results in a variety of contexts, see \cite{KL12, BG15, GKS16, Sch20}.
It is not clear to us if the assumption on the measure found in the result
above is essential.
\end{remark}

Next, we present a consequence of a generalized ground state transform which allows us to pass
from a semibounded Schr{\"o}dinger operator to a Laplacian plus a constant. We can then apply
Hamburger's criterion to such operators.
\eat{
Abstractly, this case can be related to the case of a Laplacian by using general transform theory, see
\cite{Tor10, HK11, CTT11, KLW21}. We now give two variants on 
how this idea can be used to give a characterization for the essential self-adjointness of a Schr{\"o}dinger operator.}

\begin{proposition}\label{p:potentials}
Let $(\N_0,b,m)$ be a birth-death chain.
Let $W \in C(\N_0)$ be such that $L_c + W$ is semibounded
as a quadratic form with lower bound $K \in \R$.
Let $v \in C(\N_0)$ with $v>0$ satisfy $(\De + W)v=\lm v$ for $\lm < K$.
Then $L_c + W$ is essentially self-adjoint if and only if 
$$\sum_{r=0}^\infty \left( \sum_{k=0}^r \frac{1}{b(k,k+1)v(k)v(k+1)} \right)^2 v^2(r+1)m(r+1) = \infty.$$
\end{proposition}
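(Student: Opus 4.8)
The plan is to use a ground state transform to convert the Schrödinger operator $L_c + W$ on $\ell^2(\N_0, m)$ into a Laplacian (plus a constant) on a new birth-death chain, and then simply invoke Hamburger's criterion (Theorem~\ref{t:bd_characterization}). Since $v > 0$ satisfies $(\De + W)v = \lm v$, the substitution $f \mapsto vf$ should intertwine $L_c + W - \lm$ with the Laplacian of the weighted graph $(\N_0, \tb, \tm)$ where $\tm(r) = v^2(r)m(r)$ and $\tb(r,r+1) = b(r,r+1)v(r)v(r+1)$. Concretely, I would first verify the pointwise identity
\begin{equation*}
v(x)\big((\De + W)(vf)(x) - \lm (vf)(x)\big) = \widetilde{\De} f(x) \cdot v^2(x)/m(x) \cdot m(x)
\end{equation*}
by a direct computation expanding both sides, using $(\De+W)v = \lm v$ to cancel the zeroth-order terms; this is the standard generalized ground state representation (see \cite{HK11, KLW21}). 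One must check that $\tb$ inherits summability and that $(\N_0,\tb,\tm)$ is again a birth-death chain, which is immediate since $v>0$ and $b(x,y)>0 \iff |x-y|=1$ is preserved under multiplication by $v(x)v(y)$.

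Next I would establish that the map $U\colon \ell^2(\N_0,\tm) \to \ell^2(\N_0,m)$, $Uf = vf$, is a unitary bijection (it is isometric by the definition $\tm = v^2 m$, and surjective since $v > 0$ is everywhere positive and finite), and that under this unitary the operator $\widetilde{L}_c$ (the minimal Laplacian of $(\N_0,\tb,\tm)$) corresponds to $L_c + W - \lm$ restricted to $U(C_c(\N_0)) = C_c(\N_0)$. Since $C_c$ is mapped onto $C_c$, essential self-adjointness is preserved: $\widetilde{L}_c$ is essentially self-adjoint if and only if $L_c + W - \lm$ is, and the latter is equivalent to essential self-adjointness of $L_c + W$ since subtracting a constant does not affect it. Here the semiboundedness hypothesis with bound $K$ and the choice $\lm < K$ are what guarantee such a positive solution $v$ exists and that the transform lands us in a genuinely positive (hence form-bounded-below) setting; I would remark on why $\lm < K$ is the natural range, referencing the Allegretto--Piepenbrink-type theory.

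Finally, applying Theorem~\ref{t:bd_characterization} to $(\N_0,\tb,\tm)$ gives that $\widetilde{L}_c$ is essentially self-adjoint if and only if
\begin{equation*}
\sum_{r=0}^\infty \left( \sum_{k=0}^r \frac{1}{\tb(k,k+1)} \right)^2 \tm(r+1)
= \sum_{r=0}^\infty \left( \sum_{k=0}^r \frac{1}{b(k,k+1)v(k)v(k+1)} \right)^2 v^2(r+1)m(r+1) = \infty,
\end{equation*}
which is exactly the stated condition. The main obstacle I anticipate is the bookkeeping in the ground state transform itself — making sure the transformed edge weights and measure are correctly identified and that all the relevant domains ($C_c$, and then the closures) correspond under $U$ — rather than any deep conceptual difficulty; once the intertwining identity is in hand, the rest follows mechanically from Theorem~\ref{t:bd_characterization}. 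A secondary point worth care is confirming that $\tb$ still satisfies the summability needed to define $\widetilde{\De}$ on $C_c$, but since the original chain satisfies (A) automatically and $v$ is finite at each vertex, this is routine.
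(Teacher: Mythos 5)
Your proposal is correct and follows essentially the same route as the paper: both apply the generalized ground state transform via the isometry $T_v f = vf$ from $\ell^2(\N_0, mv^2)$ to $\ell^2(\N_0,m)$, identify the conjugated operator as the Laplacian of the birth-death chain with edge weights $b(x,y)v(x)v(y)$ and measure $v^2m$ plus the constant $\lm$, and then invoke Hamburger's criterion. The intertwining identity you propose to verify is the correct one, and the domain and unitarity checks you flag are exactly the routine points the paper delegates to the cited references.
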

\begin{proof}
\eat{
This can be seen by transforming a Schr{\"o}dinger operator to a Laplacian using a generalized eigenfunction.
In this case, the transform is referred to as a ground state transform, see \cite{HK11, KLW21} for more details.
In the construction, we assume that $W \geq K$ for some constant $K$ and 
take a strictly positive function $v >0$ which satisfies $(\De + W)v=\lm v$ for $\lm \leq K$,
see \cite{Dod06, HK11, KLW21} for the construction of $v$. In this case, it follows that the restriction of
$\De +W$ to the finitely supported functions 
is unitarily equivalent to a Laplacian with edge weights $b(x,y)v(x)v(y)$ and vertex measure
$v^2(x)m(x)$ plus a constant potential and thus the essential
self-adjointness can be characterized using Hamburger's criterion.}
Using the isometry
$$T_v \colon \ell^2(\N_0, m v^2) \longrightarrow \ell^2(\N_0,m)$$
defined via $T_vf(x)=v(x)f(x)$ for $x \in X$ and a generalized ground state transform, see \cite{HK11, KPP20, KLW21},
we get that $L_c +W$ on $C_c(\N_0) \subseteq \ell^2(\N_0,m)$ is unitarily equivalent to 
$H$ acting on $C_c(\N_0) \subseteq \ell^2(\N_0, mv^2)$  via
$$H \varphi(x) = \frac{1}{m(x)v^2(x)} \sum_{y \in X} b(x,y)v(x)v(y)(\varphi(x)-\varphi(y)) + \lm \varphi(x)$$
as $T_v^{-1}(L_c+W)T_v=H$.
The result now follows by applying Hamburger's criterion, Theorem~\ref{t:bd_characterization}, to $H$.
\end{proof}

\begin{remark}
The fact that $v>0$ as above exists for $\lm < K $
follows by a generalized Allegretto-Piepenbrink theorem \cite{KPP20}.
\end{remark}

We already know that if $L_c$ is essentially self-adjoint and $W$ is bounded
from below, then $L_c+W$ is essentially self-adjoint.
In principle, the result above could be used to show a type of converse to this,
i.e., that 
if $L_c+W$ is essentially self-adjoint and $W$
does not grow too rapidly, then $L_c$ is essentially self-adjoint. 
However, in order to obtain such a result, we need a better understanding of the behavior
of $v$.
We now give a condition
which implies that $v$ is bounded, thus,
we can remove it from the summability condition.
\begin{corollary}
Let $(\N_0,b,m)$ be a birth-death chain. Let $W \in C(\N_0)$ with $W \geq 0$
and let $q=Wm$.
If 
$$\sum_{r=0}^\infty \frac{q(B_r) + m(B_r)}{b(r,r+1)} <\infty$$
where $B_r =\{0, 1, 2, \ldots, r\}$, then
$L_c + W$ is essentially self-adjoint if and only if $L_c$ is essentially self-adjoint, i.e.,
$$\sum_{r=0}^\infty \left( \sum_{k=0}^r \frac{1}{b(k,k+1)} \right)^2 m(r+1) = \infty.$$
\end{corollary}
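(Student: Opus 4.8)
The plan is to deduce the statement from Proposition~\ref{p:potentials} together with Hamburger's criterion, Theorem~\ref{t:bd_characterization}, by showing that the generalized ground state appearing in Proposition~\ref{p:potentials} is bounded above and below by positive constants. Since $W \geq 0$ and $L_c$ is positive, the form associated with $L_c+W$ is semibounded with lower bound $K=0$, so Proposition~\ref{p:potentials} applies. Fixing $\lm=-1<0$, the generalized Allegretto--Piepenbrink theorem quoted after Proposition~\ref{p:potentials} provides $v\in C(\N_0)$ with $v>0$ and $(\De+W)v=\lm v$, and then $L_c+W$ is essentially self-adjoint if and only if
$$\sum_{r=0}^\infty \left( \sum_{k=0}^r \frac{1}{b(k,k+1)v(k)v(k+1)} \right)^2 v^2(r+1)m(r+1) = \infty.$$
Once we know $v$ is trapped between two positive constants, this sum will be comparable to the Hamburger sum for $L_c$ and the equivalence follows.

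To get at $v$ concretely I would argue exactly as in the proof of Proposition~\ref{p:bounded_potential}: an induction on $r$ gives
$$v(r+1) - v(r) = \frac{1}{b(r,r+1)} \sum_{k=0}^r (W(k) - \lm) v(k) m(k) = \frac{1}{b(r,r+1)} \sum_{k=0}^r (W(k)+1) v(k) m(k),$$
and since $v>0$ and $W\geq 0$ every summand on the right is positive, so $v$ is strictly increasing and, in particular, $v(r)\geq v(0)>0$ for all $r$.

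The \emph{main obstacle} is the matching upper bound on $v$. Here I would use monotonicity, $v(k)\leq v(r)$ for $k\leq r$, together with $\sum_{k=0}^r (W(k)+1)m(k) = q(B_r) + m(B_r)$, to rewrite the recursion as
$$v(r+1) \leq v(r)\left(1 + \frac{q(B_r) + m(B_r)}{b(r,r+1)}\right),$$
and then iterate from $r=0$ and use $1+x\leq e^{x}$ to obtain
$$v(r+1) \leq v(0)\prod_{j=0}^{r}\left(1 + \frac{q(B_j)+m(B_j)}{b(j,j+1)}\right) \leq v(0)\exp\left(\sum_{j=0}^\infty \frac{q(B_j)+m(B_j)}{b(j,j+1)}\right)=:C.$$
By the standing hypothesis $C$ is a finite constant, so $0<v(0)\leq v(r)\leq C$ for every $r\in\N_0$.

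With this two-sided bound the conclusion is routine: since $v(0)^2\leq v(k)v(k+1)\leq C^2$ and $v(0)^2\leq v^2(r+1)\leq C^2$, the sum from Proposition~\ref{p:potentials} lies between fixed positive multiples of $\sum_{r=0}^\infty \big(\sum_{k=0}^r 1/b(k,k+1)\big)^2 m(r+1)$, so one is infinite exactly when the other is. Hence Proposition~\ref{p:potentials} and Theorem~\ref{t:bd_characterization} together give that $L_c+W$ is essentially self-adjoint if and only if $L_c$ is, i.e., if and only if $\sum_{r=0}^\infty \big(\sum_{k=0}^r 1/b(k,k+1)\big)^2 m(r+1)=\infty$. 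The only nontrivial step is the uniform upper estimate for $v$; everything else is the comparison above and a citation of the results already established.
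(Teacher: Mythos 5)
Your proof is correct and follows essentially the same route as the paper: both reduce the statement to Proposition~\ref{p:potentials} (together with Proposition~\ref{p:bounded_potential}, whose recursion you reuse) by showing that the generalized ground state $v$ is trapped between two positive constants under the summability hypothesis, so that the transformed Hamburger sum is comparable to the original one. The only difference is that the paper simply cites the recursion-formula result of \cite{KLW13, KLW21} for the boundedness of $v$, whereas you derive the upper bound directly from the monotonicity of $v$ and the estimate $1+x\le e^x$, which makes the argument self-contained.
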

\begin{proof}
By a recursion formula, the solution of
$(\De + W)v=\lm v$ used in Proposition~\ref{p:potentials} is bounded if and only if
$$\sum_{r=0}^\infty \frac{q(B_r) + m(B_r)}{b(r,r+1)} <\infty,$$
see \cite{KLW13, KLW21}. Thus, the result follows
directly from Propositions~\ref{p:bounded_potential}~and~\ref{p:potentials}.
\end{proof}

Finally, we note that if the potential is unbounded below, then the restriction of 
a Laplacian plus a potential may not be essentially self-adjoint even if the restriction
of the Laplacian part is. 
This can be seen in the following, see also Example~5.3.1 in \cite{CTT11}.

\begin{proposition}
Let $(\N_0,b,m)$ be a birth-death chain. Let $\varphi \in C_c(\N_0)$
and let $H_c$ act on $C_c(\N_0)$ as
$$ H_c \varphi(x) = \sum_{y \in X} w(x,y) (\varphi(x)-\varphi(y)) + W(x)\varphi(x)$$
with $w(x,y)=b(x,y)/ \sqrt{m(x)m(y)}$ for $x, y \in \N_0$
and
$$W(x)= \frac{1}{\sqrt{m(x)}}\sum_{y \in X} b(x,y) \left(\frac{1}{\sqrt{m(x)}}-\frac{1}{\sqrt{m(y)}} \right).$$
Then, $H_c$ is essentially self-adjoint on $\ell^2(\N_0,1)$ if and only if 
$$\sum_{r=0}^\infty \left( \sum_{k=0}^r \frac{1}{b(k,k+1)} \right)^2 m(r+1) = \infty.$$
\end{proposition}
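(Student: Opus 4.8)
The plan is to recognize $H_c$ as the image of the original birth-death Laplacian $L_c$ on $(\N_0,b,m)$ under a generalized ground state transform, and then to invoke Hamburger's criterion for $(\N_0,b,m)$ directly. Set $v(x)=1/\sqrt{m(x)}$, a strictly positive function on $\N_0$, and let $T_vf=vf$. Since $\sum_x (v(x)f(x))^2 m(x)=\sum_x f^2(x)$, the map $T_v$ is unitary from $\ell^2(\N_0,1)$ onto $\ell^2(\N_0,m)$ and restricts to a bijection of $C_c(\N_0)$ onto itself; this is exactly the isometry appearing in Proposition~\ref{p:potentials} in the case $mv^2\equiv 1$.

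The key identity is that $T_v^{-1}L_cT_v=H_c$ on $C_c(\N_0)$. For $\varphi\in C_c(\N_0)$ we have $v\varphi\in C_c(\N_0)$, so $(T_v^{-1}L_cT_v\varphi)(x)=v(x)^{-1}\De(v\varphi)(x)$. Writing $v(x)\varphi(x)-v(y)\varphi(y)=v(y)(\varphi(x)-\varphi(y))+\varphi(x)(v(x)-v(y))$ inside the definition of $\De$ gives the standard ground state identity
\[
\frac{1}{v(x)}\De(v\varphi)(x)=\frac{1}{v(x)m(x)}\sum_{y} b(x,y)v(y)(\varphi(x)-\varphi(y))+\frac{\De v(x)}{v(x)}\,\varphi(x),
\]
see \cite{HK11, KPP20, KLW21}. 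Now $1/(v(x)m(x))=1/\sqrt{m(x)}$ and $v(y)=1/\sqrt{m(y)}$ turn the first term into $\sum_y w(x,y)(\varphi(x)-\varphi(y))$ with $w(x,y)=b(x,y)/\sqrt{m(x)m(y)}$, while $\De v(x)/v(x)=\sqrt{m(x)}\,m(x)^{-1}\sum_y b(x,y)(m(x)^{-1/2}-m(y)^{-1/2})=W(x)$. Hence $T_v^{-1}L_cT_v=H_c$ on $C_c(\N_0)$. Since essential self-adjointness is preserved under unitary equivalence, $H_c$ on $C_c(\N_0)\subseteq\ell^2(\N_0,1)$ is essentially self-adjoint if and only if $L_c$ on $C_c(\N_0)\subseteq\ell^2(\N_0,m)$ is, and applying Hamburger's criterion, Theorem~\ref{t:bd_characterization}, to the chain $(\N_0,b,m)$ gives the stated equivalence.

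There is no genuinely hard analytic step; the only point requiring care is to apply Theorem~\ref{t:bd_characterization} to the \emph{original} data $(b,m)$ rather than to the transformed hopping weights $w$ together with the counting measure, since those would produce a different sum. The potential $W$ is precisely $\De v/v$ and absorbs exactly this discrepancy. For a transform-free variant one can instead note that $v'=\sqrt{m}>0$ satisfies $(\De_w+W)v'=0$, where $\De_w$ is the Laplacian of $(\N_0,w,1)$ with respect to counting measure, so $H_c\geq 0$ by the Allegretto--Piepenbrink theorem \cite{KPP20}; feeding this $v'$ into Proposition~\ref{p:potentials} and rearranging via $w(k,k+1)v'(k)v'(k+1)=b(k,k+1)$ and $v'(r+1)^2=m(r+1)$ recovers the same condition, modulo the bookkeeping needed to handle the lower bound of the form.
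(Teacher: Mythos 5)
Your argument is correct and is exactly the paper's proof: conjugation by the isometry $T_v$ with $v=m^{-1/2}$ identifies $H_c$ on $\ell^2(\N_0,1)$ with $L_c$ on $\ell^2(\N_0,m)$, and Hamburger's criterion (Theorem~\ref{t:bd_characterization}) applied to the original data $(b,m)$ finishes the argument. You simply supply the explicit product-rule computation verifying $T_v^{-1}L_cT_v=H_c$, which the paper leaves to the references \cite{Tor10, CTT11}; the transform-free variant at the end is unnecessary but consistent.
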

\begin{proof}
We follow the ideas of \cite{Tor10, CTT11}.
More specifically,  
letting $v = m^{-1/2}$ and using the isometry $T_v \colon \ell^2(\N_0,1) \longrightarrow \ell^2(\N_0,m)$ given by 
$T_{v}f(x)= f(x)/\sqrt{m(x)},$ it follows that $H_c$ on $C_c(\N_0) \subseteq \ell^2(\N_0,1)$ 
is unitarily equivalent to $L_c$ on $C_c(\N_0) \subseteq \ell^2(\N_0,m)$
as $H_c = T_v^{-1}L_c T_v$. Thus, the result
follows by Hamburger's criterion, Theorem~\ref{t:bd_characterization}.
\end{proof}

We note that the operator $H_c$ in the proposition above
can be thought of as the Laplacian on a birth-death chain 
with edge weights given by $w$ and $m=1$ plus the
potential $W$. 
As in the case of $m=1$, the Laplacian part is always essentially self-adjoint, it follows that
the possible failure of essential self-adjointness of $H_c$
depends solely on the potential term given by $W$.


These examples give a contrast
to Schr{\"o}dinger operators which are essentially self-adjoint regardless of the potential.
For example, for a birth-death chain, it can be shown by using stability results and limit point-limit circle theory that
the restriction of $ H f(x) = \sum_{y \in X} w(x,y) (f(x)-f(y)) + W(x)f(x)$ with 
$$w(x,y) = \frac{b(x,y)}{ \sqrt{m(x)m(y)}}$$
to $C_c(\N_0) \subseteq \ell^2(X,m)$ 
will be essentially self-adjoint if
$$\sum_{r=0}^\infty \frac{1}{w(r,r+1)}= \infty$$
regardless of $W$, for details see \cite{Ber68}.

Given the results above, we consider a full characterization of the essential self-adjointness of Schr{\"o}dinger
operators on birth-death chains to be an interesting problem.
We note that one can find an explicit formula for solutions $v$
to the equation $(\Delta + W)v=0$ in the difference equations literature, see
\cite{Mal98};
however, the formula is quite involved and does not seem to yield
a useful characterization for essential self-adjointness.

\subsection{Characterization for birth-death chains via capacity}
In this subsection we characterize the essential self-adjointness 
of the Laplacian on birth-death chains via a new notion of capacity.

\eat{
We recall that for a birth-death chain, the energy of a function
is given by
$$\Q(f)= \sum_{r=0}^\infty b(r,r+1) (f(r)-f(r+1))^2$$
and that the
set of functions of finite energy is denoted by
$$\D = \{ f \in C(X) \mid \Q(f) < \infty\}.$$}

We first define the second order Sobolev space $H^2(X,m)$ as
\begin{align*}
H^2(X,m)&= D(L_c^*) \cap \D \\
&= \{f \in \ell^2(X,m) \mid \Delta f \in \ell^2(X,m) \textup{ and } \Q(f)<\infty\}.
\end{align*}
We give the space $H^2(X,m)$ an inner product $\as{\cdot, \cdot}_{H^2}$ by
\[
\as{f, g}_{H^2}=\as{f, g} + \as{L_c^* f, L_c^* g}+ Q(f, g) 
\]
where $\as{\cdot, \cdot}$ denotes the standard inner product on $\ell^2(X,m)$.

\begin{remark}
We note that in \cite{KN23a}, in the context of quantum graphs, the above
space is referred to as the domain of the maximal Gaffney Laplacian.
\end{remark}

We next define the $(2, 2)$-capacity for a birth-death chain. We start with the set of test functions.
For $n \in \N_0$, we let
\begin{equation*}
\Cn_n=\{f \in H^2(\N_0,m) \mid f(r) = 1 \text{ for $r \ge n$}\}.
\end{equation*}

We note that any function that is constant from some vertex on has finite energy as the sum appearing
in the definition of the energy form is then finite.
However, the set $\Cn_n$ can be empty as the constant function $1$ may not be in $\ell^2(\N_0,m)$. 
In fact, on a birth-death chain direct arguments yield the following equivalences.
\begin{lemma}\label{l:test_functions}
Let $(\N_0,b,m)$ be a birth-death chain. The following statements are equivalent:
\begin{itemize}
\item[(i)] $\Cn_n \neq \emptyset$ for some $n \in \N_0$.
\item[(ii)] $\Cn_n \neq \emptyset$ for all $ n \in \N_0$.
\item[(iii)] $1 \in H^2(\N_0,m)$.
\item[(iv)] $m(\N_0) < \infty$.
\end{itemize}
\end{lemma}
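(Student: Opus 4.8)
The plan is to prove the cycle of implications $(iv)\Rightarrow(iii)\Rightarrow(ii)\Rightarrow(i)\Rightarrow(iv)$, since several of these steps are essentially trivial once $(iv)$ is available. First I would observe that $(iii)\Rightarrow(ii)$ is immediate: if $\mathbf 1\in H^2(\N_0,m)$, then $\mathbf 1\in\Cn_n$ for every $n$, so all the sets $\Cn_n$ are nonempty; and $(ii)\Rightarrow(i)$ is trivial. The bulk of the argument is therefore the two implications $(iv)\Rightarrow(iii)$ and $(i)\Rightarrow(iv)$.

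For $(iv)\Rightarrow(iii)$, assume $m(\N_0)<\infty$. Then the constant function $\mathbf 1$ is in $\ell^2(\N_0,m)$; it is harmonic, so $\De\mathbf 1 = 0\in\ell^2(\N_0,m)$; and its energy $\Q(\mathbf 1)=\tfrac12\sum_{x,y}b(x,y)(1-1)^2=0<\infty$. Hence $\mathbf 1\in D(L_c^*)\cap\D = H^2(\N_0,m)$, which is $(iii)$.

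For $(i)\Rightarrow(iv)$, suppose $\Cn_n\neq\emptyset$ for some $n$, and pick $f\in\Cn_n$, so $f\in\ell^2(\N_0,m)$ with $f(r)=1$ for all $r\ge n$. Then
\[
\sum_{r\ge n} m(r) = \sum_{r\ge n} f^2(r)m(r) \le \sum_{r\in\N_0} f^2(r)m(r) = \|f\|^2 < \infty,
\]
and since $m(\{0,1,\dots,n-1\})<\infty$ trivially, we conclude $m(\N_0)<\infty$, which is $(iv)$. This closes the cycle and proves all four statements equivalent. (Note this last step uses only $f\in\ell^2(\N_0,m)$ and the prescribed boundary behavior, not the full $H^2$ membership or the birth-death structure.)

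The only place where one must be mildly careful is in checking that a function which is eventually constant indeed lies in $H^2$ when $m(\N_0)<\infty$ — but as noted in the paragraph preceding the lemma, eventual constancy forces the energy sum to be finite, and square-summability together with $\De f\in\ell^2$ follow from $m(\N_0)<\infty$ and the local finiteness of the chain (each vertex has at most two neighbors, so $\De f$ is a finite combination of values of $f$ at each vertex and is eventually zero). So I do not expect any real obstacle here; the lemma is genuinely a matter of unwinding the definitions, exactly as the statement ``direct arguments yield'' suggests. If one wanted to be economical, one could instead prove $(iv)\Leftrightarrow(iii)$ and $(iii)\Leftrightarrow$[$(i)$ and $(ii)$] directly, but the four-step cycle above is the cleanest route.
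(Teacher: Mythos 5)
Your proof is correct and is precisely the kind of ``direct argument'' the paper alludes to without writing out: the paper states the lemma with no proof, and your cycle $(iv)\Rightarrow(iii)\Rightarrow(ii)\Rightarrow(i)\Rightarrow(iv)$ fills it in cleanly, with the only substantive steps being that $\mathbf 1\in H^2(\N_0,m)$ when $m(\N_0)<\infty$ and that any $f\in\Cn_n$ dominates the tail of the measure. Nothing is missing.
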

\eat{
The Cauchy boundary of a graph is usually defined with respect to a metric completion of the graph.
More specifically, given a metric on the set of vertices, we denote by $\ov{X}$ the metric completion
of $X$ and by $\partial_C X = \ov{X} \setminus X$, the \emph{Cauchy boundary} of $X$, see \cite{HKMW13}
for further discussion. However,
for the purposes of our discussion of birth-death chains, it suffices to not specify the metric
and just think of $\partial_C X$ as a point at infinity, i.e., $\partial_C X = \{\infty\}$.}

We next define a notion of capacity for a point at infinity.
\begin{definition}\label{d: capacity}
Let $(\N_0,b,m)$ be a birth-death chain.
The \emph{$(2, 2)$-capacity at infinity} is defined as
\[
\mathrm{Cap}_{2, 2}(\infty)=
\begin{cases}
\infty \quad &\text{if $\Cn_n=\emptyset$ for some $n \in \N_0$} \\
\lim_{n \to \infty} \inf_{f \in \Cn_n} \|f\|_{H^2}^2 &\text{otherwise}.
\end{cases} 
\]
\end{definition}

We will approximate the (2,2)-capacity  by the capacity taken over
a smaller set of test functions which vanish up to a certain point. More specifically, 
for $k < n$ we let
$$\Cn_{k, n}=\{f \in H^2(\N_0,m) \mid  f(r)=1\, \textup{ for } r \ge n, f(r)=0 \textup{ for } r \le k \}\subseteq \Cn_{n}$$
and
\begin{align*}
\mathrm{Cap}_{2, 2}^{k}(\infty)&=
\begin{cases}
\infty \quad &\text{if $\Cn_{k, n}=\emptyset$ for some $n\in \N_0$} \\
\lim_{n \to \infty}\inf_{f \in \Cn_{k, n}} \|f\|_{H^2}^2 &\text{otherwise}.
\end{cases} 
\end{align*}

We next show that the two notions of capacity introduced above can be compared
via a constant depending only on how far the function is required to be zero.
\begin{lemma}\label{l:key lemma}
Let $(\N_0,b,m)$ be a birth-death chain.
Let $k \in \N$. Then, there exists a constant $C=C(k)>0$ such that
\[
\mathrm{Cap}_{2, 2}(\infty) \le \mathrm{Cap}_{2, 2}^{k}(\infty) \le C \cdot \mathrm{Cap}_{2, 2}(\infty).
\]
\end{lemma}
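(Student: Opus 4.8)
The plan is to prove the two inequalities separately. The first, $\mathrm{Cap}_{2,2}(\infty)\le\mathrm{Cap}_{2,2}^{k}(\infty)$, is essentially formal. By Lemma~\ref{l:test_functions}, the condition $\Cn_n\neq\emptyset$ for some (equivalently, all) $n$ is equivalent to $m(\N_0)<\infty$, and under this hypothesis one checks directly that $\Cn_{k,n}\neq\emptyset$ as well: a function that vanishes on $\{0,\dots,k\}$, equals $1$ on $\{n,n+1,\dots\}$ and is arbitrary in between lies in $\ell^2(\N_0,m)$ (as $m(\N_0)<\infty$), has finite energy (being eventually constant) and has finitely supported Laplacian, hence lies in $H^2(\N_0,m)$. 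Since $\Cn_{k,n}\subseteq\Cn_n$, the ``emptiness'' alternatives in the two definitions therefore coincide, so either both capacities equal $\infty$, or both are (decreasing) limits of infima; in the latter case $\Cn_{k,n}\subseteq\Cn_n$ gives $\inf_{f\in\Cn_{k,n}}\|f\|_{H^2}^2\ge\inf_{f\in\Cn_n}\|f\|_{H^2}^2$ for each $n>k$, and letting $n\to\infty$ yields the claim.

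For the second inequality we may assume $m(\N_0)<\infty$, for otherwise both sides are $\infty$. The strategy is a cutoff argument. Fix $n>k$, let $\eta\in C(\N_0)$ be given by $\eta(r)=0$ for $r\le k$ and $\eta(r)=1$ for $r\ge k+1$, and for $f\in\Cn_n$ set $g=\eta f$. Then $g(r)=0$ for $r\le k$ and $g(r)=f(r)=1$ for $r\ge n$, so it suffices to show that $g\in H^2(\N_0,m)$, which then gives $g\in\Cn_{k,n}$, together with the quantitative bound
\[
\|g\|_{H^2}^2\le C(k)\,\|f\|_{H^2}^2
\]
for a constant $C(k)>0$ depending only on $k$ (through $b(k,k+1)$, $m(k)$ and $m(k+1)$). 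Granting these, $\inf_{g\in\Cn_{k,n}}\|g\|_{H^2}^2\le C(k)\inf_{f\in\Cn_n}\|f\|_{H^2}^2$ for all $n>k$, and letting $n\to\infty$ finishes the proof.

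To prove the bound, compare $g$ with $f$ coordinatewise. Since $g$ vanishes on $\{0,\dots,k\}$ and equals $f$ on $\{k+1,k+2,\dots\}$, we get $\|g\|^2\le\|f\|^2$ at once, and the energy is
\[
\Q(g)=b(k,k+1)f^2(k+1)+\sum_{r\ge k+1}b(r,r+1)(f(r)-f(r+1))^2\le b(k,k+1)f^2(k+1)+\Q(f),
\]
since the tail sum consists of terms of $\Q(f)$. For the Laplacian term, a direct computation on the birth-death chain shows $\Delta g(r)=0$ for $r\le k-1$, $\Delta g(r)=\Delta f(r)$ for $r\ge k+2$, $\Delta g(k)=-b(k,k+1)f(k+1)/m(k)$, and $\Delta g(k+1)=\Delta f(k+1)+b(k,k+1)f(k)/m(k+1)$. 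Squaring, weighting by $m$, summing, and using the elementary pointwise bound $f^2(j)\le\|f\|^2/m(j)$ (valid for $f\in\ell^2(\N_0,m)$) to estimate the contributions of $f(k)$ and $f(k+1)$, one obtains $\|L_c^*g\|^2\le 3\|L_c^*f\|^2+C'(k)\|f\|^2$ for some constant $C'(k)$; in particular $g\in D(L_c^*)$. Combining the three bounds gives the estimate with, e.g., $C(k)=\max\{3,\,1+C'(k)+b(k,k+1)/m(k+1)\}$.

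The argument is largely routine; the one point that needs care is the bookkeeping of the extra boundary terms produced by the discrete product rule at the two transition vertices $k$ and $k+1$, and the verification that each of these is absorbed into $\|f\|^2$ via the pointwise bound $f^2(j)\le\|f\|^2/m(j)$ rather than into an uncontrolled quantity — which is precisely the source of the dependence of $C(k)$ on $k$.
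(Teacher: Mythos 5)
Your proposal is correct and follows essentially the same route as the paper: the first inequality is the formal consequence of $\Cn_{k,n}\subseteq\Cn_n$, and the second is obtained by truncating $f\in\Cn_n$ to zero on $\{0,\dots,k\}$ and bounding the $\ell^2$ norm, the energy, and the Laplacian term of the truncation by $C(k)\|f\|_{H^2}^2$. The only (immaterial) difference is in the bookkeeping at the vertex $k+1$, where the paper keeps the cross term $b(k+1,k+2)(f(k+1)-f(k+2))^2$ and absorbs it into $\Q(f)$, while you write $\Delta g(k+1)=\Delta f(k+1)+b(k,k+1)f(k)/m(k+1)$ and absorb the correction into $\|f\|^2$ via $f^2(j)\le\|f\|^2/m(j)$; both yield a constant depending only on $k$ and the local weights.
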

\begin{proof}
By definition, we obtain $\mathrm{Cap}_{2, 2}(\infty) \le \mathrm{Cap}_{2, 2}^{k}(\infty)$
since $\Cn_{k,n} \subseteq \Cn_n$.
We will prove $\mathrm{Cap}_{2, 2}^{k}(\infty) \le C \cdot \mathrm{Cap}_{2, 2}(\infty)$ for some constant $C>0$
depending only on the choice of $k$ and the graph structure. 

Let $f \in \Cn_{n}$ for $n > k$. We define $\ow{f} \in \Cn_{k, n}$ by
\[
\ow{f}(r)=
\begin{cases}
f(r) \quad & \text{if } r \ge k+1 \\
0 \quad &\text{otherwise}.
\end{cases}
\]
Clearly, $\| \ow{f} \|\le \|f\|$ so that $\ow{f} \in \ell^2(X,m)$. 

Furthermore, 
\begin{align*}
\Q(\ow{f})&=\sum_{r=0}^\infty b(r, r+1)(\ow{f}(r)-\ow{f}(r+1))^2 \\
&=  b(k, k+1)f^2(k+1) + \sum_{r=k+1}^\infty b(r, r+1)(f(r)-f(r+1))^2  \\
&\le \dfrac{b(k,k+1)}{m(k+1)}f^2(k+1) m(k+1) + \Q(f) \\
&\le C_1 \| f\|_{H^2}^2
\end{align*}
where $C_1 = \max\{1,  \tfrac{b(k, k+1)}{m(k+1)}\}$.
In particular, we obtain $\ow{f} \in \D$.

Finally, direct calculations yield
\begin{equation*}
\Delta \ow{f}(r)=
\begin{cases}
0 \quad & \text{if } r < k \\
-\dfrac{b(k, k+1)}{m(k)}f(k + 1) \quad & \text{if } r=k \\
\Delta f(r) \quad & \text{if } r \ge k+2 \\
\end{cases}
\end{equation*}
and
$$\Delta \ow{f}(k+1)=\frac{b(k, k+1)f(k+1)  + b(k+1, k+2)\big(f(k+1)-f(k+2)\big)}{m(k+1)}.$$ 
Hence,
\begin{align*}
\| \Delta \ow{f} \|^2 &= \frac{b^2(k, k+1)}{m(k)} f^2(k+1)  \\
& \qquad  + \frac{\big| b(k, k+1)f(k+1) + b(k+1, k+2)(f(k+1) - f(k+2)) \big|^2}{m(k+1)} \\
& \qquad + \sum_{r=k+2}^\infty (\Delta f(r))^2m(r) \\
&\leq  \frac{b^2(k, k+1)}{m(k)m(k+1)} f^2(k+1) m(k+1)\\
& \qquad +  \frac{2 b^2(k, k+1)f^2(k+1) + 2 b^2(k+1, k+2)(f(k+1) - f(k+2))^2}{m(k+1)} \\
& \qquad + \| \Delta f \|^2 \\
&\leq \left( \frac{b^2(k, k+1)}{m(k)m(k+1)} + \frac{2 b^2(k, k+1)}{m^2(k+1)} \right) f^2(k+1)m(k+1) \\
& \qquad + \frac{2b(k+1, k+2)}{m(k+1)} \Q(f) + \| \Delta f \|^2 \\
& \leq C_2 \|f\|_{H^2}^2
\end{align*}
where $C_2 = \max \{1, \frac{2b(k+1, k+2)}{m(k+1)},  \frac{b^2(k, k+1)}{m(k)m(k+1)} + \frac{2 b^2(k, k+1)}{m^2(k+1)} \}.$
In particular, this establishes $\ow{f} \in H^2(X,m)$ and, thus, $\ow{f} \in \Cn_{k,n}.$

Hence, for every $f \in \Cn_n$ there exists $\ow{f} \in \Cn_{k,n}$ with
$$\|\ow{f}\|^2_{H^2} \leq C \|f\|^2_{H^2}$$
where $C = C(k)= \max\{ C_1, C_2\}$. 
Taking the infimum over $f \in \Cn_{n}$ and letting $n \to \infty$, we have $\mathrm{Cap}_{2, 2}^{k}(\infty) \le C \cdot \mathrm{Cap}_{2, 2}(\infty)$. This completes the proof.
\eat{
Using the inequality $(\alpha+\beta+\gamma)^2 \le 3(\alpha^2+\beta^2+\gamma^2)$, we obtain
\begin{align*}
|(\Delta \ow{u})(N_1+1)|^2m(N_1+1) &\le 3\dfrac{b(N_1, N_1+1)^2 + b(N_1+1, N_1+2)^2}{m(N_1+1)^2}|u(N_1+1)|^2m(N_1+1)  \\
&\quad + 3 \dfrac{b(N_1+1, N_1+2)^2}{m(N_1+1)m(N_2+1)}|u(N_1+2)|^2m(N_1+2).
\end{align*}
Combining the above inequality and \eqref{eq:direct calculation}, 
\begin{align*}
\|\Delta \ow{u}\|_{\ell^2}^2&=\sum_{x=N_1}^\infty |\Delta \ow{u}(x)|^2\,m(x) \\
&\le \dfrac{b(N_1, N_1+1)^2}{m(N_1)m(N_1+1)} |u(N_1+1)|^2m(N_1+1) \\
&\quad +3\dfrac{b(N_1, N_1+1)^2 + b(N_1+1, N_1+2)^2}{m(N_1+1)^2}|u(N_1+1)|^2m(N_1+1) \\
&\quad +3 \dfrac{b(N_1+1, N_1+2)^2}{m(N_1+1)m(N_1+2)}|u(N_1+2)|^2m(N_1+2) \\
&\quad + \sum_{x=N_1+2}^\infty |(\Delta u)(x)|^2m(x) \\
&\le C(\|u\|_{\ell^2}^2+\|\Delta u\|_{\ell^2}^2)
\end{align*}
where $C=C(N_1, b, m) \ge 1$ is a positive constant. }
\end{proof}

We now have all of the ingredients to prove our characterization of essential self-adjointness in 
terms of capacity.

\begin{theorem}[Characterization via capacity]\label{t:bd_characterization2}
Let $(\N_0,b,m)$ be a birth-death chain. Then, the following statements are equivalent:
\begin{itemize}
\item[(i)] $L_c$ is essentially self-adjoint.
\item[(ii)] $\mathrm{Cap}_{2, 2}(\infty)=0$ or $\mathrm{Cap}_{2, 2}(\infty)=\infty$.
\end{itemize}
\end{theorem}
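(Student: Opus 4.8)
The plan is to deduce the theorem from Hamburger's criterion (Theorem~\ref{t:bd_characterization}), so that it is enough to show $L_c$ is essentially self-adjoint if and only if $\mathrm{Cap}_{2, 2}(\infty)\in\{0,\infty\}$. First I would settle the case $m(\N_0)=\infty$: here $\Cn_n=\emptyset$ for all $n$ by Lemma~\ref{l:test_functions}, so $\mathrm{Cap}_{2, 2}(\infty)=\infty$ and (ii) holds, while $\sum_{r=0}^\infty(\sum_{k=0}^r 1/b(k,k+1))^2 m(r+1)\ge b(0,1)^{-2}\sum_{r\ge 1}m(r)=\infty$, so (i) holds by Theorem~\ref{t:bd_characterization}. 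From now on assume $m(\N_0)<\infty$. Then $1\in H^2(\N_0,m)$, so $1\in\Cn_n$ for every $n$ and $\mathrm{Cap}_{2, 2}(\infty)\le\|1\|_{H^2}^2=m(\N_0)<\infty$; also $n\mapsto\inf_{f\in\Cn_n}\|f\|_{H^2}^2$ is non-increasing since $\Cn_n\subseteq\Cn_{n+1}$, so $\mathrm{Cap}_{2, 2}(\infty)$ is the infimum of these numbers over $n$. Hence (ii) reduces to $\mathrm{Cap}_{2, 2}(\infty)=0$, and the claim becomes: $L_c$ is essentially self-adjoint if and only if $\mathrm{Cap}_{2, 2}(\infty)=0$.

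For the implication ``essentially self-adjoint $\Rightarrow\mathrm{Cap}_{2, 2}(\infty)=0$'' I would \emph{not} build test functions by hand but read them off from the definition of essential self-adjointness. Since $L_c$ is essentially self-adjoint, $D(\ov{L}_c)=D(L_c^*)$, so $1\in D(L_c^*)$ lies in $D(\ov{L}_c)$ with $\ov{L}_c 1=L_c^*1=\De 1=0$; hence there exist $\varphi_n\in C_c(\N_0)$ with $\varphi_n\to 1$ and $\De\varphi_n=L_c\varphi_n\to 0$ in $\ell^2(\N_0,m)$. Put $\psi_n=1-\varphi_n$. Since $\varphi_n$ is finitely supported, $\psi_n$ equals $1$ for all sufficiently large $r$, say for $r\ge N_n$, so $\psi_n\in\Cn_{N_n}$; moreover $\psi_n\in H^2(\N_0,m)$ because $\psi_n\in\ell^2(\N_0,m)$, $\De\psi_n=-\De\varphi_n\in\ell^2(\N_0,m)$, and $\Q(\psi_n)=\Q(\varphi_n)<\infty$. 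By Green's formula $\Q(\varphi_n)=\as{L_c\varphi_n,\varphi_n}\le\|\De\varphi_n\|\,\|\varphi_n\|\to 0$, so $\|\psi_n\|_{H^2}^2=\|1-\varphi_n\|^2+\|\De\varphi_n\|^2+\Q(\varphi_n)\to 0$, and therefore $\mathrm{Cap}_{2, 2}(\infty)\le\|\psi_n\|_{H^2}^2\to 0$.

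For the converse I would argue by contraposition: if $L_c$ is \emph{not} essentially self-adjoint, then by Theorem~\ref{t:bd_characterization} we have $\sum_{r=0}^\infty(\sum_{k=0}^r 1/b(k,k+1))^2 m(r+1)<\infty$. Let $v(0)=0$ and $v(r)=\sum_{k=0}^{r-1}1/b(k,k+1)$ for $r\ge 1$; by Lemma~\ref{l:harmonic}, $\De v(r)=0$ for $r\ne 0$ while $\De v(0)=-1/m(0)$, so $\De v$ is finitely supported and $v\in\ell^2(\N_0,m)$ by the convergence just recorded, whence $v\in D(L_c^*)$. Now fix $k=1$ and let $f\in\Cn_{1, n}$ with $n\ge 2$, so that $f(0)=f(1)=0$, $f(r)=1$ for $r\ge n$, and $f\in H^2(\N_0,m)\subseteq D(L_c^*)$. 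A discrete Green's (Abel summation) identity telescopes $\sum_{r=0}^N(\De f(r)v(r)-f(r)\De v(r))m(r)$ to the single boundary term $-b(N,N+1)(v(N)-v(N+1))$, which equals $1$ for every $N\ge n$ since $f(N)=f(N+1)=1$ and $v(N+1)-v(N)=1/b(N,N+1)$; letting $N\to\infty$ and using $\as{f,\De v}=-f(0)=0$ gives $\as{\De f,v}=[f,v]_{L_c^*}=1$. Cauchy--Schwarz then yields $1\le\|\De f\|\,\|v\|$, so $\|f\|_{H^2}^2\ge\|\De f\|^2\ge\|v\|^{-2}$ for all such $f$; hence $\mathrm{Cap}_{2, 2}^{1}(\infty)\ge\|v\|^{-2}>0$, and $\mathrm{Cap}_{2, 2}(\infty)>0$ by Lemma~\ref{l:key lemma}. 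Combining this with the previous paragraph and the case $m(\N_0)=\infty$ completes the argument.

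I expect the main obstacle to be the implication ``essentially self-adjoint $\Rightarrow\mathrm{Cap}_{2, 2}(\infty)=0$''. A direct construction of cutoffs --- say the energy-minimizing (harmonic) interpolation from $0$ to $1$ over a long window of resistance $R$ --- runs aground because the second-order term $\|\De f\|^2$ picks up contributions of order $1/(m(N)R^2)$ at the corner near the right endpoint $N$, and divergence of the Hamburger sum does not force $m(N)R^2\to\infty$. Extracting $\psi_n=1-\varphi_n$ from a sequence $\varphi_n\in C_c(\N_0)$ that witnesses $1\in D(\ov{L}_c)$ circumvents this entirely.
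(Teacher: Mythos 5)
Your proposal is correct and follows essentially the same route as the paper: the forward direction extracts cutoff functions $1-\varphi_n$ from $1\in D(\ov{L}_c)$ exactly as in the paper's proof, and the converse is the paper's contrapositive argument with the same test function $h(r)=\sum_{j<r}1/b(j,j+1)$, the same identity $\as{\De f,h}=1$, Cauchy--Schwarz, and Lemma~\ref{l:key lemma}. The only differences are cosmetic (fixing $k=1$, handling $m(\N_0)=\infty$ up front, and deriving the Green identity by Wronskian telescoping rather than the paper's rearrangement).
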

\begin{proof}
(i) $\Longrightarrow$ (ii): We assume that $L_c$ is essentially self-adjoint.  
If $m(\N_0) = \infty$, then $\mathrm{Cap}_{2, 2}(\infty)=\infty$ 
by Lemma~\ref{l:test_functions} and the definition of capacity. 
Thus, we let $m(\N_0) < \infty$ so that
$1 \in H^2(\N_0,m)$ where $1$ is the constant function which takes the value $1$
on every vertex.

As $L_c$ is essentially self-adjoint, we get $L_c^*=\ov{L}_c$.
As $1 \in H^2(\N_0, m) \subseteq D(L_c^*)=D(\ov{L}_c)$, it follows that
there exists a sequence of functions $\varphi_k \in C_c(X)$ such that $\varphi_k \to 1$
and $L_c^* \varphi_k \to L_c^* 1 =0$ as $k \to \infty$ in $\ell^2(\N_0,m)$.
Putting $f_k=1-\varphi_k$, we note that 
$f_k \in \Cn_{N_k}$ for some $N_k$ and, thus, $f_k \in \Cn_i$ for all $i \geq N_k$. Hence,
we may choose $N_k \to \infty$ and reindex so that $f_k = f_{N_k} \in \Cn_{N_k}$.
Furthermore, $\| f_k \| \to 0$ and
$\| \Delta f_k \| \to 0$ as $k \to \infty$.

By direct calculations and the Cauchy-Schwarz inequality we now obtain the following variant of
Green's formula where all appearing sums are finite
\begin{align*}
\Q(f_k)&=\sum_{r=0}^\infty b(r, r+1)(f_k(r)-f_k(r+1))^2 \\
&=\sum_{r=0}^\infty b(r, r+1)(f_k(r)-f_k(r+1))f_k(r) \\
& \qquad + \sum_{r=1}^\infty b(r, r-1)(f_k(r)-f_k(r-1))f_k(r) \\
&=\as{\Delta f_k, f_k} \le \|\Delta f_k\|   \|f_k\|  \to 0 
\end{align*}
as $k \to \infty$.
Hence, as $f_k = f_{N_k} \in \Cn_{N_k}$, we have
\[
\mathrm{Cap}_{2, 2}(\infty) \le \lim_{k \to \infty} \| f_k\|_{H^2}^2=0
\]
which gives (i) $\Longrightarrow$ (ii).

\bigskip 

(ii) $\Longrightarrow$ (i): Assume that $L_c$ is not essentially self-adjoint which, by Theorem~\ref{t:bd_characterization}, 
is equivalent to
\[\sum_{r=0}^\infty \left( \sum_{j=0}^{r} \frac{1}{b(j,j+1)} \right)^2 m(r+1) < \infty.\] 
The above inequality implies $m(\N_0)<\infty$ which gives that $\Cn_n \neq \emptyset $ for all $n \in \N_0$
and $\mathrm{Cap}_{2, 2}(\infty) < \infty$ by Lemma~\ref{l:test_functions}
and the definition of capacity.

Fix $k \in \N_0$. 
We will prove $\mathrm{Cap}_{2, 2}^{k}(\infty)>0$ which implies $\mathrm{Cap}_{2, 2}(\infty)>0$ by Proposition~\ref{l:key lemma}. To prove $\mathrm{Cap}_{2, 2}^{k}(\infty)>0$ it suffices 
to show that there exists a constant $C>0$ such that $C \le \|\Delta f\|$ for all $f \in \Cn_{k, n}$ where $n > k$.

Let $f \in \Cn_{k, n}$ for $n >k$ and let
$$h(r)=\sum_{j=0}^{r-1} \frac{1}{b(j,j+1)}$$
for $r \geq 1$ with $h(0)=0$. 
We note that $h\in \ell^2(\N_0,m)$
by the assumption that $L_c$ is not essentially self-adjoint.
Furthermore, as $f \in \Cn_{k,n}$ is constant everywhere except
for a finite set, 
by direct calculations we obtain the following variant of Green's formula where all appearing sums
are finite
\begin{align*}
&\sum_{r=0}^{\infty}b(r, r+1) (f(r)-f(r+1)) (h(r)-h(r+1)) \\
&=\sum_{r=0}^{\infty}b(r, r+1) (f(r)-f(r+1)) h(r) + \sum_{r=1}^{\infty}b(r,r-1) (f(r)-f(r-1)) h(r) \\
&
=\as{\Delta f, h}.
\end{align*}
Hence, as $f(0)=0$, $f(n)=1$ and $h(r+1)-h(r) = 1/b(r,r+1)$, we obtain
\begin{align*}
1=f(n)&=\sum_{r=0}^{n-1}(f(r+1)-f(r)) \\
&=\sum_{r=0}^{n-1}b(r, r+1) (f(r+1)-f(r)) (h(r+1)-h(r)) \\
&=\sum_{r=0}^{\infty}b(r, r+1) (f(r)-f(r+1)) (h(r)-h(r+1)) \\
&=\sum_{r=0}^{\infty}\Delta f(r) h(r) m(r).
\end{align*}
By the Cauchy-Schwarz inequality, we now get 
\begin{align*}
1&=\left| \sum_{r=0}^{\infty}\Delta f(r) h(r) m(r)\right| \\
&\le \left( \sum_{r=0}^{\infty} \left( \sum_{j=0}^{r-1} \frac{1}{b(j,j+1)} \right)^2 m(r) \right)^{1/2} \cdot \left( \sum_{r=0}^{\infty}(\Delta f(r))^2m(r)\right)^{1/2} \\
&\le C\|\Delta f\|
\end{align*}
where $C^2=\sum_{r=0}^\infty \left( \sum_{j=0}^{r-1} \frac{1}{b(j,j+1)} \right)^2 m(r)<\infty$. This completes the proof.
\end{proof}

\eat{
From the preceeding considerations, we also get an immediate consequence for the transient case.
\begin{corollary}
Let $(\N_0,b,m)$ be a transient birth-death chain.
Then, 
$$L_c \textup{ is 
essentially self-adjoint } \iff \mathrm{Cap}_{2, 2}(\infty)=\infty.$$
\end{corollary}
\begin{proof}
The statement is immediate from Theorem~\ref{t:bd_characterization2} and
Corollary~\ref{c:transient}. 
\end{proof}}

\subsection{Connections with other criteria}
We now give a series of remarks to put the two characterizations
of essential self-adjointness of the Laplacian on birth-death chains presented
above into context. In particular, we give an overview of recently
appearing results
concerning essential self-adjointness, discuss another capacity
for graphs,
and then discuss weakly spherically symmetric graphs.

\begin{remark}[Recent results for essential self-adjointness via edge weights, 
vertex measure and intrinsic metrics]\label{rem:intrinsic}
As mentioned already, 
Hamburger's test is classically known \cite{Ham20a, Ham20b, Ak65, EK18}. 
Here, we discuss  
some recently appearing results for essential self-adjointness
of the Laplacian as applied to birth-death chains. 

\eat{For the most part, these take the form of sufficient conditions
for essential self-adjointness and are given in terms of conditions
on all infinite paths in a graph. In the case of birth-death chains, this can be reduced to a condition on the 
single infinite path.
The failure of essential self-adjointness is usually established via the failure of form uniqueness which 
we discuss in a subsequent remark.}

We start with \cite{KL12} which establishes that if 
$$\sum_{r=0}^\infty m(r)= \infty,$$ 
then $L_c$ on a birth-death chain is
essentially self-adjoint. This extends a result found for the case of counting measure in \cite{Jor08, JP11, Web10, Woj08}.
In \cite{Gol14}, this is improved to the condition that
$$\sum_{r=0}^\infty \left(\prod_{k=0}^r\left(1 + \frac{1}{\Deg(k)}\right)^2 \right) m(r+1)=\infty,$$
implies that $L_c$ is essentially self-adjoint, see also \cite{GKS16, Sch20}.

A different line of argumentation is pursued in \cite{HKMW13}. There it is shown that if the weighted degree functions is bounded on distance balls
defined with respect to an intrinsic metric, then $L_c$ is essentially self-adjoint.
In particular, this is the case if
all infinite geodesics
have infinite length with respect to an intrinsic path metric. 

The condition that all infinite geodesics have infinite length can be understood as a geodesic completeness assumption.
In the setting of birth-death chains, geodesic completeness with respect to an intrinsic path metric means that all symmetric functions
$$\si \colon \N_0 \times \N_0 \longrightarrow [0,\infty)$$ 
with $\si(x,y)>0$
if and only if $|x-y|=1$ and satisfying  
$$b(r,r+1)\si^2(r,r+1)+b(r,r-1)\si^2(r,r-1) \leq m(r),$$ 
must satisfy
$$\sum_{r=0}^\infty \si(r,r+1)=\infty.$$ The quantity $\si(r,r+1)$ is then considered to be the length of the edge connecting
vertices $r$ and $r+1$. For more information about intrinsic metrics and geodesic completeness see
\cite{FLW14, Kel15, KLW21, KM19, Woj21}.

Finally, we mention \cite{Sch20}. There it is shown in the special case of $b(k,k+1)=1$
and $m(r)=r^{-\al}$, that $L_c$ is essentially self-adjoint for $\al \leq 2$, not essentially self-adjoint for $\al>3$
and the case of $2 < \al \leq 3$ is left as an open question. 
From Theorem~\ref{t:bd_characterization}
we note that $L_c$ is essentially self-adjoint for $2 < \al \leq 3$ which completes the picture.
\end{remark}

\eat{
\begin{remark}[Recurrence, stochastic completeness, form uniqueness and the condition in Theorem~\ref{t:bd_characterization}]
We compare the non-summability condition found in Theorem~\ref{t:bd_characterization} to conditions
for other properties on birth-death chains. In particular, we will compare the condition found above to known
characterizations for recurrence, stochastic completeness and form uniqueness for birth-death chains.

We start with recurrence. Here, it is well-known that a birth-death chain is recurrent if and only if
$$\sum_{r=0}^\infty \frac{1}{b(r,r+1)}=\infty,$$
see \cite{Woe09} for a proof. We note that $m$ plays no measure in this criterion.
The second property is stochastic completeness.
Stochastic completeness of birth-death chains is equivalent to
$$\sum_{r=0}^\infty \frac{m(B_r)}{b(r,r+1)} =\infty$$
where $B_r =\{0, 1, 2, \ldots, r\}$, see \cite{KLW13} for a proof. In particular, we note that
recurrence always implies stochastic completeness and that neither property can be related
to essential self-adjointness.

Finally, we mention form uniqueness. As essential self-adjointness deals with the uniqueness of self-adjoint
extensions of $L_c$, form uniqueness deals with the uniqueness of extensions whose corresponding
form is a Dirichlet form. In particular, essential self-adjointness always implies form uniqueness, 
see \cite{HKLW12, KLW21, Sch20} for more details. For form uniqueness on birth-death chains we mention that if $m(\N_0)=\infty$, then $L_c$
is essentially self-adjoint and thus form uniqueness holds. If $m(\N_0)<\infty$, then form uniqueness, recurrence
and stochastic completeness are all equivalent, see \cite{GHKLW15, Sch17} for more details. In particular, for the case when
$m(\N_0)<\infty$, it follows that essential self-adjointness implies all of these properties. Thus, one can use the 
results in the previous remark to give examples where essential self-adjointness fails.

Let us also mention the connection between all of these properties and $\lm$-harmonic functions. As discussed previously,
essential self-adjointness is equivalent to the triviality of $\lm$-harmonic functions which are in $\ell^2(X,m)$ for $\lm<0$.
Similarly, stochastic completeness is equivalent to the triviality of such functions which are bounded instead of in $\ell^2(X,m)$
while form uniqueness to those which are in $\ell^2(X,m)$ and have finite energy, see \cite{HKLW12}. 
For recurrence the situation is a bit different, as recurrence is equivalent to the constancy of positive superharmonic functions,
see \cite{Sch17, Woe00}. The difficulty with using the $\lm$-harmonic characterization for essential self-adjointness
is that $\lm<0$ causes the measure to appear in various places when analyzing generalized eigenfunctions. In the proof
of Theorem~\ref{t:bd_characterization} we circumvent this by reducing to the case of harmonic functions
via our process of doubling and using general theory from \cite{HMW21, Tes00}.
\end{remark}}

\begin{remark}[Capacity and form uniqueness] We mention here a criterion 
from \cite{HKMW13} for form uniqueness of birth-death
chains in terms of a different capacity than the one appearing in Theorem~\ref{t:bd_characterization2}. 
For this, we assume that the birth-death has finite measure, i.e., that $m(\N_0)<\infty$ as, otherwise, the Laplacian is essentially
self-adjoint and, thus, form uniqueness holds.

Now, given a strongly intrinsic path metric and defining the Cauchy boundary as $\partial_C X =
\ov{X} \setminus X$, where the closure is taken with respect to this metric, 
then in \cite{HKMW13} the capacity of the Cauchy boundary is defined similarly
to the above except for the use of functions in the larger set $\D \cap \ell^2(X,m)$
instead of $H^2(X,m) = \D \cap D(L_c^*)$ and removing the Laplacian term. 
In \cite{HKMW13}, it is then shown that
form uniqueness is equivalent to this capacity of the Cauchy boundary being zero.
We note that this result is proven by relying on the Markov property of
$\D \cap \ell^2(X,m)$, i.e., that the space is compatible with normal
contractions. As the Markov property does not hold for $H^2(X,m)$,
the techniques of  \cite{HKMW13} do not apply.
We overcome this difficulty by applying Hamburger's criterion to obtain our capacity result.
\end{remark}

\begin{remark}[Weakly spherically symmetric graphs]
Finally, we mention the case of weakly spherically symmetric graphs. These are graphs that enjoy a certain
symmetry property about a fixed vertex or set, see \cite{KLW13, KLW21, BG15}. In particular, birth-death chains
are the easiest examples of such graphs and thus serve as a toy model. 
Properties such as the Feller property, the $\ell^1$-Liouville property,
recurrence and stochastic completeness have all 
been characterized for such graphs,
see \cite{AS23, Woe09, KLW13, Woj17, KLW21}. \eat{The analysis of properties such as recurrence, stochastic
completeness and form uniqueness discussed above on weakly spherically symmetric graphs can often be reduced
to studying these properties on birth-death chains. The main technique for this reduction is to average generalized eigenfunctions
over spheres. However, in order for this to work, we have to know that the starting eigenfunctions can be taken to be
non-negative. This works for the characterizations of recurrence and stochastic completeness
by considering positive subsolutions, see \cite{HKLW12, Woe00}. However, for essential self-adjointness, we cannot
consider positive subsolutions and thus we are not able to extend the characterization
above to such graphs in such an easy way. }

It would be interesting to extend the characterizations of essential self-adjointness presented in 
Theorems~\ref{t:bd_characterization}~and~\ref{t:bd_characterization2}
to such graphs.
One approach to start to achieve this would be to express the Laplacian on 
certain weakly spherically symmetric
graphs as a direct sum of Jacobi matrices as in \cite{BK13}.
However, in order to use this approach, we would need a workable characterization for
the essential self-adjointness
of Schr{\"o}dinger operators on birth-death chains.
As current formulations are quite complicated, 
a characterization of the essential self-adjointness of $L_c$ on 
weakly spherically symmetric graphs seems to be an interesting open problem.
\end{remark}

\subsection{Necessity of assumptions for stability}
We now use the characterizations above and a general construction to 
give a family of examples that show that some assumptions are needed for a stability result
of the type found in Theorem~\ref{t:stability}.

We first show that if we start with any 
graph and attach a single additional vertex to every vertex,
then there is a choice of edge weights and vertex measure for the new vertices so that the Laplacian on the supergraph is always
essentially self-adjoint. This construction is in the spirit of one found in \cite{KL12} although we simplify the construction a bit
as we only attach a single vertex instead of infinite paths.

\begin{theorem}\label{t:completing}
Let $(X, b, m)$ be a weighted graph satisfying (\ref{A}). Then
there exists an induced supergraph $(\tX, \tb, \tm)$ of $(X,b,m)$
satisfying  (\ref{A})
such that $\widetilde{L}_c$ is essentially self-adjoint.
\end{theorem}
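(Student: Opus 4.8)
The plan is to build $(\tX,\tb,\tm)$ by attaching one new vertex to each vertex of $X$ with carefully chosen weights, and then use the stability result Theorem~\ref{t:stability} together with Hamburger's criterion Theorem~\ref{t:bd_characterization}. Concretely, let $X' = \{x' \mid x \in X\}$ be a disjoint copy of $X$, set $\tX = X \cup X'$, let $\tb$ restrict to $b$ on $X\times X$, and connect $x$ to $x'$ by a single edge of weight $\tb(x,x')=\beta(x)>0$ to be determined, with $\tb$ zero on all other new pairs. For the measure, put $\tm|_X = m$ and $\tm(x')=\mu(x)>0$, also to be determined. This is by construction an induced supergraph of $(X,b,m)$. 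Condition (\ref{A}) for $\tX$ reduces, at a vertex $x\in X$, to the original (\ref{A}) plus the single extra term $\beta^2(x)/\mu(x)<\infty$, and at a pendant vertex $x'$ to $\beta^2(x)/m(x)<\infty$; both are automatic once $\beta,\mu$ are finite, so (\ref{A}) holds for $\tX$ regardless of the choice.

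Next I would set $X_1 = X$ and $X_2 = X'$ in the notation of Section~\ref{s:stability}, so $X_3 = \pt X_1 \cup \pt X_2 = X \cup X'$ and $b_\partial$ consists exactly of the pendant edges. Then $\Deg_\partial(x) = \beta(x)/m(x)$ for $x\in X$ and $\Deg_\partial(x') = \beta(x)/\mu(x)$ for $x'\in X'$. To apply Theorem~\ref{t:stability} we need $\Deg_\partial$ bounded on $X_3$, so I choose $\beta$ small relative to both $m$ and $\mu$; e.g.\ it suffices that $\beta(x)\le m(x)$ and $\beta(x)\le \mu(x)$ for all $x$, giving $\Deg_\partial\le 1$. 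Under this assumption Theorem~\ref{t:stability} says $\widetilde L_c$ is essentially self-adjoint if and only if both $L_{c,1}$ (the Neumann Laplacian on the induced subgraph $(X,b,m)$, which is just $L_c$) and $L_{c,2}$ (the Laplacian on $(X',b_2,m_2)$) are essentially self-adjoint. But $(X',b_2,m_2)$ has \emph{no edges} — each pendant vertex is isolated in the $b_2$-graph — so $L_{c,2}$ is already self-adjoint (it is the zero operator on $\ell^2(X',m_2)$, or a direct sum of operators on one-dimensional spaces), hence trivially essentially self-adjoint. This shows $\widetilde L_c$ is essentially self-adjoint if and only if $L_c$ is, which is \emph{not} what we want.

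So the pendant-vertex construction with the stability assumption in force does not help; the point of the example is precisely that a stability result \emph{fails} when $\Deg_\partial$ is unbounded. Therefore the real construction must deliberately make the boundary degree large so as to "complete" the graph, and I would instead choose $\beta(x)$ comparable to, say, $m(x)\,\Deg_\partial$-type quantities — following the spirit of \cite{KL12} — so that a harmonic-function or square-summability argument directly on $\tX$ forces essential self-adjointness. The mechanism I expect to use is the one from Proposition~\ref{p:potential_help}: along any infinite path in $\tX$, the weights can be tuned so that any $\lambda$-harmonic $\ell^2$ function (for $\lambda<0$) would have to satisfy a growth lower bound incompatible with square summability, forcing it to vanish, whence $\ker(\widetilde L_c^*-\lambda)=\{0\}$ and $\widetilde L_c$ is essentially self-adjoint. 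The main obstacle, and the step needing care, is choosing $\beta$ and $\mu$ so that simultaneously (\ref{A}) holds for $\tX$, the induced subgraph $(X,b,m)$ is genuinely recovered (weights unchanged), and the extra pendant edges supply enough "mass" at infinity along every end of $\tX$ to kill all $\ell^2$ $\lambda$-harmonic functions; balancing the $\mu(x)$ against $\beta(x)$ so the divergence of the relevant series is guaranteed for every infinite path, not just paths staying in $X$, is the crux.
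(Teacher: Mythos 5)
Your construction (a pendant vertex attached to each vertex of $X$) and your diagnosis that the stability theorem cannot be used here are both exactly right, and they match the paper: the paper attaches a single new vertex $\ow{x}_k$ to each $x_k$ and proves essential self-adjointness directly by killing $\lm$-harmonic $\ell^2$ functions for $\lm<0$, in the style of Proposition~\ref{p:potential_help}. But your proposal stops precisely at the step that does the work, and the difficulty you flag as ``the crux'' --- guaranteeing divergence of the relevant series along \emph{every} infinite path --- points in the wrong direction. You cannot tune $\beta$ and $\mu$ to control the growth of $v$ along arbitrary paths in $X$, since the edge weights and the measure on $X$ are frozen by the requirement that $(X,b,m)$ be an induced subgraph.

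The missing idea is that the divergence is harvested at the pendant vertices, not along the path. If $\ow{\De}v=\lm v$ with $\lm<0$, the equation at the degree-one vertex $\ow{x}_k$ pins its value to that of its unique neighbor:
$$v(\ow{x}_k)=\frac{\tDeg(\ow{x}_k)}{\tDeg(\ow{x}_k)-\lm}\,v(x_k), \qquad \tDeg(\ow{x}_k)=\frac{\tb(x_k,\ow{x}_k)}{\tm(\ow{x}_k)}.$$
The path argument of Proposition~\ref{p:potential_help}, run on $\tX$, then produces from any vertex where $v>0$ an infinite non-repeating path $(x_{n_k})$ lying \emph{in $X$} along which $v$ is increasing, hence $v(x_{n_k})\ge v(x_{n_1})>0$; the identity above converts this into the lower bound $v(\ow{x}_{n_k})\ge \frac{\tDeg(\ow{x}_{n_k})}{\tDeg(\ow{x}_{n_k})-\lm}\,v(x_{n_1})$ at the pendants hanging off that path. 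Choosing, say, $\tb(x_k,\ow{x}_k)=\tm(\ow{x}_k)=1$ makes $\tDeg(\ow{x}_k)=1$, so every term of $\sum_k v^2(\ow{x}_{n_k})\tm(\ow{x}_{n_k})$ is bounded below by the constant $(1-\lm)^{-2}v^2(x_{n_1})>0$ and the sum diverges for every such path, with no reference whatsoever to the geometry of $X$. This forces $v=0$, hence $\ker(\widetilde{L}_c^*-\lm)=\{0\}$, and condition (\ref{A}) for $\tX$ is immediate since each new vertex has measure $1$. Without this observation your proposal remains a plan rather than a proof.
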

\begin{proof}
Let $X= \{x_k\}_{k=0}^\infty$ denote an enumeration of the vertices of $X$. 
We let $\tX= X \cup \{\ow{x}_k\}_{k=0}^\infty$ and extend $b$
to $\tb$ by letting $\tb\colon \tX \times \tX \longrightarrow [0,\infty)$ by
$\tb \vert_{X \times X} =b$, $\tb(x_k,\ow{x}_k) = \tb(\ow{x}_k, x_k)>0$
and $0$ otherwise. We also extend $m$ to $\tm$ so that $\tm \vert_X = m$.
Thus, $(\tX, \tb, \tm)$ is an induced supergraph of $(X,b,m)$ where
we attach a single additional vertex to every vertex of $X$.

Consider $\ow{\Delta}v = \lm v$ for $\lm<0$ where $\ow{\Delta}$
denotes the formal Laplacian on $(\tX, \tb, \tm)$. We claim
that if 
$$\sum_{k=0}^\infty \left( \frac{\tDeg(\ow{x}_k)}{\tDeg(\ow{x}_k)-\lm}\right)^2 \tm(\ow{x}_k) = \infty,$$
then $v \not \in \ell^2(\tX, \tm)$ unless $v=0$. Here, $\tDeg(\ow{x}_k)= \tb(\ow{x}_k,x_k)/\tm(\ow{x}_k)$
is the weighted degree of $\ow{x}_k$.

If $v(\ow{x}_k)=0$, then from $\ow{\Delta}v(\ow{x}_k) = \lm v(\ow{x}_k)=0$
we get $v(x_k)=0$. Hence, if $v(\ow{x}_k)=0$ for all $k$, then $v=0$.

Now, let $v(\ow{x}_k)>0$ for some $k$. 
From $\ow{\Delta}v(\ow{x}_k) = \lm v(\ow{x}_k)$
we get
$$v(\ow{x}_k) = \left( \frac{\tDeg(\ow{x}_k)}{\tDeg(\ow{x}_k)-\lm}\right) v(x_k).$$
In particular, $v(x_k)> v(\ow{x}_k)>0$. Let $n_1=k$. From
$\ow{\Delta}v(x_{n_1}) = \lm v(x_{n_1}) < 0$
we obtain the existence of a neighbor $x_{n_2} \in X$ of $x_{n_1}$ such that
$v(x_{n_2}) > v(x_{n_1})$. From $\ow{\Delta}v(\ow{x}_{n_2}) = \lm v(\ow{x}_{n_2})$ we now obtain
$$v(\ow{x}_{n_2}) = \left( \frac{\tDeg(\ow{x}_{n_2})}{\tDeg(\ow{x}_{n_2})-\lm}\right) v(x_{n_2}) >  \left( \frac{\tDeg(\ow{x}_{n_2})}{\tDeg(\ow{x}_{n_2})-\lm}\right) v(x_{n_1}).$$
Iterating the argument, we get a path $(x_{n_k})$ such that
$v(x_{n_{k-1}}) < v(x_{n_k})$ and
$$v(\ow{x}_{n_k}) >  \left( \frac{\tDeg(\ow{x}_{n_k})}{\tDeg(\ow{x}_{n_k})-\lm}\right) v(x_{n_1})$$
for all $k$.
Thus, by the assumption on $\tDeg$ and $\tm$, 
$v \not \in \ell^2(\tX, \tm)$ unless $v =0$. 
This completes the proof.
\end{proof}

\begin{remark}
To satisfy the condition above it suffices, for example, that $\ow{b}(x_k, \ow{x}_k)=1 =\ow{m}(\ow{x}_k)$.
\end{remark}

We now apply the above result to a birth-death chain with a single vertex
attached to each natural number. Pictorially the graph in the next result 
can be represented as in Figure~\ref{f:counter_ex}.

\begin{corollary}\label{c:bd_stability_break}
Let $(\N_0,b,m)$ be a birth-death chain.
Let $\tX=\N_0 \cup \{x_k\}_{k=0}^\infty$ and let
$(\tX, \tb, \tm)$ be an induced supergraph of the birth-death chain
by letting $\tb\vert_{X \times X}=b$,  
$\tb(k,x_k)=\tb(x_k,k)>0$ and $b$ zero otherwise
and by extending $m$ to $\tm$.  
If
$$\sum_{r=0}^\infty \left(\frac{\tDeg(x_r)}{\tDeg(x_r)+1}\right)^2 \tm(x_r) = \infty \quad \textup{ and } \quad
\sum_{r=0}^\infty \left( \sum_{k=0}^r \frac{1}{b(k,k+1)} \right)^2 m(r+1) < \infty, $$
then $\ow{L}_c$ is essentially self-adjoint while $L_{c}$ is not
essentially self-adjoint.
\end{corollary}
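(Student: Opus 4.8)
The plan is to prove the two conclusions from the two hypotheses more or less independently.

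The statement that $L_c$ is \emph{not} essentially self-adjoint needs almost nothing: the second hypothesis is precisely the negation of condition (ii) in Hamburger's criterion, Theorem~\ref{t:bd_characterization}, for the birth-death chain $(\N_0,b,m)$, so Theorem~\ref{t:bd_characterization} gives this at once. (Note that only the restriction of $\tb$ and $\tm$ to $\N_0\times\N_0$, which equals $b$ and $m$, enters this condition.)

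For the essential self-adjointness of $\ow{L}_c$ the plan is to reuse the argument in the proof of Theorem~\ref{t:completing}. First I would observe that $(\tX,\tb,\tm)$ is exactly the supergraph constructed there, applied to $(\N_0,b,m)$ with the enumeration $x_k=k$ and with the pendant at $k$ playing the role of $\ow{x}_k$; since each vertex has at most three neighbors, it is locally finite and hence satisfies~(\ref{A}). By the $\lm$-harmonic characterization of essential self-adjointness it then suffices to show $\ker(\ow{L}_c^{\,*}+1)=\{0\}$, i.e.\ that any $v\in\ell^2(\tX,\tm)$ with $\ow{\De}v=-v$ is zero. As in Theorem~\ref{t:completing}, one checks: the equation $\ow{\De}v=-v$ at a pendant yields, for every $k$, the exact identity
\[
v(x_k)=v(k)\,\frac{\tDeg(x_k)}{\tDeg(x_k)+1},
\]
so in particular $|v(x_k)|<|v(k)|$; if $v\neq0$ then (after replacing $v$ by $-v$ if necessary) some base value is positive, say $v(k)>0$; and at a base vertex with $v(j)>0$ the inequality $\ow{\De}v(j)=-v(j)<0$ forces a base neighbor of $j$ with strictly larger value of $v$ (it cannot be the pendant $x_j$ by the displayed inequality). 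Iterating produces an infinite path $j_1=k,j_2,j_3,\dots$ in $\N_0$ along which $v$ strictly increases, so the $j_i$ are pairwise distinct and $v(j_i)\ge v(k)>0$ for all $i$.

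The one point genuinely special to birth-death chains, which I expect to be the crux, is passing from ``the full series over all pendants diverges'' to ``the subseries over the pendants met by this particular path diverges.'' This works because an infinite self-avoiding path in $\N_0$ (with $j_{i+1}\sim j_i$ meaning $|j_{i+1}-j_i|=1$) must eventually be monotone increasing, so $\{j_i\}\supseteq\{N,N+1,N+2,\dots\}$ for some $N$. Combining this with the pendant identity and $v(j_i)\ge v(k)$ gives
\[
\|v\|^2\ \ge\ \sum_i v(x_{j_i})^2\,\tm(x_{j_i})\ \ge\ v(k)^2\sum_{r\ge N}\Bigl(\tfrac{\tDeg(x_r)}{\tDeg(x_r)+1}\Bigr)^2\tm(x_r),
\]
and the right-hand side is a tail of the divergent series in the first hypothesis, hence infinite, contradicting $v\in\ell^2(\tX,\tm)$. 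Thus $v=0$, so $\ker(\ow{L}_c^{\,*}+1)=\{0\}$ and $\ow{L}_c$ is essentially self-adjoint. Everything else is a direct transcription of the computations in the proof of Theorem~\ref{t:completing}, which I would not repeat in detail.
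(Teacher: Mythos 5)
Your proposal is correct and follows the same route as the paper: the failure of essential self-adjointness of $L_c$ is read off directly from Theorem~\ref{t:bd_characterization}, and the essential self-adjointness of $\ow{L}_c$ is obtained by running the eigenfunction argument from the proof of Theorem~\ref{t:completing} with $\lm=-1$, which is exactly what produces the factor $\tDeg(x_r)/(\tDeg(x_r)+1)$.

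There is, however, one substantive point where you go beyond the paper, and it is worth keeping. The paper's proof of Theorem~\ref{t:completing} produces a path $(x_{n_k})$ along which $v$ increases and then concludes $v\notin\ell^2$ ``by the assumption on $\tDeg$ and $\tm$''; but divergence of the \emph{full} series $\sum_{r}\bigl(\tDeg(x_r)/(\tDeg(x_r)+1)\bigr)^2\tm(x_r)$ does not in general imply divergence of the subseries indexed by the pendants actually met by that particular path. (The theorem itself is saved because one is free to \emph{choose} the weights, e.g.\ $\tb(x_k,\ow x_k)=1=\tm(\ow x_k)$, so that every infinite subseries diverges; the corollary, by contrast, imposes only divergence of the full series.) Your observation that a strictly $v$-increasing, hence self-avoiding, nearest-neighbor path in $\N_0$ must be monotone increasing, so that the visited base vertices form a tail $\{k,k+1,\dots\}$ and the relevant subseries is a tail of the hypothesized divergent series, is precisely the step needed to make the corollary's proof airtight, and it is genuinely special to the birth-death-chain base as you say. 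So: same approach as the paper, plus a correct repair of a gap the paper elides.
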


\begin{proof}
The result follows by applying the proof of Theorem~\ref{t:completing}
which shows that $\ow{L}_c$ is essentially self-adjoint under
the assumptions on $\tDeg$ and $\tm$
and Theorem~\ref{t:bd_characterization} which implies that $L_c$ is not
essentially self-adjoint.
\eat{
In order to establish the essential self-adjointness of $L_c$, we analyze solutions $v \in \ms{F}$ to $\De v = \lm v$
for $\lm<0$. If $v(x_0)=0,$ then $\De v = \lm v$ implies $v=0$. Thus, we assume that $v(x_0)>0$
and wish to show that $v \not \in \ell^2(X,m)$. 

A direct calculation using $\De v(x_0) =\lm v(x_0)$
gives
$$0<v(x_0) =\left(\frac{\Deg(x_0)}{\Deg(x_0)-\lm}\right) v(0) < v(0)$$
where $\Deg$ is the weighted vertex degree. Furthermore, using $\De v(0) = \lm v(0)<0$, now gives
$v(0)<v(1)$. Analogous calculations and induction  imply
$$v(x_r) =\left(\frac{\Deg(x_r)}{\Deg(x_r)-\lm}\right) v(r)$$
and $v(r) < v(r+1)$ for all $r \in \N_0$. In particular, if 
$$\sum_{r=0}^\infty \left(\frac{\Deg(x_r)}{\Deg(x_r)-\lm}\right)^2 m(x_r) = \infty,$$
then $v \not \in \ell^2(X,m)$ and thus $L_c$ is essentially self-adjoint.
On the other hand, from Theorem~\ref{t:bd_characterization}, we know that the essential self-adjointness of $L_{c,1}$ is equivalent
to $\sum_{r=0}^\infty \left( \sum_{k=0}^r 1/b(k,k+1) \right)^2 m(r+1) = \infty.$ 
This completes the proof.}
\end{proof}

\begin{figure}[htbp]
\centering
\includegraphics[width=10cm]{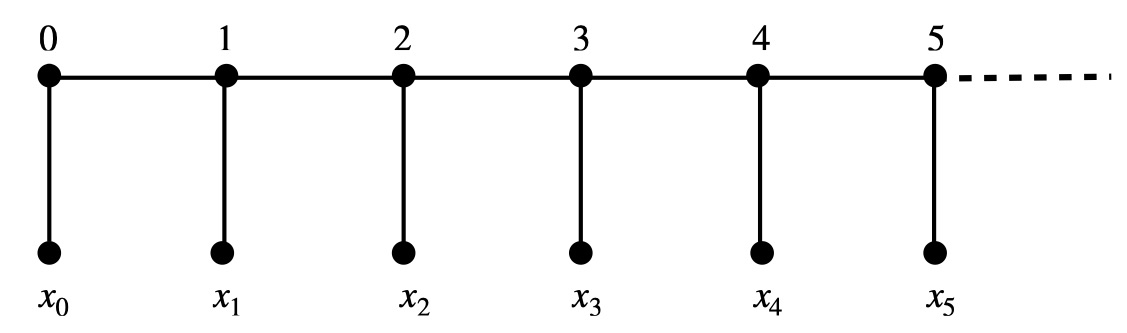}
\caption{The graph in Corollary~\ref{c:bd_stability_break}}
\label{f:counter_ex}
\end{figure}

\begin{remark}
We recall that, in the statement of Theorem~\ref{t:stability}, 
$X=X_1 \cup X_2$ is a disjoint union
with $X_3 = \pt X_1 \cup \pt X_2$ and $b_\partial(x, y)= b(x, y)$ if $x \in \partial X_1, y \in X_2$ or $x \in \partial X_2, y \in X_1$ and 0 otherwise. 
In the corollary above, if we consider 
$X_1= \N_0$, $X_2=\{x_k\}_{k=0}^\infty$, then 
$$\tDeg_\partial(x_r)= \tDeg(x_r) = \frac{\tb(r, x_r)}{\tm(x_r)} \qquad \textup{ and }
\qquad \tDeg_\partial(r) = \frac{\tb(r,x_r)}{m(r)}.$$ 
Under the assumptions in the corollary above, 
both of these cannot be bounded by Theorem~\ref{t:stability}.
This can also be checked directly.

Furthermore, we note that Theorem~\ref{t:completing} complements
the result from Proposition~\ref{p:potential_help}. In 
Proposition~\ref{p:potential_help}, any Laplacian
can be made essentially self-adjoint by adding a positive potential whereas,
in Theorem~\ref{t:completing}, the essential self-adjointness is achieved by geometric means.
\end{remark}
\eat{
We denote the corresponding
weighted degree function for $b_\partial$ by $\Deg_\partial$. We also recall that we restrict $b$ to $X_k \times X_k$
and $m$ to $X_k$ to get $b_k$ and $m_k$ for $k=1,2$. We denote the restriction of the corresponding Laplacian by $L_{c,k}$.}

\eat{
\begin{example}\label{ex:counter}
We let $X=\N_0 \cup \{x_k\}_{k=0}^\infty$ and let $b(x,y)>0$ for $x,y \in \N_0$ if and only if $|x-y|=1$
and $b(k,x_k)=b(x_k,k)>0$ with $b$ zero otherwise. Thus, pictorially, the graph can be represented as in Figure~\ref{f:counter_ex}.

\begin{figure}[htbp]
\centering
\includegraphics[width=10cm]{counter_example.jpg}
\caption{}
\label{f:counter_ex}
\end{figure}

We let $X_1=\N_0$ and $X_2 = \bigcup_{k=0}^\infty \{x_k\}$ so that $X_3 = \pt X_1 \cup \pt X_2= X$ with $b_\partial$ giving the vertical edges
connecting each $k \in \N_0$ to the vertex $x_k$.
We now show that by, choosing appropriate edge weights and measure, the Laplacian on this graph is always essentially self-adjoint.
We note that $L_{c,1}$ may or may not be essentially self-adjoint as we have seen in Theorems~\ref{t:bd_characterization}~and~\ref{t:bd_characterization2}.
On the other hand, as $L_{c,2}$ is a Laplacian on a graph with no edges, it is always essentially self-adjoint.

In order to give a criterion for essential self-adjointness of $L_c$ we analyze solutions $v \in \ms{F}$ to $\De v = \lm v$
for $\lm<0$. If $v(x_0)=0,$ then $\De v = \lm v$ implies $v=0$. Thus, we assume that $v(x_0)>0$
and wish to give a condition so that $v \not \in \ell^2(X,m)$. A direct calculation using $\De v(x_0) =\lm v(x_0)$
gives
$$0<v(x_0) =\left(\frac{\Deg(x_0)}{\Deg(x_0)-\lm}\right) v(0) < v(0)$$
where $\Deg$ is the weighted vertex degree. Furthermore, using $\De v(0) = \lm v(0)<0$, now gives
$v(0)<v(1)$. Analogous calculations and induction  imply
$$v(x_r) =\left(\frac{\Deg(x_r)}{\Deg(x_r)-\lm}\right) v(r)$$
and $v(r) < v(r+1)$ for all $r \in \N_0$. In particular, if 
$$\sum_{r=0}^\infty \left(\frac{\Deg(x_r)}{\Deg(x_r)-\lm}\right)^2 m(x_r) = \infty,$$
then $v \not \in \ell^2(X,m)$ and thus $L_c$ is essentially self-adjoint.

Therefore, choosing $b$ and $m$ so that
$$\sum_{r=0}^\infty \left(\frac{\Deg(x_r)}{\Deg(x_r)-\lm}\right)^2 m(x_r) = \infty \quad \textup{ and } \quad
\sum_{r=0}^\infty \left( \sum_{k=0}^r \frac{1}{b(k,k+1)} \right)^2 m(r+1) < \infty $$
gives a graph such that $L_c$ is essentially self-adjoint but $L_{c,1}$ is not
essentially self-adjoint.

\eat{

Given that $\lim_{r \to \infty} m(x_r)/b(r, x_r)$ exists and is finite, and
using the limit comparison test for series and the fact that $\Deg(x_r) = b(r,x_r)/m(x_r)$,
we get that the above series diverges if and only if
$$\sum_{r=0}^\infty \frac{b^2(r,x_r)}{m(x_r)} = \infty.$$
This gives a sufficient condition for the essential self-adjointness of $L_c$.

From Theorem~\ref{t:bd_characterization}, we know that the essential self-adjointness of $L_{c,1}$ is equivalent
to $\sum_{r=0}^\infty \left( \sum_{k=0}^r 1/b(k,k+1) \right)^2 m(r+1) = \infty.$ 
Thus, we see that by choosing $b$ and $m$ so that $\lim_{r \to \infty} m(x_r)/b(r, x_r)$ exists and is finite,
$$\sum_{r=0}^\infty \left( \sum_{k=0}^r \frac{1}{b(k,k+1)} \right)^2 m(r+1) < \infty \qquad \textup{ and } \qquad
\sum_{r=0}^\infty \frac{b^2(r,x_r)}{m(x_r)} = \infty $$
we get that $L_{c,1}$ is not essentially self-adjoint while $L_c$ is essentially self-adjoint. }

We note that for this example 
$$\Deg_\partial(x_r)= \Deg(x_r) = \frac{b(r, x_r)}{m(x_r)} \qquad \textup{ and }
\qquad \Deg_\partial(r) = \frac{b(r,x_r)}{m(r)}.$$ 
Under the assumptions above, both of these cannot be bounded by Theorem~\ref{t:stability}.
This can also be checked directly. \Hm{should we add this?}
\end{example}}

\subsection{Characterizations for star-like graphs}
In this subsection we extend the notion of star-like graphs from \cite{CTT11} and
give a characterization for the Laplacian to be essentially self-adjoint on such graphs.
Star-like graphs are graphs such that when a subset of vertices is removed
what is left consists of a disjoint union birth-death chains. 
In \cite{CTT11}, this idea is applied to the removal
of a finite subset. Here, we relax this assumption considerably and modify
it to better fit the framework for studying essential self-adjointness.
We again use the notations established for the statement of Theorem~\ref{t:stability}, i.e.,
$X=X_1 \cup X_2$ is a disjoint union with 
 with $(X_i,b_i,m_i)$ being the induced subraphs for $i=1,2$,
$X_3 = \pt X_1 \cup \pt X_2$ and $\Deg_\partial$
giving the vertex degree connecting the two subgraphs.

\begin{definition}[Star-like graph]
A weighted graph $(X,b,m)$ satisfying (A) is called \emph{star-like (for essential self-adjointness)} if $X_1 \subseteq X$ can be chosen so that
\begin{enumerate} 
\item $L_{c,1}$ is essentially self-adjoint 
\item $\Deg_\partial$ is bounded
\item $(X_2,b_2,m_2)$ is a disjoint union of birth-death chains.
\end{enumerate}
A birth-death chain in $X_2$ is called a \emph{ray} of the graph provided that $X_1$
can be chosen so that there is exactly one edge connecting the birth-death chain to $X_1$. 
When $X_1$ consists of a single vertex and all birth-death chains are rays, we call the graph a \emph{star}.
\end{definition}

We now give an example to show that star-like graphs in our sense may have no rays. 
This graph may be visualized as an infinite wheel.
\begin{example}[Star-like but no rays]
Let $X=\{x_0, 0, 1, 2, \ldots\}$ with $X_1=\{x_0\}$, $X_2=\N_0$
and choose $b$ so that $b(j,k)>0$ if $ j,k  \in \N_0$ and $|j-k|=1$,
$b(x_0, k)>0$ for all $k \in \N_0$ with $\sum_{k \in \N_0}b(x_0,k)<\infty$ and $0$ otherwise.
We further choose $m$ so that (A) is satisfied
and $\Deg_\partial(k)= b(k,x_0)/m(k)$ is bounded for $k \in \N_0$
so that the graph is star-like for essential self-adjointness.
Then, $X_2$ consists of a single birth-death chain which is not a ray as there are infinitely many
edges connecting $X_2$ to $X_1$.
\end{example}

\begin{remark}
We note that for $L_{c,1}$ to be essentially self-adjoint it suffices, for example, for any one of the following
conditions to hold:
\begin{itemize}
\item $\Deg_1$ is bounded on $X_1$.
\item The measure of each infinite path in $X_1$ is infinite \cite{KL12}. 
\item The graph $(X_1,b_1,m_1)$ allows for an intrinsic metric
such that all distance balls defined with respect to this metric are finite
\cite{HKMW13}. 
\end{itemize}
In particular, all of these assumptions hold if $X_1$ is a finite set as in \cite{CTT11}.
\end{remark}

We write
$$X_2 = \bigsqcup_{i \in I} \N_0^{(i)}$$
with $\N_0^{(i)}=\{k^{(i)}\}_{k=0}^\infty$
and $L_c^{(i)}$ for the formal Laplacian arising from 
$ (\N_0^{(i)}, b,m)$ restricted to the finitely supported
functions.
Disjointness here means that $b(k^{(i)}, l^{(j)}) =0$ whenever
$i \neq j$ for all $k, l \in \N_0$.
Furthermore, we let $\infty^{(i)}$ denote a point at infinity for the birth-death chain
$(\N_0^{(i)}, b,m)$ 
 and let $\mathrm{Cap}_{2, 2}(\infty^{(i)})$ denote the 
$(2,2)$-capacity of this point. 

With these notions, we
then directly have the following characterization of essential self-adjointness for such graphs.
\begin{theorem}[Characterization for star-like graphs]\label{t:star-like_characterization}
Let $(X,b,m)$ be a star-like graph for essential self-adjointness. Then, the following statements are equivalent:
\begin{itemize}
 \item[(i)] $L_c$ is essentially self-adjoint.
 \item[(ii)] $L_c^{(i)}$ is essentially self-adjoint for every $i \in I$.
\item[(iii)] $\displaystyle{\sum_{r=0}^\infty \left( \sum_{k=0}^r \frac{1}{b\left(k^{(i)},(k+1)^{(i)}\right)} \right)^2 m\left((r+1)^{(i)}\right) = \infty}$
for every $i\in I$.
\item[(iv)] $\mathrm{Cap}_{2, 2}(\infty^{(i)})=0$ or $\mathrm{Cap}_{2, 2}(\infty^{(i)})=\infty$
for every $i \in I$.
\end{itemize}
\end{theorem}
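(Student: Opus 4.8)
The plan is to reduce the assertion to the already-established characterizations for a single birth-death chain, Theorem~\ref{t:bd_characterization} (Hamburger's criterion) and Theorem~\ref{t:bd_characterization2} (the capacity criterion), by peeling off the part $X_1$ with the stability result Theorem~\ref{t:stability} and then decomposing what remains over the index set $I$. Concretely, (i) $\iff$ (ii) will come from stability together with a direct-sum decomposition, while (ii) $\iff$ (iii) and (ii) $\iff$ (iv) are immediate applications of Theorems~\ref{t:bd_characterization} and~\ref{t:bd_characterization2} to each chain $(\N_0^{(i)},b,m)$.

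First I would fix a subset $X_1$ witnessing that $(X,b,m)$ is star-like, so that $L_{c,1}$ is essentially self-adjoint, $\Deg_\partial$ is bounded on $X_3 = \pt X_1 \cup \pt X_2$, and $X_2 = X\setminus X_1 = \bigsqcup_{i\in I}\N_0^{(i)}$ is a disjoint union of birth-death chains. Since $(X,b,m)$ satisfies (\ref{A}) and $\Deg_\partial$ is bounded, Theorem~\ref{t:stability} applies and yields that $L_c$ is essentially self-adjoint if and only if both $L_{c,1}$ and $L_{c,2}$ are. As $L_{c,1}$ is essentially self-adjoint by the choice of $X_1$, the question is reduced to the essential self-adjointness of $L_{c,2}$.

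Next I would decompose $L_{c,2}$ over $I$. The subgraph $(X_2,b_2,m_2)$ satisfies (\ref{A}) since every vertex of a birth-death chain has at most two neighbors, and after relabeling the vertices of $\N_0^{(i)}$ as $\N_0$ each $(\N_0^{(i)},b,m)$ is a birth-death chain in the sense of the definition. Because $b(k^{(i)},l^{(j)})=0$ for $i\neq j$, one has the orthogonal decomposition $\ell^2(X_2,m_2)=\bigoplus_{i\in I}\ell^2(\N_0^{(i)},m)$, the formal Laplacian $\Delta_2$ respects it, acting as the chain Laplacian $\Delta^{(i)}$ on the $i$-th summand, and $L_{c,2}$ acts as $L_c^{(i)}$ there. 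Using the description $D(L_{c,2}^*)=\{f\in\ell^2(X_2,m_2)\mid\Delta_2 f\in\ell^2(X_2,m_2)\}$ recalled in Section~\ref{s:setting}, this gives $D(L_{c,2}^*)=\{(f_i)_i\mid f_i\in D((L_c^{(i)})^*),\ \sum_{i\in I}\|(L_c^{(i)})^*f_i\|^2<\infty\}$ and $[f,g]_{L_{c,2}^*}=\sum_{i\in I}[f_i,g_i]_{(L_c^{(i)})^*}$. By Lemma~\ref{l:basic fact}, $L_{c,2}$ is essentially self-adjoint iff $[f,g]_{L_{c,2}^*}=0$ for all $f,g\in D(L_{c,2}^*)$; choosing $f,g$ supported on a single summand shows this forces $[f_i,g_i]_{(L_c^{(i)})^*}=0$ for all $i$ and all admissible $f_i,g_i$, while the converse is clear from the sum formula. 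By Lemma~\ref{l:basic fact} once more, this is exactly the essential self-adjointness of every $L_c^{(i)}$, which is (ii); combined with the previous paragraph this proves (i) $\iff$ (ii). Finally, applying Theorem~\ref{t:bd_characterization} to $(\N_0^{(i)},b,m)$ gives (ii) $\iff$ (iii), and applying Theorem~\ref{t:bd_characterization2} gives (ii) $\iff$ (iv).

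The only point I expect to require care is the direct-sum step: one must verify that the adjoint of $L_{c,2}$ and the boundary form $[\cdot,\cdot]_{L_{c,2}^*}$ genuinely split over the possibly infinite family $I$, so that essential self-adjointness is an entirely componentwise property. Everything else is a direct quotation of Theorems~\ref{t:stability}, \ref{t:bd_characterization} and~\ref{t:bd_characterization2}.
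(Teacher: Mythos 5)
Your proposal is correct and follows exactly the route of the paper's proof: apply Theorem~\ref{t:stability} to reduce to $L_{c,2}$, decompose $L_{c,2}$ as a direct sum of the $L_c^{(i)}$ over $I$, and then invoke Theorems~\ref{t:bd_characterization} and~\ref{t:bd_characterization2} componentwise. The only difference is that you spell out the direct-sum step (the splitting of $D(L_{c,2}^*)$ and of the boundary form $[\cdot,\cdot]_{L_{c,2}^*}$ over a possibly infinite $I$), which the paper dismisses as ``general theory''; your added detail is sound and arguably an improvement in rigor.
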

\begin{proof}
This follows immediately from Theorems~\ref{t:stability},~\ref{t:bd_characterization},~\ref{t:bd_characterization2}
and general theory. In particular, as the Laplacian $L_{c,1}$ is assumed to be essentially self-adjoint
and $\Deg_\partial$ is bounded, Theorem~\ref{t:stability} gives that the essential self-adjointness of 
$L_c$ is equivalent to the essential self-adjointness
of the Laplacian $L_{c,2}$. By the disjointness of the birth-death chains, 
the Laplacian $L_{c,2}$ can be decomposed into a direct sum of
$L_c^{(i)}$, 
thus, $L_{c,2}$ is essentially self-adjoint if and only if 
every $L_c^{(i)}$ is essentially self-adjoint which gives the equivalence
between (i) and (ii). The equivalence between (ii), (iii), and (iv) follows from 
Theorems~\ref{t:bd_characterization}~and~\ref{t:bd_characterization2}.
\end{proof}

We now give two examples to illustrate the result.

\begin{example}[Star with two rays]\label{ex:esa_integers}
In the case of $X=\Z$ with $b(x,y)>0$ if and only if $|x-y|=1$ we get that the Laplacian $L_c$ is essentially self-adjoint 
if and only if
$$\sum_{r=0}^\infty \left( \sum_{k=0}^r \frac{1}{b(k,k+1)} \right)^2 m(r+1) = \infty$$
and
$$\sum_{r=0}^\infty \left( \sum_{k=0}^r \frac{1}{b(-k,-k-1)} \right)^2 m(-r-1) = \infty$$
by taking $X_1=\{0\}$ in Theorem~\ref{t:star-like_characterization}.
\end{example}

\begin{example}[Star with infinitely many rays]\label{ex:esa_star}
For this example, we let $X_1=\{x_0\}$ and 
$$X_2 = \bigsqcup_{i \in \N} \N_0^{(i)}$$ 
with $b(x_0, 0^{(i)})>0$ for $i \in \N$ 
satisfying 
$$\sum_{i =1}^\infty b(x_0, 0^{(i)})<\infty \qquad \textup{ and } \qquad \sum_{i=1}^\infty \frac{b^2(x_0,0^{(i)})}{m(0^{(i)})}< \infty.$$
In order to apply Theorem~\ref{t:star-like_characterization}, we need that $\Deg_\partial$ is bounded. 
In this case, this amounts to the fact that the function $f\colon \N \longrightarrow [0,\infty)$ given by
$$f(i)= \frac{b(0^{(i)},x_0)}{m(0^{(i)})}$$ 
is bounded.

Under these assumptions, Theorem~\ref{t:star-like_characterization} gives that the Laplacian on the infinite
star is essentially self-adjoint if and only if the Laplacian on each birth-death chain is essentially self-adjoint, i.e.,
$$\sum_{r=0}^\infty \left( \sum_{k=0}^r \frac{1}{b\left(k^{(i)},(k+1)^{(i)}\right)} \right)^2 m\left((r+1)^{(i)}\right) = \infty$$
for all $i\in \N$.

\end{example}

\subsection{Characterizations of the $\ell^2$-Liouville property}
We have seen that the analysis of harmonic functions plays a key role in the preceding considerations.
In particular, in order to establish the failure of essential self-adjointness on birth-death chains, 
we constructed a non-constant harmonic function in $\ell^2$ over the doubled birth-death chain.
In this subsection, we focus a bit more on the existence of non-constant harmonic
functions which are square summable.

In particular, we recall that a weighted graph satisfies the $\ell^2$-\emph{Liouville property} if every harmonic
function which is in $\ell^2(X,m)$ is constant. A general
criterion for this to happen is a Karp-type theorem which states that this is the
case if the harmonic function does not grow too rapidly on balls defined with respect
to an intrinsic metric. A consequence
of this is a Yau-type theorem which implies the $\ell^2$-Liouville
property in case of metric completeness with respect to an intrinsic metric,
see \cite{HK14, KLW21} for further details as well as \cite{HKLS22} for
an extension to general Dirichlet forms.

The essential self-adjointness of $L_c$ always implies 
the $\ell^2$-Liouville property. On the other hand, the $\ell^2$-Liouville property along with strict positivity of the
Laplacian
and $m(X)=\infty$, implies the essential self-adjointness of $L_c$. This follows from general theory, see \cite{GM11, HMW21}.


We will now focus on the $\ell^2$-Liouville property in the case of 
star-like graphs as introduced in the previous subsection and give some 
further connections to essential self-adjointness.
We first extend Lemma~\ref{l:harmonic} concerning 
harmonic functions on birth-death chains to stars with two rays, i.e., graphs with $X=\Z$
and $b(x,y)>0$ if and only if $|x-y|=1$.

\begin{lemma}[Characterization
of harmonic functions on stars with two rays]\label{l:double_harmonic}
Let $(\Z,b,m)$ be a star with two rays. Let $v \in C(\Z)$ and $C=b(0,1)(v(1)-v(0))$.
Then, $v$ is harmonic
if and only if all three of the following conditions hold:

\begin{enumerate}
\item If $r \geq 1$, then 
$$v(r+1) = v(1) + C \sum_{k=1}^r \frac{1}{b(k,k+1)}.$$
\item $C=b(0,-1)(v(0)-v(-1))$.
\item If $r \geq 1$, then
$$v(-r-1) = v(-1) - C \sum_{k=1}^r \frac{1}{b(-k,-k-1)}.$$ 
\end{enumerate}
In particular, $v$ is constant if and only if $C=0$.
\end{lemma}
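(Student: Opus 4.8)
The plan is to reduce the statement to the formula for $\lambda$-harmonic functions on a single ray, which is exactly Lemma~\ref{l:harmonic}, applied at the two vertices $0$ and $1$ from the positive side and at $0$ and $-1$ from the negative side. First I would note that harmonicity of $v$ on $\Z$ means $\De v(r)=0$ for \emph{every} $r \in \Z$; in particular it holds for all $r \neq 0$ on the nonnegative ray $\N_0$ and for all $r \neq 0$ on the nonpositive ray $-\N_0$. Applying Lemma~\ref{l:harmonic} to the birth-death chain $(\N_0, b, m)$ gives condition (1) with $C = b(0,1)(v(1)-v(0))$, since the recursion $\De v(r)=0$ for $r \geq 1$ is precisely the content of that lemma. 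Applying the same lemma to the reflected ray (relabel $-r \mapsto r$), the condition $\De v(r) = 0$ for all $r \leq -1$ yields condition (3), but with the constant $b(0,-1)(v(0)-v(-1))$ in place of $C$; so at this stage the two rays are governed by possibly different constants.

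The remaining point is the single equation $\De v(0) = 0$, which is the one constraint that couples the two rays and which is \emph{not} used in deriving (1) or (3). Writing it out,
\begin{equation*}
m(0)\De v(0) = b(0,1)(v(0)-v(1)) + b(0,-1)(v(0)-v(-1)) = 0,
\end{equation*}
so $b(0,-1)(v(0)-v(-1)) = b(0,1)(v(1)-v(0)) = C$, which is exactly condition (2). Hence harmonicity on all of $\Z$ is equivalent to: (1) (harmonicity away from $0$ on the right ray), (3) with its own constant (harmonicity away from $0$ on the left ray), together with (2), and (2) forces that left-ray constant to equal $C$, giving the three conditions as stated. Conversely, if (1), (2), (3) all hold, then reversing these implications shows $\De v(r) = 0$ for all $r$: for $r \geq 1$ by (1), for $r \leq -1$ by (3) with constant $C$ (using (2) to identify the constant), and for $r = 0$ by (2) directly.

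For the final sentence: if $C = 0$, then (1) gives $v(r+1) = v(1)$ for all $r \geq 1$ and (2) gives $b(0,-1)(v(0)-v(-1))=0$, hence $v(0)=v(1)$ (from $C=b(0,1)(v(1)-v(0))=0$ and $b(0,1)>0$) and $v(-1)=v(0)$, and then (3) gives $v(-r-1)=v(-1)$ for all $r \geq 1$; so $v$ is constant. Conversely, if $v$ is constant then $C = b(0,1)(v(1)-v(0)) = 0$ immediately. I do not anticipate a serious obstacle here; the only thing to be careful about is the bookkeeping of the two a priori distinct constants on the two rays and making explicit that it is the equation at the junction vertex $0$ — the one piece of information orthogonal to Lemma~\ref{l:harmonic} applied ray-by-ray — that both supplies condition (2) and reconciles the constants.
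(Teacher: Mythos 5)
Your proposal is correct and is essentially the argument the paper intends (the paper simply states ``direct calculations and induction''): harmonicity at $r\neq 0$ on each ray is handled by Lemma~\ref{l:harmonic} applied ray-by-ray, and the single equation $\De v(0)=0$ at the junction supplies condition (2) and identifies the two a priori distinct constants. Your explicit bookkeeping of the sign flip on the reflected ray and of the role of the vertex $0$ is exactly the content of that direct calculation.
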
 
\begin{proof}
The proof follows by direct calculations and induction.
\end{proof}

As a direct consequence,
we first note that there is a special case when the harmonic
function vanishes at either end of the graph.
\begin{corollary}\label{c:vanishing}
Let $(\Z,b,m)$ be a star with two rays. Let 
$v \in C(\Z)$ be harmonic and let $C=b(0,1)(v(1)-v(0))$.
Then, $v(r) \to 0$ as $r \to \infty$ if and only if 
$\sum_{k=0}^\infty b^{-1}(k,k+1)<\infty$ and
$$C={-v(1)}/{\sum_{k=0}^\infty b^{-1}(k,k+1)<\infty}.$$
In this case, 
$$v(r)=-C\sum_{k=r}^\infty \frac{1}{b(k,k+1)}$$
for $r \geq 2$. 

Analogously,
 $v(r) \to 0$ as $r \to -\infty$ if and only if 
$\sum_{k=0}^\infty b^{-1}(-k,-k-1)<\infty$ and
$$C={v(-1)}/{\sum_{k=0}^\infty b^{-1}(-k,-k-1)<\infty}.$$
In this case, 
$$v(-r)=C\sum_{k=r}^\infty \frac{1}{b(-k,-k-1)}$$
for $r \geq 2$.  
\end{corollary}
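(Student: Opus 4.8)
The plan is to compute directly from the explicit description of harmonic functions on a star with two rays given in Lemma~\ref{l:double_harmonic}. By part~(1) of that lemma, for $r \geq 1$ we have
$$v(r+1) = v(1) + C\sum_{k=1}^{r} \frac{1}{b(k,k+1)},$$
and since $b>0$ on edges the partial sums $\sum_{k=1}^{r} b^{-1}(k,k+1)$ form a strictly increasing sequence. Thus $\bigl(v(r)\bigr)_{r\geq 1}$ is $v(1)$ plus $C$ times this increasing sequence, which already makes the behavior as $r\to\infty$ transparent, and likewise parts~(2) and~(3) of the lemma control the behavior along the second ray.

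For the forward implication, suppose $v(r)\to 0$ as $r\to\infty$. If $C\neq 0$ the sequence $C\sum_{k=1}^{r} b^{-1}(k,k+1)$ is strictly monotone, so it can converge only if $\sum_{k=1}^{\infty} b^{-1}(k,k+1)<\infty$; adding the positive term $b^{-1}(0,1)$ does not affect convergence, so this is equivalent to $\sum_{k=0}^{\infty} b^{-1}(k,k+1)<\infty$. Passing to the limit in the displayed formula gives $0 = v(1) + C\sum_{k=1}^{\infty} b^{-1}(k,k+1)$, and combining this with $v(0) = v(1) - Cb^{-1}(0,1)$, which follows from the definition of $C$, yields the asserted value of $C$. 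The degenerate possibility $C=0$ has to be mentioned separately: then $v$ is constant on the nonnegative ray by Lemma~\ref{l:double_harmonic}, so $v(r)\to 0$ is only possible when that constant is $0$; this is consistent with the claimed formula (with $C=0$), and since such a $v$ is constant rather than genuinely decaying it is the trivial case, so I would simply record it.

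For the converse, assume $\sum_{k=0}^{\infty} b^{-1}(k,k+1)<\infty$ and that $C$ has the prescribed value. Substituting into the formula of Lemma~\ref{l:double_harmonic}(1) and letting $r\to\infty$ gives $\lim_{r\to\infty} v(r) = v(1) + C\sum_{k=1}^{\infty} b^{-1}(k,k+1) = 0$. To obtain the closed form $v(r) = -C\sum_{k=r}^{\infty} b^{-1}(k,k+1)$ for $r\geq 2$, I would write $v(r) = v(1) + C\sum_{k=1}^{r-1} b^{-1}(k,k+1)$, substitute the value $v(1) = -C\sum_{k=1}^{\infty} b^{-1}(k,k+1)$, and collapse the difference of the two sums into the tail $\sum_{k=r}^{\infty} b^{-1}(k,k+1)$. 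The statement as $r\to-\infty$ is proved by the mirror-image argument: apply the same reasoning to the second ray using parts~(2) and~(3) of Lemma~\ref{l:double_harmonic}, replacing $b(k,k+1)$ by $b(-k,-k-1)$ and $v(1)$ by $v(-1)$, the sign change reflecting that this ray is traversed in the decreasing direction. The only delicate point anywhere in the argument is the index bookkeeping in the various sums (whether they begin at $k=0$ or $k=1$, tied to normalizing at $v(0)$ versus $v(1)$) together with the isolated case $C=0$; neither constitutes a real obstacle, and the rest is a routine computation in the spirit of Lemma~\ref{l:harmonic}.
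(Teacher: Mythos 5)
Your proposal is correct and takes essentially the same route as the paper, whose proof is simply to invoke Lemma~\ref{l:double_harmonic} and perform the elementary algebraic manipulations you carry out. One remark on the bookkeeping you flag: passing to the limit actually yields $C=-v(0)/\sum_{k=0}^\infty b^{-1}(k,k+1)$, equivalently $C=-v(1)/\sum_{k=1}^\infty b^{-1}(k,k+1)$, so the displayed formula in the statement, which pairs $v(1)$ with the sum starting at $k=0$, contains an index typo that your derivation correctly resolves rather than an error in your argument.
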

\begin{proof}
The proof follows directly from Lemma~\ref{l:double_harmonic}
and basic algebraic manipulations.
\end{proof}

We remark that when $v$ vanishes at either end, it agrees 
with the
Green's function on that end. We will return to this viewpoint later
when we characterize the $\ell^2$-Liouville property.

As a second direct consequence, we merely reiterate when such a harmonic function
is in $\ell^2$ for future reference.
\begin{corollary}\label{c:v_square_sum}
Let $(\Z,b,m)$ be a star with two rays. 
Let $v \in C(\Z)$ be harmonic with $C=b(0,1)(v(1)-v(0))$. 
Then, $v \in \ell^2(\Z,m)$ if and only if
$$\sum_{r = 1}^\infty\left(v(1) + C \sum_{k=1}^r \frac{1}{b(k,k+1)}\right)^2
m(r+1)<\infty$$
and
$$\sum_{r = 1}^\infty\left(v(-1) - C \sum_{k=1}^r \frac{1}{b(-k,-k-1)}\right)^2
m(-r-1)<\infty.$$
\end{corollary}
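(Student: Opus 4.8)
The plan is to reduce the statement directly to Lemma~\ref{l:double_harmonic}, using only the fact that square summability over $\Z$ splits into square summability at $\infty$ and at $-\infty$ and is insensitive to finitely many terms. First I would recall that, by definition, $v \in \ell^2(\Z,m)$ if and only if $v \in \ell^2(\infty)$ and $v \in \ell^2(-\infty)$, that is, if and only if both $\sum_{r=0}^\infty v^2(r)m(r)<\infty$ and $\sum_{r=0}^\infty v^2(-r)m(-r)<\infty$.

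Next I would treat the two ends separately. For the end at $\infty$, the two terms $v^2(0)m(0)$ and $v^2(1)m(1)$ are finite, so $\sum_{r=0}^\infty v^2(r)m(r)<\infty$ is equivalent to $\sum_{r=1}^\infty v^2(r+1)m(r+1)<\infty$. Since $v$ is harmonic, condition (1) of Lemma~\ref{l:double_harmonic} applies for $r \geq 1$ and gives $v(r+1) = v(1) + C\sum_{k=1}^r 1/b(k,k+1)$; substituting this yields exactly the first displayed series. The end at $-\infty$ is handled identically: dropping the finite terms $v^2(0)m(0)$ and $v^2(-1)m(-1)$ reduces $\sum_{r=0}^\infty v^2(-r)m(-r)<\infty$ to $\sum_{r=1}^\infty v^2(-r-1)m(-r-1)<\infty$, and substituting $v(-r-1) = v(-1) - C\sum_{k=1}^r 1/b(-k,-k-1)$ from condition (3) of Lemma~\ref{l:double_harmonic} gives the second displayed series. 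Conjoining the two equivalences gives the claim.

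There is essentially no obstacle here; the only point requiring a small amount of care is that the closed-form expressions in Lemma~\ref{l:double_harmonic} are valid only for indices $r \geq 1$, so one must note explicitly that the finitely many initial values $v(0)$, $v(1)$, $v(-1)$ contribute only finitely much and hence do not affect the convergence of either tail. With that remark in place the corollary is immediate.
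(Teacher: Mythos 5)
Your proof is correct and is exactly the argument the paper intends: the corollary is stated there without proof as a direct consequence of Lemma~\ref{l:double_harmonic}, obtained by splitting square summability over $\Z$ into the two tails, discarding the finitely many initial terms, and substituting the closed-form expressions for $v(r+1)$ and $v(-r-1)$.
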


\eat{
As another direct consequence of the above, we give 
conditions on some specific harmonic functions to be constant.
\begin{corollary}\label{c:dbd_constant}
Let $(\Z,b,m)$ be a star with two rays. 
Let $v \in C(\Z)$ be harmonic such that $v \in \ell^2(\Z,m)$
and let $C=b(0,1)(v(1)-v(0)).$
\begin{enumerate}
\item If $Cv(1)\geq0$ and 
$$\sum_{r=1}^\infty \left( \sum_{k=1}^r \frac{1}{b(k,k+1)}\right)^2 m(r+1)
=\infty,$$
then $v$ is constant.
\item If $Cv(1)\leq 0$ and
$$\sum_{r=1}^\infty \left( \sum_{k=1}^r \frac{1}{b(-k,-k-1)}\right)^2 m(-r-1)
=\infty,$$
then $v$ is constant.
\end{enumerate} 
\end{corollary}
\begin{proof}
For the proof of $(1)$ we note that if $Cv(1)\geq0$, then either
$C \geq 0$ and $v(1) \geq 0$ or $C \leq 0$ and $v(1) \leq 0$. In either case,
if $ C \neq 0$, then 
$$\sum_{r=1}^\infty \left( \sum_{k=1}^r \frac{1}{b(k,k+1)}\right)^2 m(r+1)
=\infty$$
is equivalent to
$$\sum_{r = 1}^\infty\left(v(1) + C \sum_{k=1}^r \frac{1}{b(k,k+1)}\right)^2
m(r+1)=\infty$$
by basic arguments. This gives a contradiction to $v \in \ell^2(\Z,m)$
by Corollary~\ref{c:v_square_sum}. Therefore, $C=0$ and thus
$v$ must be constant by Lemma~\ref{l:double_harmonic}.

To prove $(2)$, we note that $Cv(1) \leq 0$ is equivalent to $Cv(-1) \geq 0$.
Now, the proof follows as the proof of $(1)$ above.
\end{proof}

We note that the non-summability conditions above are exactly those
found for the essential self-adjointness of $L_c^{(1)}$ and
$L_c^{(2)}$ where we take $X_1=\{0\}$ and let
$L_c^{(1)}=L_c^{(\infty)}$ and $L_c^{(2)}=L_c^{(-\infty)}$ be the Laplacians on the positive
and negative rays, respectively.
From Theorem~\ref{t:stability} and general theory, we know that $L_c$
is essentially self-adjoint if and only if both $L_c^{(1)}$ and
$L_c^{(2)}$ are essentially self-adjoint which then implies
that the star with two rays satisfies 
the $\ell^2$-Liouville property. This also follows directly from Corollary~\ref{c:dbd_constant} and Theorem~\ref{t:star-like_characterization}.}

\begin{remark}[Harmonic functions on stars with two rays]
Looking at the results above, we see that non-constant 
harmonic functions on stars with two rays are always monotonic, 
thus, there are only a few possibilities for their end behavior. 
Either they remain bounded at either or both ends, in which case it is easy to characterize when they are
in $\ell^2$ there depending 
on if they go to zero or to a non-zero constant on that end. 
In the case that they are unbounded on an end,
the function being in $\ell^2$ there is equivalent to 
the failure of the essential self-adjointness
of the Laplacian by the limit comparison test and Hamburger's
criterion. These observations will ultimately lead to our characterization.
\end{remark}

We start with the following which states that the
$\ell^2$-Liouville property implies that the Laplacian on at 
least one of the rays of a star  is essentially self-adjoint.
\begin{proposition}
Let $(X,b,m)$ be a star with at least two rays. 
If $(X,b,m)$ satisfies the $\ell^2$-Liouville property,
then $L_c^{(i)}$ is essentially self-adjoint for some $i \in I$.
\end{proposition}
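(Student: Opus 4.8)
The plan is to argue by contraposition: supposing that $L_c^{(i)}$ is \emph{not} essentially self-adjoint for every $i \in I$, I will build a non-constant harmonic function $v \in \ell^2(X,m)$, contradicting the $\ell^2$-Liouville property. Since the star has at least two rays, fix distinct $i_1, i_2 \in I$; write $X_1=\{x_0\}$, and for each $i$ let $0^{(i)}$ denote the vertex of $\N_0^{(i)}$ joined to $x_0$, so $b(x_0,0^{(i)})>0$. By Hamburger's criterion, Theorem~\ref{t:bd_characterization}, the failure of essential self-adjointness of $L_c^{(i_j)}$ is the statement
$$\sum_{r=0}^\infty \left( \sum_{k=0}^r \frac{1}{b\left(k^{(i_j)},(k+1)^{(i_j)}\right)} \right)^2 m\left((r+1)^{(i_j)}\right) < \infty, \qquad j=1,2,$$
and since each inner sum is at least $1/b(0^{(i_j)},1^{(i_j)})>0$, this forces $m\big(\N_0^{(i_j)}\big)<\infty$ as well.

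Next I would write down $v$ explicitly. Put $v(x_0)=0$ and $v\equiv 0$ on every ray $\N_0^{(i)}$ with $i\notin\{i_1,i_2\}$. On $\N_0^{(i_1)}$, choose $v(0^{(i_1)})=a>0$ arbitrarily and extend $v$ along the birth-death chain $x_0, 0^{(i_1)}, 1^{(i_1)},\dots$ by the formula of Lemma~\ref{l:harmonic} so that $v$ is harmonic at every vertex of this chain except $x_0$; here the relevant constant is $C_1=b(x_0,0^{(i_1)})a>0$, so $v$ is positive and increasing along the ray. On $\N_0^{(i_2)}$, set $v(0^{(i_2)})=-b(x_0,0^{(i_1)})a\,/\,b(x_0,0^{(i_2)})<0$ and extend $v$ along $x_0, 0^{(i_2)},1^{(i_2)},\dots$ in the same way, now with $C_2<0$, so $v$ is negative and decreasing along that ray. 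Note $v\in\ms{F}$, since at $x_0$ only the two summands coming from rays $i_1$ and $i_2$ are nonzero.

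I would then verify the three required properties. Harmonicity of $v$ at every vertex of $X_2$ is built into the construction, and at $x_0$ it reduces, because $v$ vanishes on all other rays, to the identity $b(x_0,0^{(i_1)})v(0^{(i_1)})+b(x_0,0^{(i_2)})v(0^{(i_2)})=0$, which holds by the choice of $v(0^{(i_2)})$; hence $v$ is harmonic. For $v\in\ell^2(X,m)$: $v$ is supported on $\{x_0\}\cup\N_0^{(i_1)}\cup\N_0^{(i_2)}$, the value at $x_0$ contributes a single finite term, and on each of $\N_0^{(i_1)}$ and $\N_0^{(i_2)}$ the function $v$ is monotone of the sign of $C_1$, resp.\ $C_2$, so the square-summability clause of Lemma~\ref{l:harmonic} (and its decreasing analogue) shows that $v$ restricted to $\N_0^{(i_j)}$ lies in $\ell^2$ exactly because the Hamburger sum above is finite. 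Finally $v$ is non-constant, since $v(x_0)=0\ne a=v(0^{(i_1)})$. Therefore the $\ell^2$-Liouville property fails, which is the contrapositive.

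The only delicate point — the step I expect to need the most care — is the square-summability bookkeeping on each ray. Lemma~\ref{l:harmonic} is stated for a birth-death chain whose first vertex is labelled $0$, whereas here $v$ on $\N_0^{(i_j)}$ is the tail of a harmonic function on the longer chain $x_0, 0^{(i_j)}, 1^{(i_j)},\dots$, so one must check that the $\ell^2$-condition the lemma produces on this longer chain is equivalent to the Hamburger sum for $\N_0^{(i_j)}$ itself. This reduces to the elementary estimate $T_r^2\le (c+T_r)^2\le 2c^2+2T_r^2$ for the partial sums $T_r=\sum_{k=0}^r 1/b(k^{(i_j)},(k+1)^{(i_j)})$ with $c=1/b(x_0,0^{(i_j)})$, together with the fact that $\sum_r m((r+1)^{(i_j)})<\infty$, which was observed above to follow from the failure of essential self-adjointness of $L_c^{(i_j)}$.
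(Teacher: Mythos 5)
Your proposal is correct and follows essentially the same route as the paper: argue by contraposition, pick two rays on which essential self-adjointness fails, and glue together a non-constant harmonic function that is positive increasing on one ray, negative decreasing on the other, zero elsewhere, with square-summability coming from Hamburger's criterion. The paper packages the harmonicity via Lemma~\ref{l:double_harmonic} and the $\ell^2$ bookkeeping via Theorem~\ref{t:star-like_characterization} and Corollary~\ref{c:v_square_sum}, whereas you do the index-shift estimate by hand, but these are the same argument.
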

\begin{proof}
We denote the common vertex of the rays by $x_0$.
Suppose that $L_c^{(i)}$ are not essentially self-adjoint for all $i$.
Since there are at least two rays this means that there exist $L_c^{(i_1)}$
and $L_c^{(i_2)}$ which are not essentially self-adjoint.
Then we can define a non-constant harmonic function by letting $v(x_0)=0, v(0^{(i_1)})>0$ 
and using Lemma~\ref{l:double_harmonic}
to extend $v$ to $\N_0^{(i_1)}$ and $-\N_0^{(i_2)}$.
We then let $v$ be 0 at all other vertices. By direct calculations
and Lemma~\ref{l:double_harmonic}, this function is harmonic.
Furthermore, $v \in \ell^2(X,m)$ by Theorem~\ref{t:star-like_characterization},
Corollary~\ref{c:v_square_sum} and 
basic arguments.
\end{proof}

\begin{remark}
We need at least two rays in the result above as all birth-death chains
satisfy the $\ell^2$-Liouville property but $L_c$ may not be essentially self-adjoint.
\end{remark}

The above result reverses in the case of recurrent stars with exactly two rays.
This begins our characterization of the $\ell^2$-Liouville property in this
case. For the sake of consistency, we formulate our results
in terms of the failure of this property as further proofs will take this 
viewpoint and be constructive
in nature.
Also for the sake of consistency with future statements, we let
$L_c^{(\infty)}$ and $L_c^{(-\infty)}$ denote the Laplacians on the
negative and positive parts of the graph. 
As all harmonic functions are unbounded at both ends in the recurrent case,
they may be visualized as in Figure~\ref{fig:M1}.
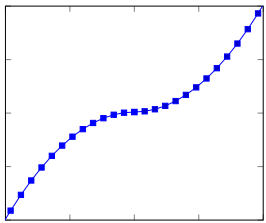
\begin{figure} \centering
\begin{tikzpicture}[scale=0.50]
\begin{axis}
[
xlabel={},
ylabel={},
xmin=-10, xmax=10,
ymin=-100, ymax=100,
 yticklabel=\empty,
  xticklabel=\empty,
]
\addplot+[y,mark=square*, domain=1:10, samples at={0,0.8,..., 10},circle, blue]{x*x+1};
\addplot+[y, mark=square*,,mark options={solid,fill=blue}, domain=1:10, samples at={0,-0.8,...,-10}, circle, blue]{-x*x+1};
\end{axis}
\end{tikzpicture}
\caption{Harmonic functions: recurrence} \label{fig:M1}
\end{figure}

\begin{proposition}[Characterization of $\ell^2$-Liouville: recurrence]\label{p:recurrent_rays}
Let $(\Z,b,m)$ be a star with two rays which is recurrent. 
Then, $(\Z,b,m)$ does not satisfy the $\ell^2$-Liouville property
if and only if both $L_c^{(\infty)}$ and $L_c^{(-\infty)}$ are not essentially self-adjoint.
\end{proposition}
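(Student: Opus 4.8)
The plan is to combine three earlier facts: the explicit form of harmonic functions on a star with two rays (Lemma~\ref{l:double_harmonic}), the square-summability test for them (Corollary~\ref{c:v_square_sum}), and Hamburger's criterion applied to each ray (Theorem~\ref{t:bd_characterization}, or equivalently Theorem~\ref{t:star-like_characterization} with $X_1=\{0\}$). For a harmonic $v$ write $C=b(0,1)(v(1)-v(0))$. By Lemma~\ref{l:double_harmonic} $v$ is constant exactly when $C=0$, and $v\mapsto -v$ replaces $C$ by $-C$, so in deciding whether a non-constant harmonic $\ell^2$ function exists we may assume $C>0$. First I would note that recurrence of a star with two rays means recurrence of both rays, i.e.\ $\sum_k 1/b(k,k+1)=\infty$ and $\sum_k 1/b(-k,-k-1)=\infty$ (the effective resistance to infinity is the parallel combination of the two rays; see e.g.~\cite{Woe09}). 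Then by Lemma~\ref{l:double_harmonic} every non-constant harmonic function is strictly monotone and tends to $+\infty$ at one end and $-\infty$ at the other --- it is this ``unbounded at both ends'' picture (Figure~\ref{fig:M1}) that lets the two ends be treated separately.

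For the direction ``both rays not essentially self-adjoint $\Rightarrow$ $\ell^2$-Liouville fails'', by Hamburger's criterion the hypothesis means
\[
\sum_{r=0}^\infty \Big(\sum_{k=0}^r \frac{1}{b(k,k+1)}\Big)^2 m(r+1)<\infty
\qquad\text{and}\qquad
\sum_{r=0}^\infty \Big(\sum_{k=0}^r \frac{1}{b(-k,-k-1)}\Big)^2 m(-r-1)<\infty.
\]
I would fix any non-constant harmonic $v$ with $C>0$ (choose $v(0)=0$, $v(1)>0$, let condition~(2) of Lemma~\ref{l:double_harmonic} determine $v(-1)$, and extend by that lemma). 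Since $C>0$ and $\sum_k 1/b(k,k+1)=\infty$, the quantity $v(1)+C\sum_{k=1}^r 1/b(k,k+1)$ is eventually positive and, by the limit comparison test, comparable to $\sum_{k=0}^r 1/b(k,k+1)$; hence the first series in Corollary~\ref{c:v_square_sum} converges, and symmetrically so does the second. Therefore $v\in\ell^2(\Z,m)$ is harmonic and non-constant, so the $\ell^2$-Liouville property fails.

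For the converse I would begin with a non-constant harmonic $v\in\ell^2(\Z,m)$, normalise to $C>0$, and apply Corollary~\ref{c:v_square_sum}: both of its series converge. The same limit comparison run backwards --- again using $C>0$ and the divergence (from recurrence) of $\sum_k 1/b(k,k+1)$ and of $\sum_k 1/b(-k,-k-1)$ --- turns this into convergence of the two displayed Hamburger series, which by Theorem~\ref{t:bd_characterization} is precisely the statement that neither ray's Laplacian is essentially self-adjoint.

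The step requiring care is the limit comparison: one must verify that, under recurrence, the tail of $v(1)+C\sum_{k=1}^r 1/b(k,k+1)$ is comparable to $\sum_{k=0}^r 1/b(k,k+1)$ up to a positive constant and finitely many initial terms, uniformly over the non-constant harmonic $v$ --- this is exactly what decouples the two ends, and it is here (and nowhere else) that recurrence, i.e.\ divergence of both resistance sums, is used; for a transient ray the relevant partial sums converge and the comparison collapses, which is why the transient case needs a different treatment. It is also worth stating explicitly that the $\ell^2$-Liouville property only asks whether harmonic $\ell^2$ functions are constant, so whether the constants lie in $\ell^2(\Z,m)$ plays no role, and that recurrence is what allows us to argue with genuinely harmonic functions rather than $\lambda$-harmonic ones for $\lambda<0$. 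Everything past the comparison step is bookkeeping with Lemma~\ref{l:double_harmonic}, Corollary~\ref{c:v_square_sum} and Theorem~\ref{t:bd_characterization}.
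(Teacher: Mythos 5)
Your proposal is correct and follows essentially the same route as the paper: one direction is the explicit construction of a non-constant harmonic function pinned to $0$ at the origin, whose membership in $\ell^2(\Z,m)$ is exactly the convergence of the two Hamburger series, and the other direction is the limit comparison test, which is valid precisely because recurrence forces both resistance sums $\sum_k 1/b(k,k+1)$ and $\sum_k 1/b(-k,-k-1)$ to diverge, making the partial sums of $v$ comparable to the partial sums in Hamburger's criterion. The paper phrases the first implication as a citation of the preceding proposition (noting it needs no recurrence) and the second as its contrapositive, but the substance is identical to what you wrote.
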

\begin{proof}
One implication is contained in the above and does not require recurrence. For the other implication,
we suppose that $L_c^{(\infty)}$ is essentially self-adjoint. 
Let $v$ be a harmonic function. As recurrence is equivalent
to $\sum_{k=0}^\infty 1/b(k,k+1)= \infty = \sum_{k=0}^\infty 1/b(-k,-k-1)$ 
it follows
that if $C \neq 0$, then the fact that $L_c^{(\infty)}$
is essentially self-adjoint is equivalent to 
$$\sum_{r = 1}^\infty\left(v(1) + C \sum_{k=1}^r \frac{1}{b(k,k+1)}\right)^2
m(r+1)=\infty$$
by Theorem~\ref{t:star-like_characterization} and the limit comparison test.
This would imply that $v$ is not in $\ell^2(\Z,m)$ by Corollary~\ref{c:v_square_sum} 
which implies the $\ell^2$-Liouville property as $C$ must then be $0$.
An analogous argument works if $L_c^{(-\infty)}$ is not essentially self-adjoint.
\end{proof}

We will now consider the transient case.
We first recall some basic facts concerning the 
construction of the Green's function.

We let $x_0 \in X$ and let $X_n$ denote a sequence of nested finite subsets of $X$
such that $\bigcup_n X_n = X$ and so that the subgraphs induced by $X_n$
are connected. We call such a sequence an \emph{exhaustion} of the graph.
We assume that $x_0 \in X_1$, recall that
$$\partial X_n = \{ x \in X_n \mid \textup{ there exists } y \not \in X_n \textup{ such that }
b(x,y)>0\} $$
denotes the boundary of $X_n$ and $\mathring{X}_n = X_n \setminus \partial 
X_n$ denotes the interior of $X_n$.
We then consider solutions to the equation
$$
\begin{cases}
\Delta g_n = \widehat{1}_{x_0} & \textup{ on } \mathring{X}_n \\
g_n = 0 & \textup{ on } \partial X_n
\end{cases}
$$
where $\widehat{1}_{x_0} = 1_{x_0}/m(x_0)$.
By some easy minimum principle arguments, we obtain 
$$0 < g_n \leq g_n(x_0) \qquad \textup{ and } \qquad g_n \leq g_{n+1}$$
on $\mathring{X}_n$.
Transience is then equivalent to the finiteness of the limit
as $n \to \infty$. If  the limit exists, then it is called the \emph{Green's function with pole at $x_0$}
which we denote by $g_{x_0}$ or by $g$, i.e., $g_n \to g$ whenever the graph
is transient.
In particular, 
$g$ satisfies
$$\De g = \widehat{1}_{x_0} \qquad \textup{ 
and } \qquad g(x_0) \geq g(x)$$ for all $x \in X$.
Further arguments give that $g(x_0) > g(x)$ for all $x \neq x_0$
and that the construction above is independent of the choice
of the exhaustion sequence.

In the case of birth-death chains, the Green's function takes a form that we have already
seen in Corollary~\ref{c:vanishing}.
\begin{example}[Green's function on birth-death chains]
Let $(\N_0,b,m)$ be a transient birth-death chain. Then, the Green's function with pole at $0$ takes the
form
$$g(r) = \sum_{k=r}^\infty \frac{1}{b(k,k+1)}.$$
\end{example}

We will next use the material above to characterize the
$\ell^2$-Liouville property on stars with two rays in the transient case.
Unlike the recurrent case where all harmonic functions are unbounded
at both ends, 
the transient case is a bit more involved as a star
with two rays is transient if and only if either $\sum_{k} 1/b(k,k+1)< \infty$ or
$\sum_{k}1/b(-k,-k-1) < \infty$ or both. Thus, it makes sense to split the
characterization into cases.

We will say that a star with two 
rays is \emph{transient at $\infty$} if 
$$\sum_{k=0}^\infty \frac{1}{b(k,k+1)}< \infty$$ and \emph{transient at $-\infty$} if
$$\sum_{k=0}^\infty \frac{1}{b(-k,-k-1)}< \infty.$$
Furthermore, we will say that the \emph{Green's function is in $\ell^2$ at infinity}, or $g \in \ell^2(\infty),$ if
$$\sum_{r=0}^\infty \left(\sum_{k=r}^\infty \frac{1}{b(k,k+1)} \right)^2 m(r) < \infty$$
and the \emph{Green's function is in $\ell^2$ at negative infinity}, or $g \in \ell^2(-\infty)$, if
$$\sum_{r=0}^\infty \left(\sum_{k=r}^\infty \frac{1}{b(-k,-k-1)} \right)^2 m(-r) < \infty.$$
Finally, we write $m(\infty) < \infty$ whenever $\sum_{r=0}^\infty m(r)< \infty$ and
$m(-\infty) < \infty$ whenever $\sum_{r=0}^\infty m(-r)< \infty.$

With these notations, we now start our characterization for the transient case.
First we consider the case of transience at both ends. In this case, all harmonic
functions are bounded and we only have to consider if they vanish at either end or not.
We illustrate this as well as the Green's function in Figure~\ref{fig:M3}.

\begin{figure} \centering
 \centering 
\begin{tikzpicture}[scale=0.50]
\begin{axis}[
xlabel={},
ylabel={},
xmin=-10, xmax=10,
ymin=-200, ymax=200,
 yticklabel=\empty,
  xticklabel=\empty,
]
\addplot+[y,mark=square*  ,mark options={solid,fill=blue}, domain=1:10, samples at={-0.8, -0.35, 0.35, 0.8},circle, blue] {atan(x) };
    \addplot+[y,mark=square*  ,mark options={solid,fill=blue}, domain=1:10, samples at={0,0.8,..., 10},circle, blue] {atan(x) };
     \addplot+[y,mark=square*,mark options={solid,fill=blue},  domain=1:10, samples at={0,-0.8,..., -10},circle, blue] {atan(x) };
      \addplot+[y ,mark=-,samples at={-10, -9.2,...,10},red] {-94};
       \addplot+[y ,mark=-,samples at={-10, -9.2,...,10},red] {94};
\end{axis}
\end{tikzpicture}  
\qquad
\centering
\begin{tikzpicture}[scale=0.50]
\begin{axis}
[
xlabel={},
ylabel={},
xmin=-10, xmax=10,
ymin=0, ymax=6,
 yticklabel=\empty,
  xticklabel=\empty,
]
\addplot+[y,mark=square*,mark options={solid,fill=blue}, domain=1:10, samples at={0,0.3, 0.8},circle, blue]{3*pow(2,-x)};
\addplot+[y,mark=square*,mark options={solid,fill=blue}, domain=1:10, samples at={0,-0.3, -0.8},circle, blue]{3*pow(2,x)};
\addplot+[y,mark=square*,mark options={solid,fill=blue}, domain=1:10, samples at={0.8,1.8,..., 10},circle, blue]{3*pow(2,-x)};
\addplot+[y,mark=square*,mark options={solid,fill=blue}, domain=1:10, samples at={-0.8,-1.8,..., -10},circle, blue]{3*pow(2,x)};
\end{axis}
\end{tikzpicture}
\caption{Harmonic (left) and Green's (right) functions: transience at $\pm \infty$} \label{fig:M3}
\end{figure}

\begin{proposition}[Characterization of $\ell^2$-Liouville: transience at $\pm \infty$]\label{p:transient_both_ends}
Let $(\Z,b,m)$ be a star with two rays which is transient at both $\infty$ and $-\infty$.
Then, $(\Z,b,m)$ does not satisfy the $\ell^2$-Liouville property if and only if at least one of the following
conditions holds:
\begin{enumerate}
\item $g \in \ell^2(\infty)$ and $m(-\infty)< \infty$.
\item $g \in \ell^2(-\infty)$ and $m(\infty)< \infty$.
\item $m(\Z)<\infty$.
\end{enumerate}
\end{proposition}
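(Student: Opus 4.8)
The plan is to parametrize all harmonic functions on $(\Z,b,m)$ and then read off exactly when a non-constant one lies in $\ell^2(\Z,m)$. By Lemma~\ref{l:double_harmonic} a harmonic function $v$ is determined by the pair $(v(0),C)$ with $C=b(0,1)(v(1)-v(0))$, and $v$ is non-constant precisely when $C\neq 0$. Since the graph is transient at both ends, the tail sums $S_+=\sum_{k=0}^\infty 1/b(k,k+1)$ and $S_-=\sum_{k=0}^\infty 1/b(-k,-k-1)$ are finite. Unwinding the three conditions of Lemma~\ref{l:double_harmonic} gives the closed forms $L_+:=v(0)+CS_+$ and $L_-:=v(0)-CS_-$ for the limits $\lim_{r\to\infty}v(r)$ and $\lim_{r\to-\infty}v(r)$, together with $v(r)=L_+-Cg(r)$ for $r\ge 0$ and $v(-r)=L_-+Cg^-(r)$ for $r\ge 0$, where $g(r)=\sum_{k=r}^\infty 1/b(k,k+1)$ and $g^-(r)=\sum_{k=r}^\infty 1/b(-k,-k-1)$ are the Green's functions of the two rays, as recalled for birth-death chains above and in Corollary~\ref{c:vanishing}.

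Next I would record the elementary dichotomy governing membership in $\ell^2(\infty)$ when $C\neq 0$: since $g(r)\to 0$, the quantity $(L_+-Cg(r))^2$ is bounded and tends to $L_+^2$, so $\sum_{r\ge 1}(L_+-Cg(r))^2 m(r)<\infty$ if and only if either $L_+\neq 0$ and $m(\infty)<\infty$, or $L_+=0$ and $C^2\sum_r g(r)^2 m(r)<\infty$; in the latter case, as $C\neq 0$, this is exactly $g\in\ell^2(\infty)$. The $r=0$ term is always finite and hence irrelevant. By the symmetric statement at $-\infty$ and Corollary~\ref{c:v_square_sum}, a non-constant harmonic $v$ lies in $\ell^2(\Z,m)$ if and only if, with $C\neq 0$, both of the following hold: $m(\infty)<\infty$ or ($L_+=0$ and $g\in\ell^2(\infty)$); and $m(-\infty)<\infty$ or ($L_-=0$ and $g\in\ell^2(-\infty)$).

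The rigidity that makes this collapse to conditions (1)--(3) is the identity $L_+-L_-=C(S_++S_-)$ with $S_++S_->0$, so $L_+$ and $L_-$ cannot both vanish when $C\neq 0$. Hence exactly one of three cases occurs for a non-constant $v$: (a) $L_+\neq 0$ and $L_-\neq 0$, where $v\in\ell^2(\Z,m)$ iff $m(\Z)<\infty$; (b) $L_+=0$ (forcing $L_-\neq 0$), where $v\in\ell^2(\Z,m)$ iff $g\in\ell^2(\infty)$ and $m(-\infty)<\infty$; (c) $L_-=0$ (forcing $L_+\neq 0$), where $v\in\ell^2(\Z,m)$ iff $g\in\ell^2(-\infty)$ and $m(\infty)<\infty$. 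This yields the forward direction at once: a non-constant square summable harmonic function falls into (a), (b) or (c) and produces condition (3), (1) or (2) respectively. For the converse I would exhibit the witnessing function: given (3) take $C=1$, $v(0)=0$, so $L_\pm=\pm S_\mp\neq 0$ and $m(\Z)<\infty$ closes both ends; given (1) take $C=1$, $v(0)=-S_+$, so $L_+=0$ and $v=-g$ on $\{0,1,2,\dots\}$ (non-constant since $g(0)=S_+>0$ while $g(r)\to 0$), square summable there by $g\in\ell^2(\infty)$, while $L_-=-(S_++S_-)\neq 0$ is handled by $m(-\infty)<\infty$; and symmetrically for (2) with $C=1$, $v(0)=S_-$.

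The routine parts are the index bookkeeping relating $(v(0),v(1),C)$ to $L_\pm$ and to $g,g^-$, and the remark that altering finitely many terms does not affect $\ell^2$ membership. The one point needing care---the main obstacle---is making the dichotomy of the second paragraph fully rigorous when $L_+\neq 0$: one uses that $g$ is bounded (so the tail is finite once $m(\infty)<\infty$) and that $g(r)\to 0$ forces $(L_+-Cg(r))^2\ge L_+^2/2$ for all large $r$ (so the tail diverges once $m(\infty)=\infty$), together with tracking the borderline situations in which several of (1)--(3) hold simultaneously, which only makes the ``if'' direction easier.
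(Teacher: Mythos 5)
Your proof is correct and takes essentially the same route as the paper's: both use Lemma~\ref{l:double_harmonic} to see that a non-constant harmonic function is monotonic with finite limits $L_\pm$ at the two ends, then split into the three cases according to which (if either) limit vanishes and apply Corollaries~\ref{c:vanishing}~and~\ref{c:v_square_sum}. Your explicit identity $L_+-L_-=C(S_++S_-)\neq 0$, which rules out both limits vanishing, and your concrete witnesses for the converse are details the paper leaves implicit, but they do not change the argument.
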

\begin{proof}
Let $v$ be a non-constant harmonic function on $(\Z,b,m)$. By Lemma~\ref{l:double_harmonic} and the fact that
the graph is transient at both $\infty$ and $-\infty$, it follows that $v$ must be
bounded and monotonic. Thus, there are only the following possibilities for the behavior of $v$
at $\pm \infty$: 
\begin{itemize}
\item If $v(r) \to 0$ as $r \to \infty$ and $v(r) \to K \neq 0$ as $r \to -\infty$, then $v \in \ell^2(X,m)$
if and only if (1) holds by Corollary~\ref{c:vanishing}.
\item  If $v(r) \to 0$ as $r \to -\infty$ and $v(r) \to K \neq 0$ as $r \to \infty$, then $v \in \ell^2(X,m)$
if and only if (2) holds by Corollary~\ref{c:vanishing}.
\item If $v(r) \to K_1 \neq 0$ as $r \to \infty$ and $v(r) \to K_2 \neq 0$ as $r \to -\infty$, then $v \in
\ell^2(X,m)$ if and only if (3) holds.
\end{itemize}
This completes the proof.
\end{proof}
\begin{remark}
We note that (3) holds if and only if both (1) and (2) hold.
\end{remark}

In the other cases of transience, one has unbounded harmonic functions. 
We may visualize them and the Green's function as in Figure~\ref{fig:M5}.

\begin{figure} \centering
 \centering 
\begin{tikzpicture}[scale=0.50]
\begin{axis}
[
xlabel={},
ylabel={},
xmin=-50, xmax=50,
ymin=-100, ymax=100,
 yticklabel=\empty,
  xticklabel=\empty,
]
    
    \addplot+[y,mark=square*  ,mark options={solid,fill=blue}, domain=1:10, samples at={3.5,7.5,...,50},circle, blue] {-atan(x) +63 };
\addplot+[y, mark=square*,,mark options={solid,fill=blue}, domain=1:10, samples at={8,0,-4,...,-50}, circle, blue]{-1.9*x-4};
\end{axis}
\end{tikzpicture}
\qquad
\centering
\begin{tikzpicture}[scale=0.50]
\begin{axis}
[
xlabel={},
ylabel={},
xmin=-10, xmax=10,
ymin=0, ymax=6,
 yticklabel=\empty,
  xticklabel=\empty,
]
\addplot+[y,mark=square*,mark options={solid,fill=blue}, domain=1:10, samples at={0,0.3, 0.8},circle, blue]{3*pow(2,-x)};
\addplot+[y,mark=square*,mark options={solid,fill=blue}, domain=1:10, samples at={0.8,1.8,..., 10},circle, blue]{3*pow(2,-x)};
\addplot+[y,mark=square*,mark options={solid,fill=blue}, domain=1:10, samples at={0.5,1,1.4},circle, blue]{3*pow(2,-x)};
\addplot+[y,mark=square*,mark options={solid,fill=blue}, domain=1:10, samples at={-0.8,-1.8,..., -10},circle, blue]{3};
\end{axis}
\end{tikzpicture}
\caption{Harmonic (left) and Green's (right) functions: transience at $\infty$} \label{fig:M5}
\end{figure}

In this case,
we provide connections to essential self-adjointness. Here, we again denote by $L_{c}^{(\infty)}$
the Laplacian on the positive part of the graph and by $L_{c}^{(-\infty)}$ the Laplacian
on the negative part. We then have the following results.
\begin{proposition}[Characterization of $\ell^2$-Liouville: transience at $\infty$]\label{p:transient_one_end}
Let $(\Z,b,m)$ be a star with two rays which is transient only at $\infty$.
Then, $(\Z,b,m)$ does not satisfy the $\ell^2$-Liouville property if and only if
$L_c^{(-\infty)}$ is not essentially self-adjoint and at least one of the following holds:
\begin{enumerate}
\item $g \in \ell^2(\infty)$.
\item $m(\infty)<\infty$.
\end{enumerate}
\end{proposition}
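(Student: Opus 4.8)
The plan is to classify the possible end behaviors of a non-constant harmonic function $v$ on $(\Z,b,m)$ via Lemma~\ref{l:double_harmonic}, Corollary~\ref{c:vanishing} and Corollary~\ref{c:v_square_sum}, exactly as in the proof of Proposition~\ref{p:transient_both_ends}, and then to match these against conditions (1) and (2). Write $C=b(0,1)(v(1)-v(0))$. If $v$ is non-constant then $C\neq 0$ and, by Lemma~\ref{l:double_harmonic}, $v$ is strictly monotonic. Since the chain is transient at $\infty$, the partial sums $\sum_{k=1}^r b^{-1}(k,k+1)$ converge, so $v$ is bounded at $\infty$ and $v(r)$ tends to a finite limit $K$ as $r\to\infty$. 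Since the chain is not transient at $-\infty$, the partial sums $\sum_{k=1}^r b^{-1}(-k,-k-1)$ diverge, so $v$ is unbounded at $-\infty$. By scale invariance of $\ell^2$-membership we may normalize $C=1$; then the only free parameter is $v(1)$, and $K=v(1)+\sum_{k=1}^\infty b^{-1}(k,k+1)$.

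The first key step is to observe that, for any non-constant $v$, one has $v\in\ell^2(-\infty)$ if and only if $L_c^{(-\infty)}$ is not essentially self-adjoint. Indeed, writing $a_r=\sum_{k=1}^r b^{-1}(-k,-k-1)\to\infty$, the term $(v(-1)-C a_r)^2$ is asymptotic to $C^2 a_r^2$, so by the limit comparison test Corollary~\ref{c:v_square_sum} shows that $v\in\ell^2(-\infty)$ is equivalent to $\sum_r a_r^2\, m(-r-1)<\infty$; up to the finitely many initial terms and the harmless constant $b^{-1}(0,-1)$ added to each $a_r$, this is precisely the negation of Hamburger's criterion for the negative ray, i.e.\ the negation of condition (iii) of Theorem~\ref{t:star-like_characterization} with $X_1=\{0\}$. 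Note that this statement does not depend on the particular choice of non-constant $v$.

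The second key step concerns the end $\infty$, where two cases arise. If $v(r)\to 0$, then by Corollary~\ref{c:vanishing} we have $v(r)=-C\sum_{k=r}^\infty b^{-1}(k,k+1)$ for $r\geq 2$, so $v$ agrees with $-Cg$ on the positive ray up to finitely many vertices and $v\in\ell^2(\infty)$ if and only if $g\in\ell^2(\infty)$, which is condition (1). If instead $v(r)\to K\neq 0$, then $v^2(r)\to K^2>0$, so $v\in\ell^2(\infty)$ if and only if $\sum_r m(r)<\infty$, i.e.\ $m(\infty)<\infty$, which is condition (2). For the converse direction, assume $L_c^{(-\infty)}$ is not essentially self-adjoint: if (1) holds, take the harmonic function with $C=1$ and $v(1)=-\sum_{k=1}^\infty b^{-1}(k,k+1)$, so that $v(r)\to 0$; if (2) holds, take the harmonic function with $C=1$ and $v(1)=0$, so that $v(r)\to\sum_{k=1}^\infty b^{-1}(k,k+1)\neq 0$ (using transience at $\infty$). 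In either case $v$ is non-constant, lies in $\ell^2(\infty)$ by the above and in $\ell^2(-\infty)$ by the first step, hence $v\in\ell^2(\Z,m)$ and the $\ell^2$-Liouville property fails. Conversely, if the $\ell^2$-Liouville property fails, pick a non-constant harmonic $v\in\ell^2(\Z,m)$; then $v\in\ell^2(-\infty)$ forces $L_c^{(-\infty)}$ not essentially self-adjoint by the first step, and $v\in\ell^2(\infty)$ forces (1) or (2) according to whether the limit of $v$ at $\infty$ is zero or not. Combining both directions yields the equivalence.

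I do not anticipate a serious obstacle: the argument is essentially bookkeeping organized around the classification of monotone harmonic functions on $\Z$. The points requiring care are the limit comparison step identifying $\ell^2(-\infty)$-membership with the failure of Hamburger's criterion for $L_c^{(-\infty)}$, in particular tracking the index shifts between Corollary~\ref{c:v_square_sum} and Theorem~\ref{t:star-like_characterization}, and verifying that the harmonic function built in the converse direction really has the claimed limit at $\infty$, which is exactly where transience at $\infty$ enters.
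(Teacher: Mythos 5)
Your proposal is correct and follows essentially the same route as the paper's proof: reduce the $\ell^2(-\infty)$ membership of any non-constant harmonic function to the failure of Hamburger's criterion for $L_c^{(-\infty)}$ via the limit comparison test (valid precisely because the negative-ray partial sums diverge), and then split the behavior at $\infty$ into the cases $v(r)\to 0$ (matching $g\in\ell^2(\infty)$) and $v(r)\to K\neq 0$ (matching $m(\infty)<\infty$) using Corollaries~\ref{c:vanishing} and~\ref{c:v_square_sum}. You supply the explicit constructions for the converse direction that the paper leaves implicit, and your bookkeeping of the index shift and the extra $b^{-1}(0,-1)$ term is accurate.
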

\begin{proof}
Let $v$ be a non-constant harmonic function. By Lemma~\ref{l:double_harmonic} and the fact that
the graph is transient at only $\infty$, we have that $v$ must be monotonic, 
bounded at $\infty$ and unbounded at $-\infty$. 
By Theorem~\ref{t:star-like_characterization}, $L_c^{(-\infty)}$ is not essentially self-adjoint if and only if
$$\sum_{r=1}^\infty \left( \sum_{k=1}^r \frac{1}{b(-k,-k-1)}\right)^2 m(-r-1)
<\infty$$
which, by the limit comparison test, is equivalent to 
$$\sum_{r = 1}^\infty\left(v(-1) - C \sum_{k=1}^r \frac{1}{b(-k,-k-1)}\right)^2
m(-r-1)<\infty$$
where $C= b(0,1)(v(1)-v(0))\neq 0$ as $v$ is non-constant.
Now, the result follows by considering the cases of $v(r) \to 0$ as $r \to \infty$
or $v(r) \to K \neq 0$ as $r \to \infty$ and using Corollaries~\ref{c:vanishing}~and~\ref{c:v_square_sum}. This completes the proof.
\end{proof}

Finally, by symmetry, we obtain the following result.
\begin{proposition}[Characterization of $\ell^2$-Liouville: transience at $-\infty$]\label{p:transient_other_end}
Let $(\Z,b,m)$ be a star with two rays which is transient only at $-\infty$.
Then, $(\Z,b,m)$ does not satisfy the $\ell^2$-Liouville property if and only if
$L_c^{(\infty)}$ is not essentially self-adjoint and at least one of the following holds:
\begin{enumerate}
\item $g \in \ell^2(-\infty)$.
\item $m(-\infty)<\infty$.
\end{enumerate}
\end{proposition}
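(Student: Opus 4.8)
The plan is to obtain this as the mirror image of Proposition~\ref{p:transient_one_end} under the reflection $r \mapsto -r$ on $\Z$. First I would observe that relabeling the vertex set of a star with two rays leaves the $\ell^2$-Liouville property unchanged, while it interchanges the two rays: transience only at $-\infty$ becomes transience only at $\infty$, the Laplacian $L_c^{(\infty)}$ on the positive part becomes $L_c^{(-\infty)}$ on the negative part, the condition $g \in \ell^2(-\infty)$ becomes $g \in \ell^2(\infty)$, and $m(-\infty)<\infty$ becomes $m(\infty)<\infty$. Applying Proposition~\ref{p:transient_one_end} to the reflected graph then immediately yields the stated equivalence.

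If one prefers a self-contained argument, I would rerun the proof of Proposition~\ref{p:transient_one_end} verbatim with the roles of $\infty$ and $-\infty$ exchanged. Let $v$ be a non-constant harmonic function and set $C=b(0,1)(v(1)-v(0)) \neq 0$. By Lemma~\ref{l:double_harmonic} together with transience only at $-\infty$, the function $v$ is monotonic, unbounded at $\infty$ and bounded at $-\infty$. On the unbounded ($\infty$) side, Theorem~\ref{t:star-like_characterization} and the limit comparison test give that $L_c^{(\infty)}$ fails to be essentially self-adjoint precisely when $\sum_{r=1}^\infty \big(v(1) + C\sum_{k=1}^r 1/b(k,k+1)\big)^2 m(r+1) < \infty$. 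On the bounded ($-\infty$) side there are only two possibilities: either $v(r) \to 0$ as $r \to -\infty$, in which case $v$ coincides on the negative ray with a non-zero multiple of the Green's function there by Corollary~\ref{c:vanishing}, so $v \in \ell^2(-\infty)$ is equivalent to $g \in \ell^2(-\infty)$; or $v(r) \to K \neq 0$ as $r \to -\infty$, in which case $v \in \ell^2(-\infty)$ is equivalent to $m(-\infty)<\infty$. Combining the two ends via Corollary~\ref{c:v_square_sum} shows that a non-constant harmonic function in $\ell^2(\Z,m)$ exists if and only if $L_c^{(\infty)}$ is not essentially self-adjoint and at least one of (1), (2) holds.

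There is essentially no obstacle here, since every ingredient has already been assembled for Proposition~\ref{p:transient_one_end}; the only point requiring (minor) care is to confirm that the reflection $r \mapsto -r$ genuinely interchanges all the quantities listed above — in particular that it swaps $L_c^{(\infty)}$ with $L_c^{(-\infty)}$ and the two Green's function conditions $g \in \ell^2(\infty)$ and $g \in \ell^2(-\infty)$ — which is immediate from their definitions, together with the fact that a harmonic function vanishing at the $-\infty$ end is a non-zero multiple of the Green's function for the negative ray as recorded in Corollary~\ref{c:vanishing}.
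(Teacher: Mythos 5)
Your proposal is correct and matches the paper exactly: the paper's entire proof of this proposition is the phrase ``by symmetry,'' referring to precisely the reflection $r \mapsto -r$ that interchanges the two ends and reduces the statement to Proposition~\ref{p:transient_one_end}. Your explicit verification that the reflection swaps $L_c^{(\infty)}$ with $L_c^{(-\infty)}$, $g \in \ell^2(\pm\infty)$, and $m(\pm\infty)$ is exactly the content the authors leave implicit.
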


\begin{remark}[Characterization of $\ell^2$-Liouville]
We note that by combining Propositions~\ref{p:recurrent_rays},~\ref{p:transient_both_ends},
~\ref{p:transient_one_end},~and~\ref{p:transient_other_end}, we obtain
a full characterization of the $\ell^2$-Liouville in the case of a star with two rays.
\end{remark}

We finish this subsection by strengthening the result which states that for general graphs
the $\ell^2$-Liouville property along with infinite measure and strict positivity of the Laplacian
implies essential self-adjointness for the case of transient star-like graphs where 
all birth-death chains are rays. We will replace the infinite measure and strict positivity
assumptions by assuming that the Green's function is in $\ell^2$.

For this result to hold, we will need the fact that if the Green's function
is in $\ell^2$ for one pole, then it is in $\ell^2$ for all poles. This 
is shown in the next lemma.
\begin{lemma}\label{l:one_all}
Let $(X,b,m)$ be a transient graph. If $g_{x_0} \in \ell^2(X,m)$ for some
$x_0 \in X$, then $g_{x} \in \ell^2(X,m)$ for all $x \in X$.
\end{lemma}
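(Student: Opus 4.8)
The plan is to reduce the statement to a pointwise comparison between Green's functions with different poles. Precisely, I would show that for every $x\in X$ there is a constant $C=C(x,x_0)>0$ with
$g_x\le C\,g_{x_0}$ on all of $X$. Once that is established, $g_{x_0}\in\ell^2(X,m)$ immediately yields
$\sum_{y}g_x^2(y)m(y)\le C^2\sum_{y}g_{x_0}^2(y)m(y)<\infty$, i.e.\ $g_x\in\ell^2(X,m)$, which is the assertion. We may of course assume $x\neq x_0$, the case $x=x_0$ being trivial.

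To obtain the comparison I would fix an exhaustion $(X_n)$ as in the construction recalled above, chosen so that $x_0,x\in\mathring{X}_n$ for all large $n$, and use that $g_{x_0,n}\uparrow g_{x_0}$ and $g_{x,n}\uparrow g_x$ pointwise, where $g_{\bullet,n}$ solves $\Delta g_{\bullet,n}=\widehat{1}_{\bullet}$ on $\mathring{X}_n$ with $g_{\bullet,n}=0$ on $\partial X_n$ and $g_{\bullet,n}>0$ on $\mathring{X}_n$. Working on a fixed such $X_n$, I would let $\omega_n$ be the unique function on $X_n$ that is harmonic on $\mathring{X}_n\setminus\{x\}$, equals $1$ at $x$ and $0$ on $\partial X_n$; by the minimum and maximum principles on the finite connected graph $X_n$ one has $0\le\omega_n\le 1$, and by uniqueness for the finite Dirichlet problem $g_{x,n}=g_{x,n}(x)\,\omega_n$ on $X_n$, since both sides are harmonic off $x$ and agree at $x$ and on $\partial X_n$. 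Then I would set $\psi_n=g_{x_0,n}/g_{x_0,n}(x)$, which is well defined because $g_{x_0,n}(x)>0$; it is nonnegative, satisfies $\Delta\psi_n=\widehat{1}_{x_0}/g_{x_0,n}(x)\ge 0$ on $\mathring{X}_n$, vanishes on $\partial X_n$, and $\psi_n(x)=1$.

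The comparison then follows from one maximum principle estimate. On $\mathring{X}_n\setminus\{x\}$ we have $\Delta(\omega_n-\psi_n)=-\Delta\psi_n\le 0$, so $\omega_n-\psi_n$ obeys the maximum principle on that set; since $X_n$ is the disjoint union of $\mathring{X}_n\setminus\{x\}$ and $\partial X_n\cup\{x\}$, and $\omega_n-\psi_n$ vanishes on $\partial X_n\cup\{x\}$, I would conclude $\omega_n\le\psi_n$ on $X_n$. Hence $g_{x,n}=g_{x,n}(x)\,\omega_n\le\bigl(g_{x,n}(x)/g_{x_0,n}(x)\bigr)g_{x_0,n}$ on $X_n$, and this holds trivially off $X_n$ with the convention $g_{\bullet,n}=0$ there. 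Letting $n\to\infty$ and using $g_{x,n}(x)\to g_x(x)<\infty$ together with $g_{x_0,n}(x)\to g_{x_0}(x)>0$ (positivity of $g_{x_0}$ on a connected transient graph), one gets $g_x\le C\,g_{x_0}$ with $C=g_x(x)/g_{x_0}(x)$, as desired.

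The conceptual content is the domination principle for potentials: $g_{x_0}$ is a potential whose Laplacian is supported at the single vertex $x_0$, so any nonnegative superharmonic function dominating it at $x_0$ dominates it everywhere; applied to $v=C\,g_{x_0}$ and $p=g_x$ this is exactly the inequality above, and giving it directly via the exhaustion is cleaner than quoting the principle. I expect the only real care to be needed in the bookkeeping of the exhaustion — arranging that the poles lie in the interiors $\mathring{X}_n$ and that the monotone limits of $g_{\bullet,n}$ are the Green's functions in question — while the comparison itself is essentially a one-line argument.
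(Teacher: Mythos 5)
Your proof is correct and follows essentially the same route as the paper: both establish the pointwise domination $g_x \le C\, g_{x_0}$ by comparing the approximating Green's functions of an exhaustion via a maximum/minimum principle and then passing to the limit. The only difference is cosmetic --- you normalize at the single pole $x$ to obtain the explicit constant $C = g_x(x)/g_{x_0}(x)$, whereas the paper fixes $C$ by comparison on a finite connected set $\Omega$ containing both poles and applies the minimum principle to $C g_{x_0,n}-g_{x,n}$ on $X_n \setminus \Omega$.
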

\begin{proof}
Let $x \in X$ and let $\Omega \subseteq X$
be a finite set such that $x_0, x \in \Omega$ and so that $\Omega$
induces a connected subgraph. We denote
the corresponding Green's functions with poles at $x_0$ and $x$ by
$g_{x_0}$ and $g_{x}$, respectively. We choose an exhaustion $(X_n)$ such that $\Omega \subseteq X_1$
and let $g_{x_0,n}$ and $g_{x,n}$ denote the sequences of Green's functions
associated to the exhaustion.

As $g_{x_0,n}, g_{x,n} > 0$ on $\Omega$ and $\Omega$ is finite, we can choose a constant $C>0$ such that
$$ C g_{x_0,1} \geq g_{x} \qquad \textup{ on } \Omega.$$
By the monotonicity of the Green's functions in the exhaustion sequences, this implies
$$ C g_{x_0,n} \geq g_{x,n} \qquad \textup{ on } \Omega$$
for all $n \in \N$. Let $u_n = C g_{x_0,n} - g_{x,n}$ and observe that $u_n$ satisfies
$$
\begin{cases}
\Delta u_n = 0 & \textup{ on } \mathring{X}_n \setminus \Omega \\
u_n = 0 & \textup{ on } \partial X_n \\
u_n \geq 0 & \textup{ on } \partial \Omega
\end{cases}
$$
which then implies $u_n \geq 0$ on $X_n \setminus \Omega$ by a minimum principle.
Taking the limit as $n \to \infty$ now gives $C g_{x_0} \geq g_{x}$ and
thus $g_{x} \in \ell^2(X,m)$ as we assume that $g_{x_0} \in \ell^2(X,m)$.
This completes the proof.
\end{proof}

Thus, in case that $g_x \in \ell^2(X,m)$ for some $x \in X$, we may simply write
$g \in \ell^2(X,m)$.
We then have the following result.
\begin{theorem}\label{t:transient_Liouv}
Let $(X,b,m)$ be a transient star-like graph where all birth-death chains are rays. 
If $(X,b,m)$ satisfies the $\ell^2$-Liouville property and $g \in \ell^2(X,m)$, 
then $L_c$ is essentially self-adjoint.
\end{theorem}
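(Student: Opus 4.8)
The plan is to prove the contrapositive: assuming $L_c$ is not essentially self-adjoint (but $g\in\ell^2(X,m)$), I will produce a non-constant harmonic function in $\ell^2(X,m)$, which contradicts the $\ell^2$-Liouville property. By Theorem~\ref{t:star-like_characterization}, the failure of essential self-adjointness gives an index $i_0\in I$ with
$$\sum_{r=0}^\infty\left(\sum_{k=0}^r\frac{1}{b(k^{(i_0)},(k+1)^{(i_0)})}\right)^2 m\big((r+1)^{(i_0)}\big)<\infty .$$
Since every birth-death chain is a ray, exactly one edge joins $\N_0^{(i_0)}$ to $X_1$, and (picturing the ray as hanging off $X_1$ by its endpoint, as in the examples, or else shifting the construction below to the vertex carrying the connecting edge) I take this edge to be $(0^{(i_0)},y)$ with $y\in X_1$, so that $0^{(i_0)}$ has exactly the neighbours $1^{(i_0)}$ and $y$.

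Next I build a function with a single source at $0^{(i_0)}$. By Lemma~\ref{l:harmonic} applied to $(\N_0^{(i_0)},b,m)$, take $v$ on the ray with $v(0^{(i_0)})=0$, $v(1^{(i_0)})>0$ and $C:=b(0^{(i_0)},1^{(i_0)})v(1^{(i_0)})>0$; by the displayed convergence, $v$ is strictly increasing and $v\in\ell^2(\N_0^{(i_0)},m)$. Extend $v$ by $0$ off $\N_0^{(i_0)}$. A short computation—using that $v$ vanishes off the ray and that $v(y)=0=v(0^{(i_0)})$—shows $v\in\ms{F}$, that $\Delta v$ vanishes at every vertex except $0^{(i_0)}$, and that $\Delta v(0^{(i_0)})=-C/m(0^{(i_0)})$, i.e.\ $\Delta v=-C\,\widehat{1}_{0^{(i_0)}}$ on $X$. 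Since the graph is transient, the Green's function $g=g_{0^{(i_0)}}$ with pole at $0^{(i_0)}$ exists, with $\Delta g=\widehat{1}_{0^{(i_0)}}$, $g>0$, $g(0^{(i_0)})>g(x)$ for all $x\neq 0^{(i_0)}$, and $g\in\ell^2(X,m)$ by the hypothesis together with Lemma~\ref{l:one_all}. Put $h:=Cg+v$. Then $h\in\ms{F}\cap\ell^2(X,m)$ and $\Delta h=C\widehat{1}_{0^{(i_0)}}-C\widehat{1}_{0^{(i_0)}}=0$, so $h$ is a harmonic function in $\ell^2(X,m)$.

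The step I expect to be the main obstacle is showing that $h$ is not constant — that the combination $Cg+v$ does not collapse. Both $g$ and $v$ are harmonic at $r^{(i_0)}$ for every $r\ge 1$, so Lemma~\ref{l:harmonic} applies to both restrictions to the ray and, combining the two formulas,
$$h\big((r+1)^{(i_0)}\big)=h(1^{(i_0)})+C\,(1+C_g)\sum_{k=1}^{r}\frac{1}{b(k^{(i_0)},(k+1)^{(i_0)})},\qquad C_g:=b(0^{(i_0)},1^{(i_0)})\big(g(1^{(i_0)})-g(0^{(i_0)})\big).$$
Reading the equation $\Delta g(0^{(i_0)})=1/m(0^{(i_0)})$ at the source and using $b(0^{(i_0)},1^{(i_0)})\big(g(0^{(i_0)})-g(1^{(i_0)})\big)=-C_g$ gives the identity $1+C_g=b(0^{(i_0)},y)\big(g(0^{(i_0)})-g(y)\big)$, which is strictly positive because $y\neq 0^{(i_0)}$ and $g$ attains its strict maximum at its pole. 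Hence $C(1+C_g)>0$, so $h$ is strictly increasing along $\N_0^{(i_0)}$ and in particular non-constant. Thus $h$ is a non-constant $\ell^2$-harmonic function, the $\ell^2$-Liouville property fails, and the contrapositive—hence the theorem—is proved. Note that the genuinely star-like structure (one edge from each ray to $X_1$, and $0^{(i_0)}$ not degenerately separating the rest of $X$ from infinity) is used precisely through the strict inequality $g(0^{(i_0)})>g(y)$; everything else is bookkeeping with Lemma~\ref{l:harmonic}, the source decomposition $\Delta v=-C\widehat{1}_{0^{(i_0)}}$, and Lemma~\ref{l:one_all}.
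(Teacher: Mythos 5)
Your proposal is correct and follows essentially the same route as the paper: argue the contrapositive, locate a ray whose Laplacian fails to be essentially self-adjoint via Theorem~\ref{t:star-like_characterization}, and combine the Green's function (square summable for every pole by Lemma~\ref{l:one_all}) with a function that is harmonic off one vertex of that ray, using the strict maximum of $g$ at its pole for non-constancy and the Hamburger summability condition for membership in $\ell^2$ along the ray. The only difference is presentational — you cancel the pole by superposing $Cg+v$ with $\Delta v=-C\,\widehat{1}_{0^{(i_0)}}$, whereas the paper places the pole at the attachment vertex $x_0\in X_1$ and regrafts $g$ along the ray — which makes harmonicity at the junction follow from linearity of $\Delta$ rather than a direct computation, but is the same construction in substance.
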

\begin{proof}
Suppose that $L_c$ is not essentially self-adjoint and $g \in \ell^2(X,m)$. 
By Theorem~\ref{t:star-like_characterization}, this means that $L_c^{(i)}$ is not essentially self-adjoint
for some $i \in I$ and by reordering we can assume that $L_c^{(1)}$
is not essentially self-adjoint where $L_c^{(1)}$ is the Laplacian on the birth-death chain
$(\N_0^{(1)}, b, m)$ which is a ray of the graph by assumption.
Let $x_0$ be the unique vertex in $X_1$ that is connected to $(\N_0^{(1)}, b, m)$ and
let $g$ be the Green's function with pole at $x_0$. By assumption 
and Lemma~\ref{l:one_all}, $g \in \ell^2(X,m)$. 

We will define a non-constant
harmonic function $v \in \ell^2(X,m)$.
We start by letting $v(x)=g(x)$ for all $x \not \in \N_0^{(1)}$.
Now, at the pole $x_0$, we make $v$ harmonic by letting
$$v(0^{(1)})= g(x_0)+ \frac{1}{b(x_0,0^{(1)})} \sum_{y \in X_1} b(x_0,y)(g(x_0)-g(y)).$$
We then extend $v$ to be harmonic on the ray by following Lemma~\ref{l:double_harmonic}
and letting
$$v((r+1)^{(1)}) = v(0^{(1)}) + C \sum_{k=0}^{r-1} \frac{1}{b(k^{(1)},(k+1)^{(1)})}$$ 
for all $r \geq 0$ where $C=b(x_0,0^{(1)})(v(0^{(1)})-g(x_0))$.
We note that both $v(0^{(1)})>0$ and $C>0$
since $g(x_0) > g(x)$ for all $x \neq x_0$.
The fact that $v$ is harmonic follows by direct calculations. 
The fact that $v \in \ell^2(X,m)$ follows from the assumptions that
$g \in \ell^2(X,m)$, that $L_c^{(1)}$ is not essentially self-adjoint, 
Corollary~\ref{c:v_square_sum} and basic arguments.
\end{proof}

\begin{remark}
As mentioned previously, the $\ell^2$-Liouville property along with $\lm_0(L)>0$ and $m(X)=\infty$
implies essential self-adjointness for general graphs. 
As $\lm_0(L)>0$ implies both transience and the fact that $g \in \ell^2(X,m)$,
see \cite{HMW21}, we see that the above
is an improvement in the case of star-like graphs with rays.
\end{remark}

\subsection*{Acknowledgments}
The authors are grateful to Matthias Keller, Aleksey Kostenko, Ognjen Milatovic, Noema Nicolussi,
and Marcel Schmidt for helpful comments.

\begin{bibdiv}
\begin{biblist}

\bib{Adr21}{article}{
   author={Adriani, Andrea},
   title={A note on comparison theorems for graphs},
   journal={J. Math. Anal. Appl.},
   volume={503},
   date={2021},
   number={1},
   pages={Paper No. 125307, 12},
   issn={0022-247X},
   review={\MR{4257987}},
   doi={10.1016/j.jmaa.2021.125307},
}

\bib{AS23}{article}{
   author={Adriani, Andrea},
   author={Setti, Alberto G.},
   title={The $ L^1 $-Liouville property on graphs},
   journal={J. Fourier Anal. Appl.},
   volume={29},
   date={2023},
   number={4},
   pages={Paper No. 44, 20},
   issn={1069-5869},
   review={\MR{4621022}},
   doi={10.1007/s00041-023-10025-3},
}

\bib{Ak65}{book}{
   author={Akhiezer, N. I.},
   title={The classical moment problem and some related questions in
   analysis},
   note={Translated by N. Kemmer},
   publisher={Hafner Publishing Co., New York},
   date={1965},
   pages={x+253},
   review={\MR{0184042}},
}

\bib{AACT23}{article}{
   author={Ann\'{e}, Colette},
   author={Ayadi, Hela},
   author={Chebbi, Yassin},
   author={Torki-Hamza, Nabila},
   title={Self-adjointness of magnetic Laplacians on triangulations},
   journal={Filomat},
   volume={37},
   date={2023},
   number={11},
   pages={3527--3550},
   issn={0354-5180},
   review={\MR{4573822}},
}

\bib{AT15}{article}{
   author={Ann\'{e}, Colette},
   author={Torki-Hamza, Nabila},
   title={The Gauss-Bonnet operator of an infinite graph},
   journal={Anal. Math. Phys.},
   volume={5},
   date={2015},
   number={2},
   pages={137--159},
   issn={1664-2368},
   review={\MR{3344097}},
   doi={10.1007/s13324-014-0090-0},
}

\bib{BBJ20}{article}{
   author={Baloudi, Hatem},
   author={Belgacem, Sayda},
   author={Jeribi, Aref},
   title={The discrete Laplacian acting on 2-forms and application},
   journal={Bull. Malays. Math. Sci. Soc.},
   volume={43},
   date={2020},
   number={2},
   pages={1025--1045},
   issn={0126-6705},
   review={\MR{4061409}},
   doi={10.1007/s40840-019-00721-z},
}

\bib{BGJ19}{article}{
   author={Baloudi, Hatem},
   author={Gol\'{e}nia, Sylvain},
   author={Jeribi, Aref},
   title={The adjacency matrix and the discrete Laplacian acting on forms},
   journal={Math. Phys. Anal. Geom.},
   volume={22},
   date={2019},
   number={1},
   pages={Paper No. 9, 27},
   issn={1385-0172},
   review={\MR{3918714}},
   doi={10.1007/s11040-019-9301-0},
}

\bib{Ber68}{book}{
   author={Berezans\cprime ki\u{\i}, Ju.~M.},
   title={Expansions in eigenfunctions of selfadjoint operators},
   series={Translated from the Russian by R. Bolstein, J. M. Danskin, J.
   Rovnyak and L. Shulman. Translations of Mathematical Monographs, Vol. 17},
   publisher={American Mathematical Society, Providence, R.I.},
   date={1968},
   pages={ix+809},
   review={\MR{0222718}},
}

\bib{BK13}{book}{
   author={Berkolaiko, Gregory},
   author={Kuchment, Peter},
   title={Introduction to quantum graphs},
   series={Mathematical Surveys and Monographs},
   volume={186},
   publisher={American Mathematical Society, Providence, RI},
   date={2013},
   pages={xiv+270},
   isbn={978-0-8218-9211-4},
   review={\MR{3013208}},
   doi={10.1090/surv/186},
}

\bib{BG15}{article}{
   author={Bonnefont, Michel},
   author={Gol\'{e}nia, Sylvain},
   title={Essential spectrum and Weyl asymptotics for discrete Laplacians},
   language={English, with English and French summaries},
   journal={Ann. Fac. Sci. Toulouse Math. (6)},
   volume={24},
   date={2015},
   number={3},
   pages={563--624},
   issn={0240-2963},
   review={\MR{3403733}},
   doi={10.5802/afst.1456},
}

\bib{BK13}{article}{
   author={Breuer, Jonathan},
   author={Keller, Matthias},
   title={Spectral analysis of certain spherically homogeneous graphs},
   journal={Oper. Matrices},
   volume={7},
   date={2013},
   number={4},
   pages={825--847},
   issn={1846-3886},
   review={\MR{3154573}},
   doi={10.7153/oam-07-46},
}

\bib{Car98}{article}{
   author={Carlson, Robert},
   title={Adjoint and self-adjoint differential operators on graphs},
   journal={Electron. J. Differential Equations},
   date={1998},
   pages={No. 6, 10},
   review={\MR{1606331}},
}

\bib{Che18}{article}{
   author={Chebbi, Yassin},
   title={The discrete Laplacian of a 2-simplicial complex},
   journal={Potential Anal.},
   volume={49},
   date={2018},
   number={2},
   pages={331--358},
   issn={0926-2601},
   review={\MR{3824965}},
   doi={10.1007/s11118-017-9659-1},
}

\bib{CTT11}{article}{
   author={Colin de Verdi{\`e}re, Yves},
   author={Torki-Hamza, Nabila},
   author={Truc, Fran{\c{c}}oise},
   title={Essential self-adjointness for combinatorial Schr\"odinger
   operators II---metrically non complete graphs},
   journal={Math. Phys. Anal. Geom.},
   volume={14},
   date={2011},
   number={1},
   pages={21--38},
   issn={1385-0172},
   review={\MR{2782792 (2012c:81064)}},
   doi={10.1007/s11040-010-9086-7},
}

\bib{CTT11b}{article}{
   author={Colin de Verdi\`ere, Yves},
   author={Torki-Hamza, Nabila},
   author={Truc, Fran\c{c}oise},
   title={Essential self-adjointness for combinatorial Schr\"{o}dinger operators
   III---Magnetic fields},
   language={English, with English and French summaries},
   journal={Ann. Fac. Sci. Toulouse Math. (6)},
   volume={20},
   date={2011},
   number={3},
   pages={599--611},
   issn={0240-2963},
   review={\MR{2894840}},
}

\bib{Dod06}{article}{
   author={Dodziuk, J{\'o}zef},
   title={Elliptic operators on infinite graphs},
   conference={
      title={Analysis, geometry and topology of elliptic operators},
   },
   book={
      publisher={World Sci. Publ., Hackensack, NJ},
   },
   date={2006},
   pages={353--368},
   review={\MR{2246774 (2008f:58019)}},
}

\bib{EK18}{article}{
   author={Eckhardt, Jonathan},
   author={Kostenko, Aleksey},
   title={The classical moment problem and generalized indefinite strings},
   journal={Integral Equations Operator Theory},
   volume={90},
   date={2018},
   number={2},
   pages={Paper No. 23, 30},
   issn={0378-620X},
   review={\MR{3784529}},
   doi={10.1007/s00020-018-2446-6},
}

\bib{EKMN18}{article}{
   author={Exner, Pavel},
   author={Kostenko, Aleksey},
   author={Malamud, Mark},
   author={Neidhardt, Hagen},
   title={Spectral theory of infinite quantum graphs},
   journal={Ann. Henri Poincar\'{e}},
   volume={19},
   date={2018},
   number={11},
   pages={3457--3510},
   issn={1424-0637},
   review={\MR{3869419}},
   doi={10.1007/s00023-018-0728-9},
}

\bib{FLW14}{article}{
   author={Frank, Rupert L.},
   author={Lenz, Daniel},
   author={Wingert, Daniel},
   title={Intrinsic metrics for non-local symmetric Dirichlet forms and
   applications to spectral theory},
   journal={J. Funct. Anal.},
   volume={266},
   date={2014},
   number={8},
   pages={4765--4808},
   issn={0022-1236},
   review={\MR{3177322}},
   doi={10.1016/j.jfa.2014.02.008},
}

\bib{GHKLW15}{article}{
   author={Georgakopoulos, Agelos},
   author={Haeseler, Sebastian},
   author={Keller, Matthias},
   author={Lenz, Daniel},
   author={Wojciechowski, Rados{\l}aw K.},
   title={Graphs of finite measure},
   journal={J. Math. Pures Appl. (9)},
   volume={103},
   date={2015},
   number={5},
   pages={1093--1131},
   issn={0021-7824},
   review={\MR{3333051}},
   doi={10.1016/j.matpur.2014.10.006},
}

\bib{GM11}{article}{
   author={Gesztesy, Fritz},
   author={Mitrea, Marius},
   title={A description of all self-adjoint extensions of the Laplacian and
   Kre\u{\i}n-type resolvent formulas on non-smooth domains},
   journal={J. Anal. Math.},
   volume={113},
   date={2011},
   pages={53--172},
   issn={0021-7670},
   review={\MR{2788354}},
   doi={10.1007/s11854-011-0002-2},
}

\bib{Gol10}{article}{
   author={Gol{\'e}nia, Sylvain},
   title={Unboundedness of adjacency matrices of locally finite graphs},
   journal={Lett. Math. Phys.},
   volume={93},
   date={2010},
   number={2},
   pages={127--140},
   issn={0377-9017},
   review={\MR{2679965}},
   doi={10.1007/s11005-010-0390-8},
}

\bib{Gol14}{article}{
   author={Gol{\'e}nia, Sylvain},
   title={Hardy inequality and asymptotic eigenvalue distribution for
   discrete Laplacians},
   journal={J. Funct. Anal.},
   volume={266},
   date={2014},
   number={5},
   pages={2662--2688},
   issn={0022-1236},
   review={\MR{3158705}},
   doi={10.1016/j.jfa.2013.10.012},
}

\bib{GS11}{article}{
   author={Gol\'{e}nia, Sylvain},
   author={Schumacher, Christoph},
   title={The problem of deficiency indices for discrete Schr\"{o}dinger
   operators on locally finite graphs},
   journal={J. Math. Phys.},
   volume={52},
   date={2011},
   number={6},
   pages={063512, 17},
   issn={0022-2488},
   review={\MR{2841768}},
   doi={10.1063/1.3596179},
}

\bib{GKS16}{article}{
   author={G\"{u}neysu, Batu},
   author={Keller, Matthias},
   author={Schmidt, Marcel},
   title={A Feynman-Kac-It\^{o} formula for magnetic Schr\"{o}dinger operators on
   graphs},
   journal={Probab. Theory Related Fields},
   volume={165},
   date={2016},
   number={1-2},
   pages={365--399},
   issn={0178-8051},
   review={\MR{3500274}},
   doi={10.1007/s00440-015-0633-9},
}

\bib{GMT14}{article}{
   author={G\"{u}neysu, Batu},
   author={Milatovic, Ognjen},
   author={Truc, Fran\c{c}oise},
   title={Generalized Schr\"{o}dinger semigroups on infinite graphs},
   journal={Potential Anal.},
   volume={41},
   date={2014},
   number={2},
   pages={517--541},
   issn={0926-2601},
   review={\MR{3232037}},
   doi={10.1007/s11118-013-9381-6},
}

\bib{Hae17}{article}{
   author={Haeseler, Sebastian},
   title={Analysis of Dirichlet forms on graphs},
  date={2017},
  eprint={arXiv:1705.06322 [math-ph]}
}  

\bib{HK11}{article}{
   author={Haeseler, Sebastian},
   author={Keller, Matthias},
   title={Generalized solutions and spectrum for Dirichlet forms on graphs},
   conference={
      title={Random walks, boundaries and spectra},
   },
   book={
      series={Progr. Probab.},
      volume={64},
      publisher={Birkh\"{a}user/Springer Basel AG, Basel},
   },
   date={2011},
   pages={181--199},
   review={\MR{3051699}},
}

\bib{HKLW12}{article}{
   author={Haeseler, Sebastian},
   author={Keller, Matthias},
   author={Lenz, Daniel},
   author={Wojciechowski, Rados{\l}aw},
   title={Laplacians on infinite graphs: Dirichlet and Neumann boundary
   conditions},
   journal={J. Spectr. Theory},
   volume={2},
   date={2012},
   number={4},
   pages={397--432},
   issn={1664-039X},
   review={\MR{2947294}},
}

\bib{Ham20a}{article}{
   author={Hamburger, Hans},
   title={\"{U}ber eine Erweiterung des Stieltjesschen Momentenproblems},
   language={German},
   journal={Math. Ann.},
   volume={82},
   date={1920},
   number={1-2},
   pages={120--164},
   issn={0025-5831},
   review={\MR{1511978}},
   doi={10.1007/BF01457982},
}

\bib{Ham20b}{article}{
   author={Hamburger, Hans},
   title={\"{U}ber eine Erweiterung des Stieltjesschen Momentenproblems},
   language={German},
   journal={Math. Ann.},
   volume={81},
   date={1920},
   number={2-4},
   pages={235--319},
   issn={0025-5831},
   review={\MR{1511966}},
   doi={10.1007/BF01564869},
}

\bib{HKM17}{article}{
   author={Hinz, Michael},
   author={Kang, Seunghyun},
   author={Masamune, Jun},
   title={Probabilistic characterizations of essential self-adjointness and
   removability of singularities},
   language={English, with English and Russian summaries},
   journal={Mat. Fiz. Komp\cprime yut. Model.},
   date={2017},
   number={3(40)},
   pages={148--162},
   issn={2587-6325},
   review={\MR{3706135}},
   doi={10.15688/mpcm.jvolsu.2017.3.11},
}

\bib{HMS23}{article}{
   author={Hinz, M.},
   author={Masamune, J.},
   author={Suzuki, K.},
   title={Removable sets and $L^p$-uniqueness on manifolds and metric
   measure spaces},
   journal={Nonlinear Anal.},
   volume={234},
   date={2023},
   pages={Paper No. 113296, 40},
   issn={0362-546X},
   review={\MR{4588517}},
   doi={10.1016/j.na.2023.113296},
}

\bib{HK14}{article}{
   author={Hua, Bobo},
   author={Keller, Matthias},
   title={Harmonic functions of general graph Laplacians},
   journal={Calc. Var. Partial Differential Equations},
   volume={51},
   date={2014},
   number={1-2},
   pages={343--362},
   issn={0944-2669},
   review={\MR{3247392}},
   doi={10.1007/s00526-013-0677-6},
}

\bib{HKLS22}{article}{
   author={Hua, Bobo},
   author={Keller, Matthias},
   author={Lenz, Daniel},
   author={Schmidt, Marcel},
   title={On $L^p$ Liouville theorems for Dirichlet forms},
   conference={
      title={Dirichlet forms and related topics},
   },
  book={
      series={Springer Proc. Math. Stat.},
      volume={394},
      publisher={Springer, Singapore},
   },
   date={[2022] \copyright 2022},
   pages={201--221},
   review={\MR{4508847}},
   doi={10.1007/978-981-19-4672-1-12},
}

\bib{HMW21}{article}{
   author={Hua, Bobo},
   author={Masamune, Jun},
   author={Wojciechowski, Rados\l aw K.},
   title={Essential self-adjointness and the $L^2$-Liouville property},
   journal={J. Fourier Anal. Appl.},
   volume={27},
   date={2021},
   number={2},
   pages={Paper No. 26, 27},
   issn={1069-5869},
   review={\MR{4231682}},
   doi={10.1007/s00041-021-09833-2},
}

\bib{Hua11}{article}{
   author={Huang, Xueping},
   title={Stochastic incompleteness for graphs and weak Omori-Yau maximum
   principle},
   journal={J. Math. Anal. Appl.},
   volume={379},
   date={2011},
   number={2},
   pages={764--782},
   issn={0022-247X},
   review={\MR{2784357 (2012c:60194)}},
   doi={10.1016/j.jmaa.2011.02.009},
}

\bib{HKMW13}{article}{
   author={Huang, Xueping},
   author={Keller, Matthias},
   author={Masamune, Jun},
   author={Wojciechowski, Rados{\l}aw K.},
   title={A note on self-adjoint extensions of the Laplacian on weighted
   graphs},
   journal={J. Funct. Anal.},
   volume={265},
   date={2013},
   number={8},
   pages={1556--1578},
   issn={0022-1236},
   review={\MR{3079229}},
   doi={10.1016/j.jfa.2013.06.004},
}

\bib{HMW19}{article}{
   author={Hua, Bobo},
   author={M\"{u}nch, Florentin},
   author={Wojciechowski, Rados\l aw K.},
   title={Coverings and the heat equation on graphs: stochastic
   incompleteness, the Feller property, and uniform transience},
   journal={Trans. Amer. Math. Soc.},
   volume={372},
   date={2019},
   number={7},
   pages={5123--5151},
   issn={0002-9947},
   review={\MR{4009457}},
   doi={10.1090/tran/7856},
}

\eat{
\bib{I}{article}{
   author={Inoue, Atsushi },
   title={Essential self-adjointness of Schr\"{o}dinger operators on the weighted integers},
}}

\bib{Jor08}{article}{
   author={Jorgensen, Palle E. T.},
   title={Essential self-adjointness of the graph-Laplacian},
   journal={J. Math. Phys.},
   volume={49},
   date={2008},
   number={7},
   pages={073510, 33},
   issn={0022-2488},
   review={\MR{2432048 (2009k:47099)}},
   doi={10.1063/1.2953684},
}

\bib{JP11}{article}{
   author={Jorgensen, Palle E. T.},
   author={Pearse, Erin P. J.},
   title={Spectral reciprocity and matrix representations of unbounded
   operators},
   journal={J. Funct. Anal.},
   volume={261},
   date={2011},
   number={3},
   pages={749--776},
   issn={0022-1236},
   review={\MR{2799579}},
   doi={10.1016/j.jfa.2011.01.016},
}

\bib{Kel15}{article}{
   author={Keller, Matthias},
   title={Intrinsic metrics on graphs: a survey},
   conference={
      title={Mathematical technology of networks},
   },
   book={
      series={Springer Proc. Math. Stat.},
      volume={128},
      publisher={Springer, Cham},
   },
   date={2015},
   pages={81--119},
   review={\MR{3375157}},
}

\bib{KL12}{article}{
   author={Keller, Matthias},
   author={Lenz, Daniel},
   title={Dirichlet forms and stochastic completeness of graphs and
   subgraphs},
   journal={J. Reine Angew. Math.},
   volume={666},
   date={2012},
   pages={189--223},
   issn={0075-4102},
   review={\MR{2920886}},
   doi={10.1515/CRELLE.2011.122},
}

\bib{KLW13}{article}{
   author={Keller, Matthias},
   author={Lenz, Daniel},
   author={Wojciechowski, Rados{\l}aw K.},
   title={Volume growth, spectrum and stochastic completeness of infinite
   graphs},
   journal={Math. Z.},
   volume={274},
   date={2013},
   number={3-4},
   pages={905--932},
   issn={0025-5874},
   review={\MR{3078252}},
   doi={10.1007/s00209-012-1101-1},
}

\bib{KLW21}{book}{
   author={Keller, Matthias},
   author={Lenz, Daniel},
   author={Wojciechowski, Rados\l aw K.},
   title={Graphs and discrete Dirichlet spaces},
   series={Grundlehren der mathematischen Wissenschaften [Fundamental
   Principles of Mathematical Sciences]},
   volume={358},
   publisher={Springer, Cham},
   date={[2021] \copyright 2021},
   pages={xv+668},
   isbn={978-3-030-81458-8},
   isbn={978-3-030-81459-5},
   review={\MR{4383783}},
   doi={10.1007/978-3-030-81459-5},
}

\bib{KM19}{article}{
   author={Keller, Matthias},
   author={M\"{u}nch, Florentin},
   title={A new discrete Hopf-Rinow theorem},
   journal={Discrete Math.},
   volume={342},
   date={2019},
   number={9},
   pages={2751--2757},
   issn={0012-365X},
   review={\MR{3975035}},
   doi={10.1016/j.disc.2019.03.014},
}

\bib{KMW}{article}{
   author={Keller, Matthias},
   author={M\"{u}nch, Florentin},
   author={Wojciechowski, Rados{\l}aw K.}
   title={Neumann semigroup, subgraph convergence, form uniqueness, stochastic completeness and the Feller property
},
  date={2023},
}  

\bib{KPP20}{article}{
   author={Keller, Matthias},
   author={Pinchover, Yehuda},
   author={Pogorzelski, Felix},
   title={Criticality theory for Schr\"{o}dinger operators on graphs},
   journal={J. Spectr. Theory},
   volume={10},
   date={2020},
   number={1},
   pages={73--114},
   issn={1664-039X},
   review={\MR{4071333}},
   doi={10.4171/JST/286},
}

\eat{
\bib{KS20}{article}{
   author={Koberstein, Jannis},
   author={Schmidt, Marcel},
   title={A note on the surjectivity of operators on vector bundles over
   discrete spaces},
   journal={Arch. Math. (Basel)},
   volume={114},
   date={2020},
   number={3},
   pages={313--329},
   issn={0003-889X},
   review={\MR{4064550}},
   doi={10.1007/s00013-019-01412-8},
}}

\bib{KMN22}{article}{
   author={Kostenko, Aleksey},
   author={Malamud, Mark},
   author={Nicolussi, Noema},
   title={A Glazman-Povzner-Wienholtz theorem on graphs},
   journal={Adv. Math.},
   volume={395},
   date={2022},
   pages={Paper No. 108158, 30},
   issn={0001-8708},
   review={\MR{4356814}},
   doi={10.1016/j.aim.2021.108158},
}	

\bib{KMNb22}{article}{
   author={Kostenko, Aleksey},
   author={Mugnolo, Delio},
   author={Nicolussi, Noema},
   title={Self-adjoint and Markovian extensions of infinite quantum graphs},
   journal={J. Lond. Math. Soc. (2)},
   volume={105},
   date={2022},
   number={2},
   pages={1262--1313},
   issn={0024-6107},
   review={\MR{4400947}},
   doi={10.1112/jlms.12539},
}
	
\bib{KN21}{article}{
   author={Kostenko, Aleksey},
   author={Nicolussi, Noema},
   title={A note on the Gaffney Laplacian on infinite metric graphs},
   journal={J. Funct. Anal.},
   volume={281},
   date={2021},
   number={10},
   pages={Paper No. 109216, 20},
   issn={0022-1236},
   review={\MR{4308056}},
   doi={10.1016/j.jfa.2021.109216},
}

\bib{KN23a}{book}{
   author={Kostenko, Aleksey},
   author={Nicolussi, Noema},
   title={Laplacians on infinite graphs},
   series={Memoirs of the European Mathematical Society},
   volume={3},
   publisher={EMS Press, Berlin},
   date={[2023] \copyright 2023},
   pages={viii+232},
   isbn={978-3-98547-025-9},
   isbn={978-3-98547-525-4},
   review={\MR{4600776}},
   doi={10.4171/mems/3},
}

\bib{KN23b}{article}{
   author={Kostenko, Aleksey},
   author={Nicolussi, Noema},
   title={Laplacians on infinite graphs: discrete vs. continuous},
   conference={
      title={European Congress of Mathematics},
   },
   book={
      publisher={EMS Press, Berlin},
   },
   date={[2023] \copyright 2023},
   pages={295--323},
   review={\MR{4615747}},
}

\bib{LSW21}{article}{
   author={Lenz, Daniel},
   author={Schmidt, Marcel},
   author={Wirth, Melchior},
   title={Uniqueness of form extensions and domination of semigroups},
   journal={J. Funct. Anal.},
   volume={280},
   date={2021},
   number={6},
   pages={108848, 27},
   issn={0022-1236},
   review={\MR{4190584}},
   doi={10.1016/j.jfa.2020.108848},
}

\bib{Mal98}{article}{
   author={Mallik, Ranjan K.},
   title={Solutions of linear difference equations with variable
   coefficients},
   journal={J. Math. Anal. Appl.},
   volume={222},
   date={1998},
   number={1},
   pages={79--91},
   issn={0022-247X},
   review={\MR{1623863}},
   doi={10.1006/jmaa.1997.5903},
}

\bib{Mas99}{article}{
   author={Masamune, Jun},
   title={Essential self-adjointness of Laplacians on Riemannian manifolds
   with fractal boundary},
   journal={Comm. Partial Differential Equations},
   volume={24},
   date={1999},
   number={3-4},
   pages={749--757},
   issn={0360-5302},
   review={\MR{1683058}},
   doi={10.1080/03605309908821442},
}

\bib{Mas05}{article}{
   author={Masamune, Jun},
   title={Analysis of the Laplacian of an incomplete manifold with almost
   polar boundary},
   journal={Rend. Mat. Appl. (7)},
   volume={25},
   date={2005},
   number={1},
   pages={109--126},
   issn={1120-7183},
   review={\MR{2142127}},
}

\bib{Mas09}{article}{
   author={Masamune, Jun},
   title={A Liouville property and its application to the Laplacian of an
   infinite graph},
   conference={
      title={Spectral analysis in geometry and number theory},
   },
   book={
      series={Contemp. Math.},
      volume={484},
      publisher={Amer. Math. Soc.},
      place={Providence, RI},
   },
   date={2009},
   pages={103--115},
   review={\MR{1500141 (2010e:05186)}},
}

\bib{Mil11}{article}{
   author={Milatovic, Ognjen},
   title={Essential self-adjointness of magnetic Schr\"odinger operators on
   locally finite graphs},
   journal={Integral Equations Operator Theory},
   volume={71},
   date={2011},
   number={1},
   pages={13--27},
   issn={0378-620X},
   review={\MR{2822425 (2012h:35353)}},
   doi={10.1007/s00020-011-1882-3},
}

\bib{Mil12}{article}{
   author={Milatovic, Ognjen},
   title={A Sears-type self-adjointness result for discrete magnetic
   Schr\"odinger operators},
   journal={J. Math. Anal. Appl.},
   volume={396},
   date={2012},
   number={2},
   pages={801--809},
   issn={0022-247X},
   review={\MR{2961272}},
   doi={10.1016/j.jmaa.2012.07.028},
}

\bib{Mil13}{article}{
   author={Milatovic, Ognjen},
   title={A spectral property of discrete Schr\"odinger operators with
   non-negative potentials},
   journal={Integral Equations Operator Theory},
   volume={76},
   date={2013},
   number={2},
   pages={285--300},
   issn={0378-620X},
   review={\MR{3054314}},
   doi={10.1007/s00020-013-2060-6},
}

\bib{MT14}{article}{
   author={Milatovic, Ognjen},
   author={Truc, Fran{\c{c}}oise},
   title={Self-adjoint extensions of discrete magnetic Schr\"odinger
   operators},
   journal={Ann. Henri Poincar\'e},
   volume={15},
   date={2014},
   number={5},
   pages={917--936},
   issn={1424-0637},
   review={\MR{3192653}},
   doi={10.1007/s00023-013-0261-9},
}

\bib{MT15}{article}{
   author={Milatovic, Ognjen},
   author={Truc, Fran\c{c}oise},
   title={Maximal accretive extensions of Schr\"{o}dinger operators on vector
   bundles over infinite graphs},
   journal={Integral Equations Operator Theory},
   volume={81},
   date={2015},
   number={1},
   pages={35--52},
   issn={0378-620X},
   review={\MR{3294392}},
   doi={10.1007/s00020-014-2196-z},
}

\bib{MO84}{article}{
   author={Mohar, Bojan},
   author={Omladi\v{c}, Matja\v{z}},
   title={The spectrum of infinite graphs with bounded vertex degrees},
   conference={
      title={Graphs, hypergraphs and applications},
      address={Eyba},
      date={1984},
   },
   book={
      series={Teubner-Texte Math.},
      volume={73},
      publisher={Teubner, Leipzig},
   },
   date={1985},
   pages={122--125},
   review={\MR{869451}},
}

\bib{Mul87}{article}{
   author={M\"{u}ller, Vladim\'{\i}r},
   title={On the spectrum of an infinite graph},
   journal={Linear Algebra Appl.},
   volume={93},
   date={1987},
   pages={187--189},
   issn={0024-3795},
   review={\MR{898555}},
   doi={10.1016/S0024-3795(87)90324-7},
}

\bib{Nor98}{book}{
   author={Norris, J. R.},
   title={Markov chains},
   series={Cambridge Series in Statistical and Probabilistic Mathematics},
   volume={2},
   note={Reprint of 1997 original},
   publisher={Cambridge University Press, Cambridge},
   date={1998},
   pages={xvi+237},
   isbn={0-521-48181-3},
   review={\MR{1600720}},
}

\bib{RS80}{book}{
   author={Reed, Michael},
   author={Simon, Barry},
   title={Methods of modern mathematical physics. I},
   edition={2},
   note={Functional analysis},
   publisher={Academic Press, Inc. [Harcourt Brace Jovanovich, Publishers],
   New York},
   date={1980},
   pages={xv+400},
   isbn={0-12-585050-6},
   review={\MR{751959 (85e:46002)}},
}

\bib{RS75}{book}{
   author={Reed, Michael},
   author={Simon, Barry},
   title={Methods of modern mathematical physics. II. Fourier analysis,
   self-adjointness},
   publisher={Academic Press [Harcourt Brace Jovanovich, Publishers], New
   York-London},
   date={1975},
   pages={xv+361},
   review={\MR{0493420 (58 \#12429b)}},
}

\bib{Sch17}{article}{
   author={Schmidt, Marcel},
   title={Global properties of Dirichlet forms on discrete spaces},
   journal={Dissertationes Math.},
   volume={522},
   date={2017},
   pages={43},
   issn={0012-3862},
   review={\MR{3649359}},
   doi={10.4064/dm738-7-2016},
}

\bib{Sch20}{article}{
   author={Schmidt, Marcel},
   title={On the existence and uniqueness of self-adjoint realizations of discrete (magnetic) Schr{\"o}dinger operators},
   conference={
      title={Analysis and geometry on graphs and manifolds},
   },
   book={
      series={London Math. Soc. Lecture Note Ser.},
      volume={461},
      publisher={Cambridge Univ. Press, Cambridge},
   },
   date={2020},
}

\bib{Tes00}{book}{
   author={Teschl, Gerald},
   title={Jacobi operators and completely integrable nonlinear lattices},
   series={Mathematical Surveys and Monographs},
   volume={72},
   publisher={American Mathematical Society, Providence, RI},
   date={2000},
   pages={xvii+351},
   isbn={0-8218-1940-2},
   review={\MR{1711536}},
   doi={10.1090/surv/072},
}

\bib{Tor10}{article}{
   author={Torki-Hamza, Nabila},
   title={Laplaciens de graphes infinis (I-graphes) m\'etriquement complets},
   language={French, with English and French summaries},
   journal={Confluentes Math.},
   volume={2},
   date={2010},
   number={3},
   pages={333--350},
   issn={1793-7442},
   review={\MR{2740044 (2012a:05223)}},
   doi={10.1142/S179374421000020X},
}

\bib{Web10}{article}{
   author={Weber, Andreas},
   title={Analysis of the physical Laplacian and the heat flow on a locally
   finite graph},
   journal={J. Math. Anal. Appl.},
   volume={370},
   date={2010},
   number={1},
   pages={146--158},
   issn={0022-247X},
   review={\MR{2651136}},
   doi={10.1016/j.jmaa.2010.04.044},
}

\bib{Wei80}{book}{
   author={Weidmann, Joachim},
   title={Linear operators in Hilbert spaces},
   series={Graduate Texts in Mathematics},
   volume={68},
   note={Translated from the German by Joseph Sz\"{u}cs},
   publisher={Springer-Verlag, New York-Berlin},
   date={1980},
   pages={xiii+402},
   isbn={0-387-90427-1},
   review={\MR{0566954}},
}

\bib{Woe00}{book}{
   author={Woess, Wolfgang},
   title={Random walks on infinite graphs and groups},
   series={Cambridge Tracts in Mathematics},
   volume={138},
   publisher={Cambridge University Press},
   place={Cambridge},
   date={2000},
   pages={xii+334},
   isbn={0-521-55292-3},
   review={\MR{1743100 (2001k:60006)}},
   doi={10.1017/CBO9780511470967},
}

\bib{Woe09}{book}{
   author={Woess, Wolfgang},
   title={Denumerable Markov chains},
   series={EMS Textbooks in Mathematics},
   note={Generating functions, boundary theory, random walks on trees},
   publisher={European Mathematical Society (EMS), Z\"urich},
   date={2009},
   pages={xviii+351},
   isbn={978-3-03719-071-5},
   review={\MR{2548569 (2011f:60142)}},
   doi={10.4171/071},
}

\bib{Woj08}{book}{
   author={Wojciechowski, Radoslaw Krzysztof},
   title={Stochastic completeness of graphs},
   note={Thesis (Ph.D.)--City University of New York},
   publisher={ProQuest LLC, Ann Arbor, MI},
   date={2008},
   pages={87},
   isbn={978-0549-58579-4},
   review={\MR{2711706}},
}

\bib{Woj17}{article}{
   author={Wojciechowski, Rados\l aw K.},
   title={The Feller property for graphs},
   journal={Trans. Amer. Math. Soc.},
   volume={369},
   date={2017},
   number={6},
   pages={4415--4431},
   issn={0002-9947},
   review={\MR{3624415}},
   doi={10.1090/tran/6901},
}

\bib{Woj21}{article}{
   author={Wojciechowski, Rados\l aw K.},
   title={Stochastic completeness of graphs: bounded Laplacians, intrinsic
   metrics, volume growth and curvature},
   journal={J. Fourier Anal. Appl.},
   volume={27},
   date={2021},
   number={2},
   pages={Paper No. 30, 45},
   issn={1069-5869},
   review={\MR{4240786}},
   doi={10.1007/s00041-021-09821-6},
}

\end{biblist}
\end{bibdiv}

\end{document}